\documentclass[a4paper]{article}
\setlength{\textwidth}{6.7in}
\setlength{\oddsidemargin}{-0.2in}
\setlength{\topmargin}{-0.52in}
\setlength{\textheight}{9.0in}
\setlength{\footskip}{0.5in}

\usepackage{amssymb}
\usepackage{amsthm}
\usepackage{latexsym}
\usepackage{amsmath}
\usepackage{xcolor}

\usepackage{multirow}
\usepackage{comment}

\includecomment{note}

\theoremstyle{definition}
\newtheorem{dfn}{Definition}[section]
\newtheorem{lem}[dfn]{Lemma}
\newtheorem{thm}[dfn]{Theorem}

\newtheorem{rem}[dfn]{Remark}
\newtheorem{prop}[dfn]{Proposition}
\newtheorem{assumption}{Assumption}

\usepackage{mathtools}
\usepackage{stmaryrd}
\usepackage{float}
\usepackage{here}
\usepackage{mathrsfs}
\usepackage{enumitem}
\usepackage{bm}
\usepackage{bbm}
\usepackage{physics}
\usepackage{diagbox}
\usepackage[dvips]{graphicx}
\usepackage{hyperref}
\hypersetup{
    linktoc=page, 
    colorlinks=true,
    linkcolor=blue, 
    citecolor=blue
}

\newcommand{\MAT}[2]{\left(\begin{array}{#1} #2 \end{array} \right)} 

\makeatletter
\@addtoreset{equation}{section}
\makeatother

\begin{document}

\title{Asymptotically uniformly most powerful tests for diffusion processes with nonsynchronous observations}
\author{Teppei Ogihara$^*$ and Futo Ueno$^*$\\
$*$
\begin{small}Graduate School of Information Science and Technology, University of Tokyo, \end{small}\\
\begin{small}7-3-1 Hongo, Bunkyo-ku, Tokyo 113--8656, Japan \end{small}\\
}
\maketitle

\noindent
{\bf Abstract.}
This paper introduces a quasi-likelihood ratio testing procedure for diffusion processes observed under nonsynchronous sampling schemes. High-frequency data, particularly in financial econometrics, are often recorded at irregular time points, challenging conventional synchronous methods for parameter estimation and hypothesis testing. To address these challenges, we develop a quasi-likelihood framework that accommodates irregular sampling while integrating adaptive estimation techniques for both drift and diffusion coefficients, thereby enhancing optimization stability and reducing computational burden. We rigorously derive the asymptotic properties of the proposed test statistic, showing that it converges to a chi-squared distribution under the null hypothesis and exhibits consistency under alternatives. Moreover, we establish that the resulting tests are asymptotically uniformly most powerful. Extensive numerical experiments corroborate the theoretical findings and demonstrate that our method outperforms existing nonparametric approaches.

~

\noindent
{\bf Keywords.} 
asymptotically uniformly most powerful tests, diffusion processes, local asymptotic normality, nonsynchronous observations, quasi-likelihood ratio tests

\section{Introduction}

In recent years, the analysis of diffusion processes using high-frequency data has gathered significant attention in various fields such as financial econometrics, engineering, and the natural sciences. In practical applications, however, the available data are often observed at nonsynchronous time points, where different components of the process are recorded at irregular and non-coinciding intervals. This nonsynchronous sampling scheme arises frequently, particularly when dealing with intra-day financial data, and poses substantial challenges to conventional statistical methods that assume synchronous observations, thereby affecting both parameter estimation and hypothesis testing procedures.

Historically, the development of asymptotically optimal tests was initiated by Wald \cite{wal43}, who demonstrated that maximum likelihood-based tests could effectively detect local alternatives. 
Choi et al.~\cite{cho-etal96} investigated the concept of asymptotically most powerful (AUMP) tests—optimal in terms of power against local alternatives—for hypothesis testing in statistical models involving nuisance parameters that satisfy local asymptotic normality. Furthermore, in the context of a multidimensional parameter space, they studied AUMPI tests, which incorporate the notion of rotational invariance in the limit of the tests.

Meanwhile, research on the estimation of parameters for diffusion processes based on discrete-time data has also been actively pursued.
Parametric estimation has been analyzed through quasi-maximum likelihood methods in Florens-Zmirou~\cite{flo89}, Yoshida~\cite{yos92, yos11}, Kessler~\cite{kes97}, and Uchida and Yoshida~\cite{uch-yos12}. In parallel, considerable research has focused on the estimation of quadratic covariation for diffusion processes observed nonsynchronous manner. 
In the case of nonsynchronous observations, it has been demonstrated that as the observation frequency increases, severe bias can arise in the estimated covariation when employing simple synchronization methods. Epps \cite{epp79} first identified this phenomenon through an analysis of US stock data, and it has come to be known as the Epps Effect. Furthermore, Hayashi and Yoshida \cite{hay-yos05} quantitatively analyzed this bias under Poisson sampling.
In this context, consistent estimators have been independently developed by Hayashi and Yoshida \cite{hay-yos05, hay-yos08, hay-yos11} as well as by Malliavin and Mancino \cite{mal-man02, mal-man09}.
Furthermore, Ogihara and Yoshida~\cite{ogi-yos14} and Ogihara~\cite{ogi23} constructed a quasi-log-likelihood function that accounts for nonsynchronous observations in parametric diffusion process models and derived maximum likelihood-type estimators for the parameters.

In nonsynchronous observation models of diffusion processes, hypothesis testing is as critical as parameter and covariance estimation. As an application in finance, for example, testing the covariance parameters of two asset price processes, or the residuals of their multi-factor models, can provide insights into their interdependence, which may then be utilized for risk management in asset allocation.
For diffusion processes with synchronous observations, hypothesis testing problems have been discussed by Gregorio and Iacus~\cite{gre-iac10, gre-iac13, gre-iac19} and Kitagawa and Uchida~\cite{kit-uch14}, among others; moreover, Nakakita and Uchida~\cite{nak-uch19} have investigated hypothesis testing for diffusion processes with noisy observations.
On the other hand, hypothesis testing for diffusion processes in nonsynchronous observation models remains almost unexplored.

In this paper, we propose quasi-likelihood ratio tests specifically designed for diffusion processes with nonsynchronous observations. Conducting hypothesis tests on the parameterized diffusion processes enables an assessment of the model structure’s validity. 
Our approach is built upon a quasi-likelihood framework that not only accommodates the irregular nature of the sampling scheme but also enhances the stability of the optimization process and reduces the computational cost through the adaptive estimation of the diffusion coefficient parameters and the drift coefficient parameters.
By deriving the asymptotic properties of the proposed test statistic, we show that it converges to a chi-squared distribution under the null hypothesis, and the test is consistent under alternatives. 
Furthermore, we extend the result of local asymptotic normality for the statistical model given by Ogihara~\cite{ogi23} to uniform local asymptotic normality. Together with previous results on asymptotic optimality of tests established by Hall and Mathiason~\cite{hal-mat90} and Choi, Hall, and Schick~\cite{cho-etal96}, we further demonstrate that the test statistics introduced in this study are AUMPI tests.
In numerical experiments, we verified that the proposed tests performance consistent with theoretical predictions under the null, alternative, and local alternative hypotheses, and we compared their performance with a test based on the nonparametric estimator in Hayashi and Yoshida~\cite{hay-yos05}.

Our contributions are threefold. First, we address a critical gap in the literature by developing a testing procedure that is robust to the challenges posed by nonsynchronous observations. Second, the integration of adaptive estimation techniques into the quasi-likelihood framework offers a fresh perspective on constructing tests that retain optimal asymptotic properties even in non-standard sampling settings. To the best of our knowledge, no study on test statistics for discretely observed diffusion process models has established the AUMPI property—even in the case of synchronous observations. Our results include the AUMPI property for synchronous observations as a special case.
Third, the comprehensive simulation analysis confirms that the proposed method outperforms existing tests, thereby providing a practical tool for researchers and practitioners working with high-frequency data.

The remainder of this paper is organized as follows. In Section \ref{main-section}, we describe the model settings and present the main results. Section \ref{sec:setting} outlines the model settings and assumptions, and introduces the quasi-likelihood functions necessary for constructing the test statistics. In Section \ref{consis-subsection}, we establish the consistency of the test statistics, while Section \ref{aump-subsection} discusses the AUMPI property of the proposed test. Section \ref{fast-calc-subsection} details the procedures for the constrained computation of the proposed test. In Section \ref{num-section}, we report numerical experiments, including tests on the diffusion coefficient parameters under both time-independent and time-dependent scenarios, as well as experiments on a drift coefficient parameter test. Finally, Section \ref{proof-section} provides the proofs of the main results.

\subsection*{Notation}

Throughout this study, we often regard a $p$-dimensional vector as a $p \times 1$ matrix. For a given set $S$, its closure is denoted by $\overline{S}$. For a matrix $A$, $A^{\top}$ denotes the transpose of a matrix $A$, and \( \|A\| \) denotes the operator norm.
The identity matrix of dimension $d$ is represented by $\mathcal{E}_{d}$. For a $k \times l$ matrix $A$, the $(i,j)$-th element is denoted by $[A]_{ij}$. For a vector $V = (v_i)_{i=1}^{k}$, ${\rm diag}(V)$ represents the $k \times k$ diagonal matrix with diagonal elements $[{\rm diag}(V)]_{ii} = v_i$. For a vector $x = (x_1, \dots, x_k)$ and a positive integer $l$, we define the differential operator as $\partial_x^l = (\frac{\partial^l}{\partial x_{i_1} \dots \partial x_{i_l}})_{i_1,\dots,i_l=1}^{k}$, where $\partial_x^0$ denotes the identity operator. The $l$-th component of a vector $x$ is denoted by $x^{(l)}$.
For a collection \( \{x_{i_{1},\ldots ,i_{D}}\}_{i_{1},\ldots ,i_{D}} \), we set  
\[
\lvert x \rvert = \sqrt{\sum_{i_{1},\ldots ,i_{D}} x_{i_{1},\ldots ,i_{D}}^{2}}.
\]

\section{Main results}\label{main-section}
\subsection{Settings and the quasi-likelihood ratio test statistics}\label{sec:setting}

Let $(\Omega, \mathcal{F}, P)$ be a probability space, and let $\mathbb{F} = \{\mathcal{F}_{t}\}_{t \geq 0}$ be a right-continuous filtration. We consider the following stochastic differential equation (SDE) parameterized by $\sigma$ and $\theta$:
\begin{equation}
    \label{SDE}
    \dd X_{t}^{(\gamma)} = \mu_{t}(\theta)\dd t + b_{t}(\sigma)\dd W_{t}, \quad X_{0}^{(\gamma)} = x_{0},
\end{equation}
which describes a two-dimensional diffusion process $\{X_{t}^{(\gamma)}\}_{t\geq 0} = \{(X_{t}^{(\gamma), 1}, X_{t}^{(\gamma), 2})^{\top}\}_{t\geq 0}$. Here, $\{W_{t}\}_{t\geq 0}$ is a two-dimensional standard $\mathbb{F}$-Wiener process, $\mu_{t}(\theta)$ and $b_{t}(\sigma)$ are $\mathbb{R}^{2}$- and $\mathbb{R}^{2} \times \mathbb{R}^{2}$-valued deterministic functions, respectively. The parameter vector is given by $\gamma = (\sigma^{\top}, \theta^{\top})^{\top}$, where $x_{0} \in \mathbb{R}^{2}$, $\sigma \in \Theta_{1} \subset \mathbb{R}^{d_{1}}$, and $\theta \in \Theta_{2} \subset \mathbb{R}^{d_{2}}$. The parameter spaces $\Theta_{1}$ and $\Theta_{2}$ are open sets containing zero, and satisfies the following Sobolev inequality: for any $l=1,2$, $u\in C^{1}(\Theta_l)$ and $q>d_l$,
\begin{eqnarray}
\sup_{x\in\Theta_l}\lvert u(x) \rvert \le C_q\left\{\left(\int_{\Theta_l}\lvert u(x) \rvert^q \dd x\right)^{1/q}
+\left(\int_{\Theta_l} \lvert \partial_xu(x) \rvert^q \dd x\right)^{1/q}\right\}.
\end{eqnarray}
Here, $C_q$ is a positive constant depending only on $q$.
The Sobolev inequality can be derived if $\Theta_l$ has a Lipschitz boundary.
See Adams and Founier~\cite{ada-fou03} for the detail.

The true parameter value is denoted by $\gamma_{0} = (\sigma_{0}^{\top}, \theta_{0}^{\top})^{\top} \in \Theta_{1} \times \Theta_{2}$. For simplicity, we write $X_{t} = X_{t}^{(\gamma_{0})} = (X_{t}^{1}, X_{t}^{2})^{\top}$. Given a positive integer $n$ and some $\epsilon_0 > 0$, we assume a positive sequence $(h_{n})_{n=1}^{\infty}$ satisfying
\[
    h_{n} \to 0, \quad n^{1-\epsilon_{0}} h_{n} \to \infty, \quad nh_{n}^{2} \to 0
\]
as $n\to\infty$.
Setting $T_{n} = nh_{n}$, we consider a setting where the process $X_t$ is observed nonsynchronously and randomly at discrete times. For $l \in \{1, 2\}$, the observation times of $X_{t}^{l}$ are represented by a strictly increasing sequence of random variables $\{S_{i}^{n, l}\}_{i=0}^{M_{l}}$ satisfying $S_{0}^{n, l} = 0$ and $S_{M_{l}}^{n, l} = T_{n}$. Here, $M_{l}$ is a positive integer-valued random variable that depends on $n$. Intuitively, $n$ represents the order of the number of observations, $h_{n}$ represents the order of the observation interval, and $M_{l}$ represents the number of observations of $X_{t}^{l}$.

For $1 \leq r_{1} \leq d_{1}$, we formulate the hypothesis testing problem for the parameter $\sigma$ as follows:
\begin{align*}
    H_0^{(\sigma)}&: \sigma^{(1)} = \cdots = \sigma^{(r_{1})} = 0, \\
    H_1^{(\sigma)}&: \text{not } H_0^{(\sigma)}.
\end{align*}
Similarly, for $1 \leq r_{2} \leq d_{2}$, we define the hypothesis testing problem for the parameter $\theta$ as follows:
\begin{align*}
    H_0^{(\theta)}&: \theta^{(1)} = \cdots = \theta^{(r_{2})} = 0, \\
    H_1^{(\theta)}&: \text{not } H_0^{(\theta)}.
\end{align*}
In this study, we aim to introduce test statistics that achieve consistency for nonsynchronous observations of diffusion processes.

For \(1 \leq i \leq M_{l}\), we define \( I_{i}^{l} = (S_{i-1}^{n, l} , S_{i}^{n, l}] \). The increment of \( X^{l}_{t} \) over the interval \( I_{i}^{l} \) is given by \( \Delta_{i}^{l} X = X^{l}_{S_{i}^{n, l}} - X^{l}_{S_{i-1}^{n, l}} \). We then define \( \Delta^{l} X = (\Delta_{i}^{l} X)_{1 \leq i \leq M_{l}} \) and  
\[
\Delta X = ((\Delta^{1}X)^{\top}, (\Delta^{2}X)^{\top})^{\top}.
\]
In other words, \( \Delta X \) is a concatenated vector consisting of the increments of \( X^{1}_{t} \) and \( X^{2}_{t} \), arranged in vector form.

Next, we define
\[
\tilde{\Sigma}_{i}^{l}(\sigma) = \int_{I_{i}^{l}}[b_{t}b_{t}^{\top}(\sigma)]_{ll}\dd t, \quad
\tilde{\Sigma}_{i, j}^{1, 2}(\sigma) = \int_{I_{i}^{1}\cap I_{j}^{2}}[b_{t}b_{t}^{\top}(\sigma)]_{12}\dd t,
\]
and construct the matrix
\[
    S_{n}(\sigma) =
    \begin{pmatrix}
    \mathrm{diag}\left((\tilde{\Sigma}_{i}^{1})_{1\leq i \leq M_{1}}\right) & \left(\tilde{\Sigma}_{i,j}^{1,2}\right)_{\substack{1 \leq i \leq M_{1}, \\1 \leq j \leq M_{2}}} \\ \\
    \left(\tilde{\Sigma}_{i,j}^{1,2}\right)^{\top}_{\substack{1 \leq i \leq M_{1}, \\1 \leq j \leq M_{2}}} & \mathrm{diag}\left((\tilde{\Sigma}_{j}^{2})_{1\leq j \leq M_{2}}\right)
    \end{pmatrix}.
\]
Using this, we define the quasi-log-likelihood function \( \mathbb{H}_{n}^{1}(\sigma) \) with respect to \( \sigma \) as follows:
\[
    \mathbb{H}_{n}^{1}(\sigma) = -\frac{1}{2}\Delta X^{\top}S_{n}^{-1}(\sigma)\Delta X - \frac{1}{2}\log\det S_{n}(\sigma).
\]
Then, the maximum likelihood-type estimator of \( \sigma \) is given by
\[
    \hat{\sigma}_{n} \in \underset{\sigma\in\overline{\Theta}_{1}} {\operatorname{argmax}}\ \mathbb{H}_{n}^{1}(\sigma).
\]

Furthermore, we define
\[
\Delta^{l} V(\theta) = \bigg(\int_{I_{i}^{l}}\mu_{t}^{l}(\theta)\dd t\bigg)_{1 \leq i \leq M_{l}}, \quad
\Delta V(\theta) = (\Delta^{1} V(\theta)^{\top}, \Delta^{2} V(\theta)^{\top})^{\top}
\]
and $\bar{X}(\theta) = \Delta X - \Delta V(\theta)$, and introduce the quasi-log-likelihood function \( \mathbb{H}_{n}^{2}(\theta) \) with respect to \( \theta \) as
\[
    \mathbb{H}_{n}^{2}(\theta) = -\frac{1}{2}\bar{X}(\theta)^{\top}S_{n}^{-1}(\hat{\sigma}_{n})\bar{X}(\theta).
\]
Thus, the maximum likelihood-type estimator of \( \theta \) is given by
\[
    \hat{\theta}_{n} \in \underset{\theta\in\overline{\Theta}_{2}} {\operatorname{argmax}}\ \mathbb{H}_{n}^{2}(\theta).
\]
The estimators \( \hat{\sigma}_n \) and \( \hat{\theta}_n \), along with the quasi-log-likelihood functions \( \mathbb{H}_{n}^{1}(\sigma) \) and \( \mathbb{H}_{n}^{2}(\theta) \), were introduced in Ogihara~\cite{ogi23} as maximum likelihood-type estimators and quasi-log-likelihood functions.

Then, we define the proposed test statistics as follows.
Define
\[
\Theta_{0, 1} = \{\sigma \in \Theta_{1} \mid \sigma^{(1)} = \cdots = \sigma^{(r_1)} = 0\},
\]
that is, \(\Theta_{0, 1}\) is the restricted parameter space under \(H_{0}^{(\sigma)}\). Moreover, let
\[
\tilde{\sigma}_{n} \in \underset{\sigma \in \overline{\Theta}_{0,1}}{\operatorname{argmax}}\, \mathbb{H}_{n}^{1}(\sigma),
\]
so that \(\tilde{\sigma}_{n}\) is the maximum likelihood-type estimator under \(H_{0}^{(\sigma)}\). The quasi-likelihood ratio test statistic \(T_{n}^{1}\) is then defined by
\begin{align*}
    T_{n}^{1} &= 2\Bigl(\mathbb{H}_{n}^{1}(\hat{\sigma}_{n}) - \mathbb{H}_{n}^{1}(\tilde{\sigma}_{n})\Bigr) \\
    &= -2\left(\log\frac{\max_{\sigma \in \overline{\Theta}_{0,1}} \exp\bigl\{\mathbb{H}_{n}^{1}(\sigma)\bigr\}}{\max_{\sigma \in \overline{\Theta}_{1}} \exp\bigl\{\mathbb{H}_{n}^{1}(\sigma)\bigr\}}\right).
\end{align*}

Similarly, define
\[
\Theta_{0, 2} = \{\theta \in \Theta_{2} \mid \theta^{(1)} = \cdots = \theta^{(r_2)} = 0\},
\]
and, in analogy with \(\tilde{\sigma}_{n}\), define the maximum likelihood-type estimator
\[
\tilde{\theta}_{n} \in \underset{\theta \in \overline{\Theta}_{0,2}}{\operatorname{argmax}}\, \mathbb{H}_{n}^{2}(\theta).
\]
The quasi-likelihood ratio test statistic \(T_{n}^{2}\) is then defined by
\[
T_{n}^{2} = 2\Bigl(\mathbb{H}_{n}^{2}(\hat{\theta}_{n}) - \mathbb{H}_{n}^{2}(\tilde{\theta}_{n})\Bigr)
= -2\left(\log\frac{\max_{\theta \in \overline{\Theta}_{0,2}} \exp\bigl\{\mathbb{H}_{n}^{2}(\theta)\bigr\}}{\max_{\theta \in \overline{\Theta}_{2}} \exp\bigl\{\mathbb{H}_{n}^{2}(\theta)\bigr\}}\right).
\]

\subsection{Consistency of the quasi-likelihood ratio tests}\label{consis-subsection}

To state the main theorems, we impose the following assumptions (A1) -- (A6).  
These conditions were introduced in Ogihara~\cite{ogi23} to derive the asymptotic properties of \( \mathbb{H}_n^1 \), \( \mathbb{H}_n^2 \), \( \hat{\sigma}_n \), and \( \hat{\theta}_n \).  

First, we define \( \Sigma_{t}(\sigma) = b_{t}b_{t}^{\top}(\sigma) \).

\begin{assumption}
There exists a constant \( c_{1} > 0 \) such that for all \( t \in [0,\infty) \) and \( \sigma \in \Theta_{1} \),  
\[
c_{1} \mathcal{E}_{2} \leq \Sigma_{t}(\sigma).
\]
For \( k \in \{0, 1, 2, 3, 4\} \), the derivatives \( \partial_{\theta}^{k}\mu_{t}(\theta) \) and \( \partial_{\sigma}^{k}b_{t}(\sigma) \) exist and are continuous in \( (t, \sigma, \theta) \in [0, \infty) \times \overline{\Theta}_{1} \times \overline{\Theta}_{2} \).  
Furthermore, for any \( \epsilon > 0 \), there exist constants \( \delta > 0 \) and \( K > 0 \) such that for all \( k \in \{0, 1, 2, 3, 4\} \), \( \sigma \in \Theta_{1} \), \( \theta \in \Theta_{2} \), and \( t, s \geq 0 \) satisfying \( \lvert t - s \rvert < \delta \),  
\[
\lvert \partial_{\theta}^{k}\mu_{t}(\theta) \rvert + \lvert \partial_{\sigma}^{k}b_{t}(\sigma) \rvert \leq K,  
\]
\[
\lvert \partial_{\theta}^{k}\mu_{t}(\theta) - \partial_{\theta}^{k}\mu_{s}(\theta) \rvert + \lvert \partial_{\sigma}^{k}b_{t}(\sigma) - \partial_{\sigma}^{k}b_{s}(\sigma) \rvert \leq \epsilon.
\]

\end{assumption}

Let \( r_{n} = \max_{i,l} \lvert I_{i}^{l} \rvert \).
\begin{assumption}
$r_{n} \overset{P}{\rightarrow} 0$ as $n \to \infty$.
\end{assumption}

We introduce the following notations:  
\[
    \rho_{t}(\sigma) = \frac{[\Sigma_{t}]_{12}}{[\Sigma_{t}]_{11}^{1/2}[\Sigma_{t}]_{22}^{1/2}}(\sigma), \quad \rho_{t,0} = \rho_{t}(\sigma_{0}),
\]
\[
    B_{l,t}(\sigma) = \frac{[\Sigma_{t}(\sigma_{0})]_{ll}}{[\Sigma_{t}(\sigma)]_{ll}}, \quad \phi_{l,t}(\theta) = [\Sigma_{t}(\sigma_{0})]_{ll}^{-1/2}(\mu_{t}^{l}(\theta) - \mu_{t}^{l}(\theta_{0})).
\]

\begin{assumption}
For any \( l \in \{1, 2\} \), \( i_{1}, i_{2} \in \{0, 1\} \), \( i_{3} \in \{0, 1, 2, 3, 4\} \), and \( k_{1}, k_{2} \in \{0, 1, 2\} \) satisfying \( k_{1} + k_{2} = 2 \),  
there exist continuous functions \( \Phi_{i_{1}, i_{2}}^{1, F}(\sigma) \), \( \Phi_{l, i_{3}}^{2}(\sigma) \) (defined on \( \overline{\Theta}_{1} \)), and \( \Phi_{i_{1}, i_{3}}^{3, k_{1}, k_{2}}(\theta) \) (defined on \( \overline{\Theta}_{2} \)) such that for any polynomial \( F(x_{1}, \ldots, x_{14}) \) of degree at most 6,  
\[
    \lim_{T\to\infty} \frac{1}{T} \int_{0}^{T} F\left((\partial_{\sigma}^{k} B_{l,t}(\sigma))_{0 \leq k \leq 4, l = 1,2}, (\partial_{\sigma}^{k'} \rho_{t}(\sigma))_{1 \leq k' \leq 4}\right) \rho_{t}(\sigma)^{i_{1}} \rho_{t,0}^{i_{2}} \dd t = \Phi_{i_{1}, i_{2}}^{1, F}(\sigma),
\]
\[
    \lim_{T\to\infty} \frac{1}{T} \int_{0}^{T} \partial_{\sigma}^{i_{3}} \log B_{l,t}(\sigma) \dd t = \Phi_{l, i_{3}}^{2}(\sigma),
\]
\[
    \lim_{T\to\infty} \frac{1}{T} \int_{0}^{T} \partial_{\theta}^{i_{3}} (\phi_{1,t}^{k_{1}} \phi_{2,t}^{k_{2}})(\theta) \rho_{t,0}^{i_{1}} \dd t = \Phi_{i_{1}, i_{3}}^{3, k_{1}, k_{2}}(\theta).
\]
\end{assumption}

Let \( \mathfrak{S} \) denote the set of all partitions \( (s_k)_{k=0}^{\infty} \) of \( [0, \infty) \) satisfying  
\[
    \sup_{k\geq 1} \lvert s_{k} - s_{k-1} \rvert \leq 1, \quad \inf_{k\geq 1} \lvert s_{k} - s_{k-1} \rvert > 0.
\]
For \( (s_k)_{k=0}^{\infty} \in \mathfrak{S} \), we define  
\[
    M_{l,k} = \#\{i;\ \sup I_{i}^{l} \in (s_{k-1}, s_{k}]\},
\]
\[
    q_{n} = \max\{k;\ s_{k} \leq nh_{n} \}.
\]
We also define the matrix \( \mathcal{E}_{(k)}^{l} \) by  
\[
    [\mathcal{E}_{(k)}^{l}]_{ij} =
    \begin{cases}
        1, & \text{if } i = j \text{ and } \sup I_{i}^{l} \in (s_{k-1}, s_{k}], \\
        0, & \text{otherwise}.
    \end{cases}
\]
Furthermore, we define  
\[
    G = 
    \left\{
        \frac{\lvert I_{i}^{1} \cap I_{j}^{2} \rvert}{\lvert I_{i}^{1}\rvert^{1/2} \lvert I_{j}^{2}\rvert^{1/2}}
    \right\}_{1 \leq i \leq M_{1}, 1 \leq j \leq M_{2}}.
\]

\begin{assumption}
\label{A4}
For \( l \in \{1,2\} \), the sequence \( \{h_{n} M_{l,q_{n}+1}\}_{n=1}^{\infty} \) is \( P \)-tight,
and there exist positive constants \( a_{0}^{1}, a_{0}^{2} \) such that for any partition \( (s_k)_{k=0}^{\infty} \in \mathfrak{S} \),  
        \[
            \max_{1 \leq k \leq q_{n}} \lvert h_{n} M_{l,k} - a_{0}^{l} (s_{k} - s_{k-1}) \rvert \overset{P}{\rightarrow} 0.
        \]
Moreover, for any positive integer \( p \), there exists a nonnegative constant \( a_{p}^{1} \) such that for any partition \( (s_k)_{k=0}^{\infty} \in \mathfrak{S} \),  
        \[
            \max_{1 \leq k \leq q_{n}} \lvert h_{n} \tr(\mathcal{E}_{(k)}^{1} (GG^{\top})^{p}) - a_{p}^{1} (s_{k} - s_{k-1}) \rvert \overset{P}{\rightarrow} 0.
        \]
\end{assumption}

Let \( \mathfrak{I}_{l} = (\lvert I_{i}^{l} \rvert^{1/2})_{i=1}^{M_{l}} \).
\begin{assumption}
For \( l \in \{1,2\} \), the sequence \( \{\lvert \mathcal{E}_{(q_{n}+1)}^{l} \mathfrak{I}_{l} \rvert\}_{n=1}^{\infty} \) is \( P \)-tight,
and for any nonnegative integer \( p \), there exist nonnegative constants \( f_{p}^{1,1}, f_{p}^{1,2}, f_{p}^{2,2} \) such that for any partition \( (s_k)_{k=0}^{\infty} \in \mathfrak{S} \),  
        \[
            \max_{1 \leq k \leq q_{n}} \lvert\mathfrak{I}_{1}^{\top} \mathcal{E}_{(k)}^{1} (GG^{\top})^{p} \mathfrak{I}_{1} - f_{p}^{1,1} (s_{k} - s_{k-1})\rvert \overset{P}{\rightarrow} 0,
        \]
        \[
            \max_{1 \leq k \leq q_{n}} \lvert\mathfrak{I}_{1}^{\top} \mathcal{E}_{(k)}^{1} (GG^{\top})^{p} G \mathfrak{I}_{2} - f_{p}^{1,2} (s_{k} - s_{k-1})\rvert \overset{P}{\rightarrow} 0,
        \] 
        \[
            \max_{1 \leq k \leq q_{n}} \lvert\mathfrak{I}_{2}^{\top} \mathcal{E}_{(k)}^{2} (G^{\top} G)^{p} \mathfrak{I}_{2} - f_{p}^{2,2} (s_{k} - s_{k-1})\rvert \overset{P}{\rightarrow} 0.
        \]
\end{assumption}

Assumptions (A3), (A4), and (A5) are primarily required to characterize the asymptotic behavior of  
$n^{-1} (\mathbb{H}_{n}^{1}(\sigma) - \mathbb{H}_{n}^{1}(\sigma_{0}))$
and  
$(nh_{n})^{-1} (\mathbb{H}_{n}^{2}(\theta) - \mathbb{H}_{n}^{2}(\theta_{0}))$
as well as their derivatives.  
For further details, see Proposition 3.8 and Proposition 3.16 in Ogihara~\cite{ogi23}.
Assumption (A3) is satisfied if $\mu_t(\theta)$ and $b_t(\sigma)$ are independent of $t$, or are periodic functions with respect to $t$ having a common period (when the period does not depend on $\sigma$ nor $\theta$).
Sufficient conditions for Assumptions (A4) and (A5) are studied in Section 2.4 of Ogihara~\cite{ogi23}.

\begin{assumption}
\( a_{1}^{1} > 0 \),
and there exist positive constants \( c_{2} \) and \( c_{3} \) such that for any \( \sigma \in \overline{\Theta}_1 \) and \( \theta \in \overline{\Theta}_2 \),  
        \[
            \limsup_{T\to\infty} \left( \frac{1}{T} \int_{0}^{T} \|\Sigma_{t}(\sigma) - \Sigma_{t}(\sigma_{0})\|^{2} \dd t \right) \geq c_{2} \lvert\sigma - \sigma_{0}\rvert^{2},
        \]
        \[
            \limsup_{T\to\infty} \left( \frac{1}{T} \int_{0}^{T} \lvert\mu_{t}(\theta) - \mu_{t}(\theta_{0})\rvert^{2} \dd t \right) \geq c_{3} \lvert\theta - \theta_{0}\rvert^{2}.
        \]
        
\end{assumption}

Assumption (A6) is {\it an identifiability condition} for the parameters \( \sigma \) and \( \theta \).

We define \( \partial_{\sigma}B_{l, t, 0} = \partial_{\sigma}B_{l, t}(\sigma_{0}) \) and  
\[
    \mathcal{A}(\rho) = \sum_{p=1}^{\infty} a_{p}^{1} \rho^{2p}, \quad \rho \in (-1,1).
\]
Furthermore, we introduce  
\begin{align*}
    \gamma_{1,t} &= \mathcal{A}(\rho_{t, 0})\left(\frac{\partial\rho_{t, 0}}{\rho_{t, 0}} - \partial_{\sigma}B_{1,t,0} - \partial_{\sigma}B_{2,t,0}\right)^2 - \partial_{\rho}\mathcal{A}(\rho_{t, 0})\frac{(\partial\rho_{t, 0})^{2}}{\rho_{t, 0}}
    \\ 
    & \quad -2\sum_{l=1}^{2}(a_{0}^{l} + \mathcal{A}(\rho_{t, 0}))(\partial_{\sigma}B_{l,t,0})^{2},
\end{align*}
and define  
\[
    \Gamma_{1} = \lim_{T\to\infty}\frac{1}{T}\int_{0}^{T} \gamma_{1,t} \dd t.
\]
According to Proposition 3.15 of Ogihara~\cite{ogi23}, under Assumptions (A1) -- (A4) and (A6), the matrix \( \Gamma_{1} \) exists and is a symmetric, positive definite matrix.

Similarly, we regard \( \partial_{\theta}\phi_{l, t, 0} = \partial_{\theta}\phi_{l, t}(\theta_{0}) \) and  
\[
    \gamma_{2,t} = \sum_{p=0}^{\infty} \rho_{t, 0}^{2p} \left\{\sum_{l=1}^{2} f_{p}^{l, l} (\partial_{\theta}\phi_{l, t, 0})^{2} - 2\rho_{t, 0} f_{p}^{1, 2} \partial_{\theta}\phi_{1, t, 0} \partial_{\theta}\phi_{2, t, 0} \right\}.
\]
Then, we define  
\[
    \Gamma_{2} = \lim_{T\to\infty}\frac{1}{T}\int_{0}^{T} \gamma_{2,t} \dd t.
\]
According to the proof of Theorem 2.3 in Ogihara~\cite{ogi23}, under Assumptions (A1) -- (A6), the matrix \( \Gamma_{2} \) exists and is a positive definite symmetric matrix.
The matrices \( \Gamma_{1} \) and \( \Gamma_{2} \) 
characterize the asymptotic variances of the maximum likelihood-type estimators of \( \sigma \) and \( \theta \).

Now we state our main results.

\begin{thm}\label{thm:sigma0}
Suppose that Assumptions (A1)--(A4) and (A6) hold. Under \(H_{0}^{(\sigma)}\),
\[
    T_{n}^{1} \overset{d}{\rightarrow} \chi^2_{r_{1}} \quad {\rm as} \quad n \to \infty.
\]
\end{thm}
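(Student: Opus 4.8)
The plan is to prove Theorem~\ref{thm:sigma0} by a quasi-likelihood analogue of Wilks' theorem, reducing $T_n^1$ to a difference of two Gaussian quadratic forms. Throughout, I reparametrize by $u = \sqrt{n}\,(\sigma - \sigma_0)$ and split the coordinates of $\sigma$ into the tested block $a = (1,\dots,r_1)$ and the nuisance block $b = (r_1+1,\dots,d_1)$. Under $H_0^{(\sigma)}$ we have $\sigma_0^{(1)} = \cdots = \sigma_0^{(r_1)} = 0$, so $\sigma_0 \in \Theta_{0,1}$ and $\sigma_0$ is an interior point of $\Theta_{0,1}$ regarded inside the affine subspace $\{\sigma_a = 0\}$; note that only (A1)--(A4) and (A6) enter, since this is entirely the diffusion-coefficient part and (A5) is reserved for the drift analysis.

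First I would import the stochastic expansion of $\mathbb{H}_n^1$ around $\sigma_0$ from Ogihara~\cite{ogi23}. By Proposition~3.8 and Proposition~3.15 there, under (A1)--(A4) and (A6) one has the quadratic approximation
\[
\mathbb{H}_n^1(\sigma_0 + n^{-1/2}u) - \mathbb{H}_n^1(\sigma_0) = u^\top \Delta_n^1 - \tfrac{1}{2} u^\top \Gamma_1 u + o_P(1)
\]
uniformly for $u$ in compact sets, where the normalized score $\Delta_n^1 = n^{-1/2}\partial_\sigma \mathbb{H}_n^1(\sigma_0)$ satisfies $\Delta_n^1 \overset{d}{\to} N(0,\Gamma_1)$, $n^{-1}\partial_\sigma^2 \mathbb{H}_n^1(\sigma_0) \overset{P}{\to} -\Gamma_1$, and $\Gamma_1$ is symmetric positive definite. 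Combined with the identifiability condition (A6), the same estimates give consistency and $\sqrt{n}$-tightness of both $\hat\sigma_n$ and $\tilde\sigma_n$, so the maximizations defining $T_n^1$ localize to an $n^{-1/2}$-neighborhood of $\sigma_0$ in which the expansion is valid.

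Next I would maximize the quadratic form in each case. Over the full space the maximizer is $\hat{u}_n = \Gamma_1^{-1}\Delta_n^1 + o_P(1)$, yielding $\mathbb{H}_n^1(\hat\sigma_n) - \mathbb{H}_n^1(\sigma_0) = \tfrac{1}{2}(\Delta_n^1)^\top \Gamma_1^{-1}\Delta_n^1 + o_P(1)$. Over $\overline{\Theta}_{0,1}$ the maximization is the same quadratic form restricted to $\{u_a = 0\}$; writing $\Delta_n^1 = ((\Delta_{n,a}^1)^\top, (\Delta_{n,b}^1)^\top)^\top$ and partitioning $\Gamma_1$ into blocks $\Gamma_{1,aa}$, $\Gamma_{1,ab}$, $\Gamma_{1,bb}$ accordingly, the restricted maximum is $\tfrac{1}{2}(\Delta_{n,b}^1)^\top \Gamma_{1,bb}^{-1}\Delta_{n,b}^1 + o_P(1)$. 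Subtracting,
\[
T_n^1 = (\Delta_n^1)^\top \Gamma_1^{-1}\Delta_n^1 - (\Delta_{n,b}^1)^\top \Gamma_{1,bb}^{-1}\Delta_{n,b}^1 + o_P(1),
\]
and the block-inverse (Schur complement) identity collapses the right-hand side to $W_n^\top \Gamma_{1,aa\cdot b}^{-1} W_n + o_P(1)$, where $W_n = \Delta_{n,a}^1 - \Gamma_{1,ab}\Gamma_{1,bb}^{-1}\Delta_{n,b}^1$ and $\Gamma_{1,aa\cdot b} = \Gamma_{1,aa} - \Gamma_{1,ab}\Gamma_{1,bb}^{-1}\Gamma_{1,ba}$.

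Finally, since $\Delta_n^1 \overset{d}{\to} N(0,\Gamma_1)$ and $W_n$ is a fixed linear image of $\Delta_n^1$, the continuous mapping theorem gives $W_n \overset{d}{\to} N(0,\Gamma_{1,aa\cdot b})$, whence $W_n^\top \Gamma_{1,aa\cdot b}^{-1} W_n \overset{d}{\to} \chi^2_{r_1}$ as the squared norm of a standard $r_1$-dimensional Gaussian. I expect the main obstacle to be the localization step: one must confirm that the argmax over the closures $\overline{\Theta}_1$ and $\overline{\Theta}_{0,1}$ is attained asymptotically at an interior point and that the remainder in the quadratic expansion is genuinely $o_P(1)$ once the maximizers are substituted. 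This rests on the uniform-in-$\sigma$ convergence of $n^{-1}(\mathbb{H}_n^1(\sigma) - \mathbb{H}_n^1(\sigma_0))$ to a nonrandom limit with a unique, well-separated maximum at $\sigma_0$ (ensured by (A6)) together with the $\sqrt{n}$-tightness of $\hat\sigma_n - \sigma_0$ and $\tilde\sigma_n - \sigma_0$; all of these follow from the estimates in Ogihara~\cite{ogi23}, so once imported the argument becomes routine bookkeeping.
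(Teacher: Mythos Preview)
Your proposal is correct and reaches $\chi^2_{r_1}$ by a legitimate route, but it is organized differently from the paper's own argument.

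The paper does not work with the LAN-type quadratic expansion around $\sigma_0$ and then maximize the approximating quadratic. Instead it Taylor-expands $\mathbb{H}_n^1(\tilde\sigma_n)$ around the unconstrained maximizer $\hat\sigma_n$ (so the linear term drops on $\{\hat\sigma_n\in\Theta_1\}$), obtaining $T_n^1=\tilde\Gamma_{1,n}^1(\tilde\sigma_n,\hat\sigma_n)\bigl\llbracket[\sqrt n(\tilde\sigma_n-\hat\sigma_n)]^{\otimes 2}\bigr\rrbracket$ with an integrated Hessian $\tilde\Gamma_{1,n}^1$ that is shown to converge to $\Gamma_1$ (Lemmas~4.1--4.2). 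It then derives the limit of $\sqrt n(\tilde\sigma_n-\hat\sigma_n)$ directly via score equations (Lemmas~4.4--4.5), landing on $Z^\top\Gamma_1^{1/2}(\Gamma_1^{-1}-\Lambda_1)\Gamma_1^{1/2}Z$ and checking that $\Gamma_1^{1/2}(\Gamma_1^{-1}-\Lambda_1)\Gamma_1^{1/2}$ is an orthogonal projection of rank $r_1$. Your Schur-complement efficient-score form $W_n^\top\Gamma_{1,aa\cdot b}^{-1}W_n$ is algebraically the same object; you simply arrive at it by subtracting two maximized quadratics rather than by computing $\sqrt n(\tilde\sigma_n-\hat\sigma_n)$. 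The advantage of the paper's route is that the remainder control is exact (integral-form Taylor with the genuine $\mathbb{H}_n^1$, not its quadratic approximation), so plugging the random maximizers in is immediate once Lemmas~4.1--4.2 are in place; the advantage of your route is brevity and a cleaner identification of the limit as the squared norm of the efficient score, at the cost of the localization/uniform-$o_P(1)$ step you flag as the main obstacle.
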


\begin{thm}\label{thm:sigma1}
Suppose that Assumptions (A1)--(A4) and (A6) hold. Under \(H_{1}^{(\sigma)}\), for any \(M > 0\),
\[
    P\bigl(T_{n}^{1} \leq M\bigr) \to 0 \quad {\rm as} \quad n \to \infty.
\]
\end{thm}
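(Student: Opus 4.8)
The plan is to prove that $T_n^1$ diverges to $+\infty$ in probability under $H_1^{(\sigma)}$, which immediately gives $P(T_n^1 \le M) \to 0$ for every fixed $M$. I would start from the decomposition
\[
T_n^1 = 2\bigl(\mathbb{H}_n^1(\hat\sigma_n) - \mathbb{H}_n^1(\sigma_0)\bigr) - 2\bigl(\mathbb{H}_n^1(\tilde\sigma_n) - \mathbb{H}_n^1(\sigma_0)\bigr),
\]
so the task reduces to controlling the two normalized quantities $n^{-1}(\mathbb{H}_n^1(\hat\sigma_n) - \mathbb{H}_n^1(\sigma_0))$ and $n^{-1}(\mathbb{H}_n^1(\tilde\sigma_n) - \mathbb{H}_n^1(\sigma_0))$. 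The input I would invoke is Proposition 3.8 of Ogihara~\cite{ogi23}, which under (A1)--(A4) yields a continuous deterministic limit $\mathbb{Y}^1$ with
\[
\sup_{\sigma\in\overline\Theta_1}\bigl| n^{-1}\bigl(\mathbb{H}_n^1(\sigma)-\mathbb{H}_n^1(\sigma_0)\bigr) - \mathbb{Y}^1(\sigma)\bigr| \overset{P}{\to} 0,
\]
where $\mathbb{Y}^1(\sigma_0)=0$ and $\mathbb{Y}^1(\sigma)<0$ for $\sigma\neq\sigma_0$, together with the consistency $\hat\sigma_n\overset{P}{\to}\sigma_0$ of the unrestricted estimator established there.

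For the first term, $\sigma_0\in\overline\Theta_1$ forces $\mathbb{H}_n^1(\hat\sigma_n)\ge \mathbb{H}_n^1(\sigma_0)$, so it is nonnegative; by $\hat\sigma_n\overset{P}{\to}\sigma_0$, continuity of $\mathbb{Y}^1$, and the uniform convergence, $n^{-1}(\mathbb{H}_n^1(\hat\sigma_n)-\mathbb{H}_n^1(\sigma_0))\overset{P}{\to}\mathbb{Y}^1(\sigma_0)=0$, i.e. this contribution is $o_P(n)$. For the second term I would use $|\sup_\sigma f_n-\sup_\sigma f|\le\sup_\sigma|f_n-f|$ together with the uniform convergence to get
\[
n^{-1}\bigl(\mathbb{H}_n^1(\tilde\sigma_n)-\mathbb{H}_n^1(\sigma_0)\bigr)=\sup_{\sigma\in\overline\Theta_{0,1}} n^{-1}\bigl(\mathbb{H}_n^1(\sigma)-\mathbb{H}_n^1(\sigma_0)\bigr)\overset{P}{\to}\sup_{\sigma\in\overline\Theta_{0,1}}\mathbb{Y}^1(\sigma).
\]
Under $H_1^{(\sigma)}$ at least one of $\sigma_0^{(1)},\dots,\sigma_0^{(r_1)}$ is nonzero, so $\sigma_0\notin\overline\Theta_{0,1}$ and $\operatorname{dist}(\sigma_0,\overline\Theta_{0,1})>0$. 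Invoking the identifiability condition (A6), which forces a quadratic bound $\mathbb{Y}^1(\sigma)\le -c\,|\sigma-\sigma_0|^2$ with $c>0$ governed by $c_2$, I would conclude
\[
\delta_0 := -\sup_{\sigma\in\overline\Theta_{0,1}}\mathbb{Y}^1(\sigma)\ge c\,\operatorname{dist}(\sigma_0,\overline\Theta_{0,1})^2>0 .
\]

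Combining the two displays gives $n^{-1}T_n^1\overset{P}{\to}2\delta_0>0$; hence for any fixed $M>0$ we have $M/n<\delta_0$ eventually, so $P(T_n^1\le M)=P(n^{-1}T_n^1\le M/n)\le P(n^{-1}T_n^1\le \delta_0)\to 0$. The two convergences themselves are essentially routine consequences of the results already in Ogihara~\cite{ogi23}. The main obstacle is the separation step: showing that $\mathbb{Y}^1$ stays strictly bounded away from $0$ over the entire restricted set $\overline\Theta_{0,1}$. When $\overline\Theta_1$ is compact this follows at once from continuity and $\mathbb{Y}^1(\sigma)<0$ for $\sigma\neq\sigma_0$; otherwise one must use the quadratic lower bound of (A6) to exclude a sequence $\sigma_k\in\overline\Theta_{0,1}$ with $\mathbb{Y}^1(\sigma_k)\uparrow 0$, transferring the $L^2$-separation of $\Sigma_t(\sigma)$ from $\Sigma_t(\sigma_0)$ into a uniform gap for the limiting contrast. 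Verifying that the limit $\mathbb{Y}^1$ arising from the nonsynchronous quasi-likelihood genuinely inherits this quadratic bound—through the matrices $G$ and the constants $a_p^1$ entering $\mathbb{H}_n^1$—is the delicate point, and it is precisely here that the exact form of Proposition 3.8 in Ogihara~\cite{ogi23} is needed.
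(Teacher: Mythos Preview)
Your proposal is correct and follows essentially the same route as the paper: both arguments hinge on the uniform convergence $n^{-1}(\mathbb{H}_n^1(\sigma)-\mathbb{H}_n^1(\sigma_0))\to\mathcal{Y}_1(\sigma)$ from Proposition~3.8 of Ogihara~\cite{ogi23} and the quadratic separation $\mathcal{Y}_1(\sigma)\le -c|\sigma-\sigma_0|^2$, which forces $n^{-1}T_n^1$ to stay bounded away from zero under $H_1^{(\sigma)}$; the only cosmetic difference is that the paper centers at $\hat\sigma_n$ (via its Lemma~\ref{lem:3}) rather than at $\sigma_0$. The ``delicate point'' you flag---that $\mathcal{Y}_1$ genuinely inherits the quadratic lower bound from (A6)---is precisely what the paper disposes of by citing Proposition~3.9 of Ogihara~\cite{ogi23} together with Remark~4 of Ogihara and Yoshida~\cite{ogi-yos14}, so no further work is needed there.
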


\begin{thm}\label{thm:theta0}
Suppose that Assumptions (A1)--(A6) hold. Under \(H_{0}^{(\theta)}\),
\[
    T_{n}^{2} \overset{d}{\rightarrow} \chi^2_{r_{2}} \quad {\rm as} \quad n \to \infty.
\]
\end{thm}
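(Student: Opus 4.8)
The strategy mirrors the proof of Theorem~\ref{thm:sigma0} for $T_n^1$, with the diffusion scaling $n^{-1/2}$ and information $\Gamma_1$ replaced by the drift scaling $(nh_n)^{-1/2}$ and information $\Gamma_2$. Under $H_0^{(\theta)}$ the true value satisfies $\theta_0\in\Theta_{0,2}$, so $\theta_0$ is an interior point of both the full parameter set and the restricted one, and the consistency of $\hat\theta_n$ and $\tilde\theta_n$ (which follows from the identifiability Assumption~(A6), as in Ogihara~\cite{ogi23}) confines both maximizers to a shrinking neighborhood of $\theta_0$. I would therefore reparametrize by $u=(nh_n)^{1/2}(\theta-\theta_0)$ and analyze the normalized quasi-log-likelihood ratio
\[
  \mathbb{Z}_n(u)=\mathbb{H}_n^2\bigl(\theta_0+(nh_n)^{-1/2}u\bigr)-\mathbb{H}_n^2(\theta_0).
\]

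First I would establish the stochastic quadratic expansion
\[
  \mathbb{Z}_n(u)=u^{\top}\Delta_n-\tfrac12\,u^{\top}\Gamma_2\,u+o_P(1),
\]
uniformly for $u$ in compact sets, where $\Delta_n=(nh_n)^{-1/2}\partial_\theta\mathbb{H}_n^2(\theta_0)$. Differentiating $\mathbb{H}_n^2(\theta)=-\tfrac12\bar X(\theta)^{\top}S_n^{-1}(\hat\sigma_n)\bar X(\theta)$ and using $\partial_\theta\bar X(\theta)=-\partial_\theta\Delta V(\theta)$, the leading part of the score is $(\partial_\theta\Delta V(\theta_0))^{\top}S_n^{-1}(\hat\sigma_n)\bar X(\theta_0)$; since $\bar X(\theta_0)=\Delta X-\Delta V(\theta_0)$ is, to leading order, the diffusion part of the increments, a central limit theorem for the weighted increments gives $\Delta_n\overset{d}{\rightarrow}N(0,\Gamma_2)$, while $-(nh_n)^{-1}\partial_\theta^2\mathbb{H}_n^2(\theta_0)\overset{P}{\rightarrow}\Gamma_2$. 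These two limits are precisely the asymptotic statements underlying Proposition 3.16 and the proof of Theorem 2.3 in Ogihara~\cite{ogi23}, on which I would build; $\Gamma_2$ is positive definite by the cited result.

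From the expansion, a standard argmax argument (strict concavity of the limiting quadratic in $u$ together with the uniform remainder) yields the unconstrained maximum
\[
  2\bigl(\mathbb{H}_n^2(\hat\theta_n)-\mathbb{H}_n^2(\theta_0)\bigr)=\Delta_n^{\top}\Gamma_2^{-1}\Delta_n+o_P(1),
\]
attained at $(nh_n)^{1/2}(\hat\theta_n-\theta_0)=\Gamma_2^{-1}\Delta_n+o_P(1)$. In the $u$-coordinates the restricted set $\overline{\Theta}_{0,2}$ corresponds to the linear subspace $V_0=\{u:u^{(1)}=\cdots=u^{(r_2)}=0\}$ (here I use $\theta_0\in\Theta_{0,2}$). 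Writing $\Delta_n=(\Delta_{n,1}^{\top},\Delta_{n,2}^{\top})^{\top}$ and
\[
  \Gamma_2=\begin{pmatrix}\Gamma_{2,11}&\Gamma_{2,12}\\ \Gamma_{2,21}&\Gamma_{2,22}\end{pmatrix}
\]
according to the split into the first $r_2$ (tested) and the remaining $d_2-r_2$ (nuisance) coordinates, maximizing the same quadratic form over $V_0$ gives
\[
  2\bigl(\mathbb{H}_n^2(\tilde\theta_n)-\mathbb{H}_n^2(\theta_0)\bigr)=\Delta_{n,2}^{\top}\Gamma_{2,22}^{-1}\Delta_{n,2}+o_P(1),
\]
so that, subtracting,
\[
  T_n^2=\Delta_n^{\top}\Gamma_2^{-1}\Delta_n-\Delta_{n,2}^{\top}\Gamma_{2,22}^{-1}\Delta_{n,2}+o_P(1).
\]

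Since $\Delta_n\overset{d}{\rightarrow}N(0,\Gamma_2)$, the right-hand side converges in distribution to $\chi^2_{r_2}$ by the classical identity that freeing $r_2$ coordinates lowers the Gaussian quadratic form $\xi^{\top}\Gamma_2^{-1}\xi$ by an independent $\chi^2_{r_2}$; together with Slutsky's theorem this gives $T_n^2\overset{d}{\rightarrow}\chi^2_{r_2}$. I expect the main obstacle to be twofold. The principal technical point is the \emph{adaptive plug-in}: $S_n^{-1}$ is evaluated at $\hat\sigma_n$ rather than $\sigma_0$, and one must show this substitution does not perturb the limiting law. This rests on the rate separation $(nh_n)^{-1/2}/n^{-1/2}=h_n^{-1/2}\to\infty$ (so $\hat\sigma_n$ is estimated faster than $\theta$) together with $nh_n^2\to0$ (controlling the discretization bias), which make the induced perturbation of $\mathbb{Z}_n$ and $\Delta_n$ uniformly $o_P(1)$. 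The second point is the localization and uniformity of the remainder---ruling out escape of $\hat\theta_n,\tilde\theta_n$ toward $\partial\overline{\Theta}_2$ and controlling the higher-order Taylor terms on compact $u$-sets---which I would handle exactly as in the companion proof of Theorem~\ref{thm:sigma0}, using Assumptions (A1)--(A5) and the tightness built into (A4)--(A5).
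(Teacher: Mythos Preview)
Your proposal is correct and reaches the same conclusion through essentially the same ingredients---consistency of $\hat\theta_n$ and $\tilde\theta_n$, the CLT $(nh_n)^{-1/2}\partial_\theta\mathbb{H}_n^2(\theta_0)\overset{d}{\to}N(0,\Gamma_2)$, and the uniform convergence $-(nh_n)^{-1}\partial_\theta^2\mathbb{H}_n^2\overset{P}{\to}\Gamma_2$---but the organization differs from the paper's. You center the quadratic expansion at $\theta_0$, invoke an argmax argument over the full space and the linear subspace $V_0$, and finish with the Schur-complement identity $\Delta_n^\top\Gamma_2^{-1}\Delta_n-\Delta_{n,2}^\top\Gamma_{2,22}^{-1}\Delta_{n,2}\sim\chi^2_{r_2}$. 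The paper instead Taylor-expands $\mathbb{H}_n^2(\tilde\theta_n)$ around $\hat\theta_n$ (not $\theta_0$), derives the asymptotic law of $\sqrt{nh_n}(\tilde\theta_n-\hat\theta_n)$ via first-order conditions (Lemmas~\ref{lem:8} and~\ref{lem:9}, the $\theta$-analogues of Lemmas~\ref{lem:4} and~\ref{lem:5}), and then identifies the limiting quadratic form $Z^\top\Gamma_2^{1/2}(\Gamma_2^{-1}-\Lambda_2)\Gamma_2^{1/2}Z$ as $\chi^2_{r_2}$ by checking that $\Gamma_2^{1/2}(\Gamma_2^{-1}-\Lambda_2)\Gamma_2^{1/2}$ is idempotent of trace $r_2$. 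Your route is slightly more streamlined and makes the Wilks structure transparent; the paper's route avoids appealing to an argmax theorem and instead obtains tightness of $\sqrt{nh_n}(\tilde\theta_n-\theta_0)$ directly from the score equation, which is exactly the localization step you flag as the second technical point. On the adaptive plug-in, the paper absorbs it by citing Proposition~3.16 and (3.59) of Ogihara~\cite{ogi23}, which already treat $\mathbb{H}_n^2$ with $S_n^{-1}(\hat\sigma_n)$, so no separate argument is needed there.
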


\begin{thm}\label{thm:theta1}
Suppose that Assumptions (A1)--(A6) hold. Under \(H_{1}^{(\theta)}\), for any \(M > 0\),
\[
    P\bigl(T_{n}^{2} \leq M\bigr) \to 0 \quad {\rm as} \quad n \to \infty.
\]
\end{thm}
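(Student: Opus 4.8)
The goal is to prove that $T_n^2$ diverges in probability under $H_1^{(\theta)}$, since $T_n^2 \overset{P}{\to}\infty$ is equivalent to $P(T_n^2\le M)\to 0$ for every $M>0$. The structure mirrors the consistency argument for $T_n^1$ (Theorem~\ref{thm:sigma1}), with the drift scaling $(nh_n)^{-1}=T_n^{-1}$ replacing $n^{-1}$. The starting point is the decomposition
\[
T_n^2 = 2\bigl(\mathbb{H}_n^2(\hat\theta_n)-\mathbb{H}_n^2(\theta_0)\bigr) - 2\bigl(\mathbb{H}_n^2(\tilde\theta_n)-\mathbb{H}_n^2(\theta_0)\bigr),
\]
and I would analyse the two centred differences on different scales. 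For the unrestricted term, $\hat\theta_n\overset{P}{\to}\theta_0$ by the consistency of the maximum likelihood-type estimator established in Ogihara~\cite{ogi23}, and a local quadratic expansion of $\mathbb{H}_n^2$ about $\theta_0$ (the same expansion that produces the $\chi^2$ limit in Theorem~\ref{thm:theta0}) shows $\mathbb{H}_n^2(\hat\theta_n)-\mathbb{H}_n^2(\theta_0)=O_P(1)$. Hence this term is asymptotically negligible relative to $T_n=nh_n\to\infty$.

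For the restricted term I would exploit a uniform law of large numbers for the normalised quasi-log-likelihood. Writing $\bar X(\theta)=\bar X(\theta_0)+D(\theta)$ with $D(\theta)=\Delta V(\theta_0)-\Delta V(\theta)$, and noting that $\bar X(\theta_0)=\Delta X-\Delta V(\theta_0)$ is the vector of martingale increments $\bigl(\int_{I_i^l}[b_t(\sigma_0)\dd W_t]_l\bigr)$, one obtains
\[
\mathbb{H}_n^2(\theta)-\mathbb{H}_n^2(\theta_0) = -D(\theta)^\top S_n^{-1}(\hat\sigma_n)\bar X(\theta_0) -\tfrac12 D(\theta)^\top S_n^{-1}(\hat\sigma_n) D(\theta).
\]
After dividing by $nh_n$, the cross term is $O_P\bigl((nh_n)^{-1/2}\bigr)$ and vanishes, while Proposition 3.16 of Ogihara~\cite{ogi23} (together with $\hat\sigma_n\overset{P}{\to}\sigma_0$) gives, uniformly on $\overline\Theta_2$,
\[
(nh_n)^{-1}\bigl(\mathbb{H}_n^2(\theta)-\mathbb{H}_n^2(\theta_0)\bigr)\overset{P}{\to} -\mathbb{Y}_2(\theta),
\]
where $\mathbb{Y}_2(\theta)\ge 0$ is the continuous limit arising from $\tfrac12\lim_{T\to\infty}T^{-1}\int_0^T$ of the quadratic form in $\mu_t(\theta)-\mu_t(\theta_0)$ weighted by the limiting nonsynchronous structure. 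Because the convergence is uniform, it transfers to the constrained maximiser:
\[
(nh_n)^{-1}\bigl(\mathbb{H}_n^2(\tilde\theta_n)-\mathbb{H}_n^2(\theta_0)\bigr)=\max_{\theta\in\overline\Theta_{0,2}}(nh_n)^{-1}\bigl(\mathbb{H}_n^2(\theta)-\mathbb{H}_n^2(\theta_0)\bigr)\overset{P}{\to}-\inf_{\theta\in\overline\Theta_{0,2}}\mathbb{Y}_2(\theta).
\]

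It then remains to show $\delta_0:=\inf_{\theta\in\overline\Theta_{0,2}}\mathbb{Y}_2(\theta)>0$. Combining the identifiability condition (A6) with the bounds on $\Sigma_t(\sigma_0)$ from (A1) (the lower bound $c_1\mathcal{E}_2\le\Sigma_t(\sigma_0)$ and the boundedness of $b_t$, which together control the limiting weighting of the quadratic form), the limit $\mathbb{Y}_2(\theta)$ is bounded below by a positive multiple of $|\theta-\theta_0|^2$. Under $H_1^{(\theta)}$ we have $\theta_0\notin\overline\Theta_{0,2}$, and every $\theta\in\overline\Theta_{0,2}$ satisfies $\theta^{(1)}=\cdots=\theta^{(r_2)}=0$, so $|\theta-\theta_0|^2\ge\sum_{j=1}^{r_2}(\theta_0^{(j)})^2>0$; hence $\delta_0>0$. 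Feeding this back into the decomposition,
\[
T_n^2 = O_P(1) + 2nh_n\,\delta_0\,(1+o_P(1))\overset{P}{\to}\infty,
\]
which yields $P(T_n^2\le M)\to 0$ for every $M>0$. The main obstacle is the passage from the pointwise to the \emph{uniform} convergence and its transfer to the data-dependent maximiser $\tilde\theta_n$: this needs the uniform-in-$\theta$ estimates of Ogihara~\cite{ogi23}, careful handling of the plug-in effect of $\hat\sigma_n$ inside $S_n^{-1}$, and compactness of $\overline\Theta_{0,2}$ so that the infimum defining $\delta_0$ is attained and strictly positive.
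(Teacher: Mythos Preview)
Your proposal is correct and follows essentially the same route as the paper: both arguments rest on the uniform convergence $(nh_n)^{-1}\bigl(\mathbb{H}_n^2(\theta)-\mathbb{H}_n^2(\theta_0)\bigr)\to \mathcal{Y}_2(\theta)$ from Proposition~3.16 of Ogihara~\cite{ogi23}, the quadratic lower bound $-\mathcal{Y}_2(\theta)\ge c\,|\theta-\theta_0|^2$ coming from (A6), and the observation that $|\theta-\theta_0|^2\ge \sum_{j\le r_2}(\theta_0^{(j)})^2>0$ on $\overline{\Theta}_{0,2}$ under $H_1^{(\theta)}$.

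The only organisational difference is where the centring is placed. You split $T_n^2$ into two pieces centred at $\theta_0$ and control the unrestricted piece $\mathbb{H}_n^2(\hat\theta_n)-\mathbb{H}_n^2(\theta_0)=O_P(1)$ via the quadratic expansion (thus invoking the $\sqrt{nh_n}$-rate of $\hat\theta_n$). The paper instead proves a small auxiliary lemma (Lemma~\ref{lem:10}) upgrading the uniform convergence to centring at $\hat\theta_n$, i.e.\ $\sup_\theta\bigl|(nh_n)^{-1}(\mathbb{H}_n^2(\theta)-\mathbb{H}_n^2(\hat\theta_n))-\mathcal{Y}_2(\theta)\bigr|\overset{P}{\to}0$, which only needs consistency of $\hat\theta_n$ and $\mathcal{Y}_2(\hat\theta_n)\to 0$. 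This lets the paper bound $P(T_n^2\le M)$ in one stroke without separating off the $\hat\theta_n$ term. Your route is slightly heavier in that it calls on the rate result, but it is equally valid; note also that for the divergence conclusion you do not actually need the precise limit $-\delta_0$ or compactness of $\overline{\Theta}_{0,2}$, only the one-sided bound $(nh_n)^{-1}(\mathbb{H}_n^2(\tilde\theta_n)-\mathbb{H}_n^2(\theta_0))\le -\delta_0+o_P(1)$, which follows directly from uniform convergence and $\tilde\theta_n\in\overline{\Theta}_{0,2}$.
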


These theorems imply that, in the asymptotic regime where the sample size is sufficiently large, consistent tests can be conducted by employing \(T_n^1\) and \(T_n^2\) in conjunction with the chi-squared distribution.
For large samples, the proposed test statistics not only maintain strict control of the Type I error rate at the predetermined significance level but also exhibit high power in detecting deviations from the null hypothesis. 

\subsection{The AUMPI property of the test statistics}\label{aump-subsection}

In this section, we introduce the concept of AUMPI as defined by Choi et al.~\cite{cho-etal96}, and we present the results demonstrating that the tests constructed from \(T_n^1\) and \(T_n^2\) are AUMPI.

Consider a general parameter space \(\Theta \subset \mathbb{R}^d\) and, for \(1 \leq r \leq d\), let the parameter \(\gamma = (\vartheta, \eta) \in \Theta\) be such that \(\vartheta \in \mathbb{R}^r\) and \(\eta \in \mathbb{R}^{d-r}\). Here, \(\eta\) represents a nuisance parameter; in the case where \(r=d\), we ignore $\eta$. For a fixed \(\vartheta_0 \in \mathbb{R}^r\), we consider the hypothesis test
\[
H_0: \vartheta = \vartheta_0 \quad \text{versus} \quad H_1: \vartheta \neq \vartheta_0.
\]
Assume that \((\vartheta_0,\eta) \in \Theta\) and let \(\mathscr{H}_\vartheta \subset \mathbb{R}^r\) and \(\mathscr{H}_\eta \subset \mathbb{R}^{d-r}\) be subsets that contain a neighborhood of 0. Moreover, let \(\{\delta_n\}_{n=1}^\infty\) be a sequence of positive numbers satisfying \(\delta_n \to 0\) as \(n \to \infty\), and define
\[
\gamma_n(h)=\gamma_n(h_\vartheta, h_\eta) = \bigl(\vartheta_0 + \delta_n h_\vartheta,\, \eta + \delta_n h_\eta\bigr),
\]
for $h=(h_\vartheta,h_\eta)\in \mathscr{H}_\vartheta \times \mathscr{H}_\eta$.
Then, for \(\alpha\in (0,1)\), we say that a sequence of tests \(\{\psi_n\}_{n=1}^\infty\) is \emph{asymptotically of level \(\alpha\) at \(\eta\)} if, for every \(h_\eta\in \mathscr{H}_\eta\),
\[
\limsup_{n\to\infty} E_{\gamma_n(0,h_\eta)}[\psi_n] \le \alpha,
\]
where $E_\gamma$ denotes the expectation with respect to $P_{\gamma,n}$.

Suppose that the family 
$\{P_{\gamma,n}\}_{\gamma \in \Theta,\, n\geq 1}$
 of probability measures satisfies local asymptotic normality at \((\vartheta_0,\eta)\); that is, for some symmetric, positive definite matrix $B$ and some random vector $S_n$,
\[
\log \frac{\dd P_{\gamma_n(h), n}}{\dd P_{(\vartheta_0,\eta), n}} - \Bigl( h^\top S_n - \frac{1}{2} h^\top B h \Bigr) \to 0,
\]
in \(P_{(\vartheta_0,\eta), n}\)-probability, for any $h\in \mathscr{H}_\vartheta \times \mathscr{H}_\eta$, and 
$S_n \stackrel{d}{\to} \mathcal{N}(0,B)$ under \(P_{(\vartheta_0,\eta), n}\). 
Decompose the matrix \(B\) into the submatrices \(B_{11}\), \(B_{12}\), and \(B_{22}\) of dimensions \(r\times r\), \(r\times (d-r)\), and \((d-r)\times (d-r)\), respectively, so that
\[
B = \MAT{cc}{B_{11} & B_{12} \\ B_{12}^\top & B_{22}}.
\]

In this setting, for any sequence of tests \(\{\psi_n\}_{n=1}^\infty\) and any \(h_\eta^0 \in \mathscr{H}_\eta\), Lemma 1 in the appendix of Choi et al.~\cite{cho-etal96} implies the existence of a limit test \(\varphi:\mathbb{R}^r\to [0,1]\) such that, for any subsequence \(\{n'\}\), we can find a further subsequence \(\{n''\}\) thereof, we have
\[
\lim_{n''\to\infty} E_{\gamma_{n''(h)}}[\psi_{n''}] = \int \varphi(z) \, \dd\Phi_r\Bigl(z - \Bigl(B_{11} - B_{12}B_{22}^{-1}B_{12}^\top\Bigr)h_\vartheta\Bigr),
\]
for any \(h=(h_\vartheta,\, h_\eta^0 - B_{22}^{-1}B_{12}^\top h_\vartheta)\) with $h_\vartheta \in \mathscr{H}_\vartheta$, where \(\Phi_r\) denotes the \(r\)-dimensional standard normal distribution.
In this context, if \(\varphi\) satisfies
$\varphi(u) = \varphi(R^\top u)$
for every \(r\times r\) orthogonal matrix \(R\) and every \(u\in \mathbb{R}^r\), then the sequence of tests \(\{\psi_n\}_{n=1}^\infty\) is said to be \emph{asymptotically invariant at \(\eta\)}.
As discussed in Choi et al.~\cite{cho-etal96}, 
such invariance is essential for establishing the asymptotic optimality of power in a multidimensional parameter space because, without it, optimal tests cannot exist.

The sequence of tests \(\{\psi_n\}_{n=1}^\infty\) is said to be \emph{asymptotically uniformly most powerful among all tests that are asymptotically invariant at \(\eta\) and of asymptotic level \(\alpha\) at \(\eta\)} (abbreviated as AUMPI\((\alpha,\eta)\)) if, for every \(h_\eta\in \mathscr{H}_\eta\) and every \(h_\vartheta\in \mathscr{H}_\vartheta \setminus \{0\}\), and for every sequence of tests \(\{\psi'_n\}_{n=1}^\infty\) that is asymptotically invariant at \(\eta\) and of asymptotic level \(\alpha\) at \(\eta\), it holds that
\[
\liminf_{n\to\infty} E_{\gamma_n((h_\vartheta,h_\eta))}[\psi_n] \ge \limsup_{n\to\infty} E_{\gamma_n((h_\vartheta,h_\eta))}[\psi'_n].
\]

\begin{rem}
    Choi et al.~\cite{cho-etal96} considered only the case where \(\delta_n = n^{-1/2}\); however, by carefully checking their proofs, their theory can be extended to the more general case where \(\delta_n \to 0\).
\end{rem}
~

Returning to our setting of the diffusion process model with nonsynchronous observations, we consider the scenario in which only one of the parameters—either \(\sigma\) or \(\theta\)—is perturbed under a local alternative.
For $\alpha\in (0,1)$ and $l\in \{1,2\}$, let
$$\psi_n^l=1_{\{T_n^l\geq \chi_{l}^2(\alpha)\}},$$
where \(\chi_{l}^{2}(\alpha)\) denotes the $1-\alpha$ quantile of \(\chi_{r_l}^{2}\).
Then by Theorems~\ref{thm:sigma0} and~\ref{thm:theta0},
$\psi_n^1$ and $\psi_n^2$ are asymptotically of level $\alpha$.
 \begin{thm}\label{aumpi-thm}
 Assume (A1)--(A6). Then, 
 \begin{enumerate}
     \item $\{\psi_n^1\}_{n=1}^\infty$ is \(\mathrm{AUMPI}(\alpha,\eta)\) with $\vartheta_0 = (\sigma_0^{(1)},\cdots, \sigma_0^{(r_1)})$, $\eta = (\sigma_0^{(r_1+1)},\cdots, \sigma_0^{(d_1)})$, and $\delta_n=n^{-1/2}$,
\item $\{\psi_n^2\}_{n=1}^\infty$ is \(\mathrm{AUMPI}(\alpha,\eta)\) with $\vartheta_0=(\theta_0^{(1)},\cdots, \theta_0^{(r_2)})$, $\eta=(\theta_0^{(r_2+1)},\cdots, \theta_0^{(d_2)})$, and $\delta_n=(nh_n)^{-1/2}$. 
 \end{enumerate}
\end{thm}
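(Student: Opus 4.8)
The plan is to deduce AUMPI from the general characterization of Choi et al.~\cite{cho-etal96}, whose only nontrivial input is a \emph{uniform} local asymptotic normality (ULAN) expansion of the true family $\{P_{\gamma,n}\}$ along the local alternatives $\gamma_n(h)$. The essential simplification is that, since $\mu_t(\theta)$ and $b_t(\sigma)$ are deterministic, the increment vector $\Delta X$ is, conditionally on the sampling times, exactly Gaussian with mean $\Delta V(\theta)$ and covariance $S_n(\sigma)$. Hence the exact conditional log-likelihood is the Gaussian quadratic form, the quasi-log-likelihoods $\mathbb{H}_n^1,\mathbb{H}_n^2$ agree with genuine log-likelihoods up to asymptotically negligible corrections, and the two testing problems decouple through the classical orthogonality of mean and covariance parameters in Gaussian families. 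I would treat the two parts in parallel, writing $(\Gamma_l,\delta_n)=(\Gamma_1,n^{-1/2})$ for $l=1$ and $(\Gamma_2,(nh_n)^{-1/2})$ for $l=2$.

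First I would establish ULAN for each sub-experiment. Expanding the exact Gaussian log-likelihood ratio $\log\big(\dd P_{\gamma_n(h),n}/\dd P_{(\vartheta_0,\eta),n}\big)$ to second order in $\delta_n h$, Proposition 3.15 (for $l=1$) and Theorem 2.3 (for $l=2$) of Ogihara~\cite{ogi23} identify the limiting quadratic form as $\tfrac12 h^\top \Gamma_l h$ and yield a central sequence $S_n^l \overset{d}{\to}\mathcal{N}(0,\Gamma_l)$, so $B=\Gamma_l$ in the Choi et al. framework. The upgrade beyond Ogihara's pointwise LAN is to make the remainder $o_P(1)$ uniform for $h$ in compact sets and along drifting nuisance sequences; I would obtain this from equicontinuity of the second and third derivatives of $n^{-1}\mathbb{H}_n^1$ and $(nh_n)^{-1}\mathbb{H}_n^2$, which the averaging limits in Assumptions (A3)--(A5) and the Sobolev inequality on $\Theta_l$ supply.

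Next I would show that the quasi-likelihood ratio statistics are asymptotically genuine likelihood ratio statistics. For $T_n^1$ the only discrepancy between $\mathbb{H}_n^1$ and the exact $\sigma$-likelihood is the omitted drift $\Delta V(\theta_0)$, whose entries are $O_P(h_n)$ and contribute $o_P(1)$ at the $n^{-1/2}$ scale; for $T_n^2$ the discrepancy is the plug-in of $\hat\sigma_n$ for $\sigma_0$, which is negligible because $\hat\sigma_n-\sigma_0=O_P(n^{-1/2})$, the drift information accrues only at the slower rate $(nh_n)^{-1/2}$, and the Gaussian mean/covariance cross-information vanishes. A standard Wilks expansion under $\gamma_n(h)$ then gives $T_n^l \overset{d}{\to}\chi^2_{r_l}(\lambda_l)$ with noncentrality $\lambda_l=h_\vartheta^\top \tilde\Gamma_l\, h_\vartheta$, where $\tilde\Gamma_l=(\Gamma_l)_{11}-(\Gamma_l)_{12}(\Gamma_l)_{22}^{-1}(\Gamma_l)_{12}^\top$ is the positive-definite efficient information. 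Since $T_n^l$ is, to leading order, a function of the squared norm of the standardized efficient score, its limit test $\varphi$ is rotation-invariant, so $\psi_n^l$ is asymptotically invariant at $\eta$; moreover its limiting power $\lim_{n\to\infty} E_{\gamma_n(h)}[\psi_n^l]=P(\chi^2_{r_l}(\lambda_l)\ge \chi_l^2(\alpha))$ attains the upper bound that Choi et al.~\cite{cho-etal96} prescribe for all tests asymptotically invariant and of level $\alpha$ at $\eta$, which together with the level statement from Theorems~\ref{thm:sigma0} and~\ref{thm:theta0} gives AUMPI$(\alpha,\eta)$.

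The main obstacle is the ULAN upgrade together with the plug-in step. Ogihara's results supply the pointwise quadratic limit and the positive definiteness of $\Gamma_1,\Gamma_2$, but promoting the expansion to hold uniformly in the local parameter—and, in the nuisance direction, along sequences in which the non-tested coordinates also drift at rate $\delta_n$—requires uniform moment and equicontinuity bounds on the derivatives of the quasi-likelihoods over shrinking neighborhoods, which must then be propagated through the constrained maximizations defining $\tilde\sigma_n$ and $\tilde\theta_n$. Rigorously establishing the $o_P(1)$ negligibility of the $\hat\sigma_n$ plug-in in the $\theta$-expansion, via the mean/covariance orthogonality and the rate separation $h_n^{1/2}\to 0$, is where the bulk of the technical care will lie.
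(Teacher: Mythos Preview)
Your proposal is correct and follows essentially the same architecture as the paper: establish uniform LAN for the true Gaussian likelihood $\mathbb{H}_n(\sigma,\theta)$, invoke the Choi et al.\ machinery, and then reconcile $T_n^l$ with the genuine likelihood ratio by controlling the drift-omission (for $l=1$) and the $\hat\sigma_n$ plug-in (for $l=2$). Your identification of the obstacles is also accurate.

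The one organizational difference worth noting is how the paper closes the gap between $T_n^l$ and the optimal test. Rather than deriving the noncentral $\chi^2_{r_l}(\lambda_l)$ limit of $T_n^l$ directly under local alternatives as you sketch, the paper introduces the exact likelihood ratio statistics $\mathrm{LR}_l$ (built from $\mathbb{H}_n$ with the nuisance block held at its true value), observes that Hall--Mathiason and Choi et al.\ already make $\mathrm{LR}_l$ AUMPI once ULAN holds, and then proves $\mathrm{LR}_l - T_n^l \overset{P}{\to} 0$ \emph{only under $H_0$}. Contiguity via Le~Cam's first lemma then transfers this convergence to every local alternative for free. This buys a modest economy: the Wilks-type expansion of $T_n^l$ (your Lemmas in Section~4.2--4.3) need only be carried out at the null, and the local-alternative behavior is inherited rather than recomputed. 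Your direct route---Wilks expansion under $\gamma_n(h)$, then match the power to the Choi et al.\ envelope---is equally valid but requires either repeating the constrained-maximizer analysis under drifting parameters or an implicit appeal to contiguity anyway.
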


\subsection{Fast computation of \(\mathbb{H}_n^l\)}\label{fast-calc-subsection}

\subsubsection{Size reduction of $S_n$ by dividing observations into blocks}\label{divide-block-subsubsection}

The computation of the quasi-likelihood ratio test statistics \(T_{n}^{1}\) and \(T_{n}^{2}\) requires maximizing the quasi-log-likelihood functions \(\mathbb{H}_{n}^{1}(\sigma)\) and \(\mathbb{H}_{n}^{2}(\theta)\) over \(\sigma\) and \(\theta\), respectively. This necessitates repeated evaluations of these functions. Since the size of \(S_n\) increases with \(n\), the computation of its inverse can be highly burdensome.

To reduce the computational cost of the matrix inversion, let \(L\) be a positive integer and consider partitioning the observation interval \([0, T_n]\) into \(L\) blocks of equal length. Define
\[
    t_k^L = \frac{kT_n}{L}, \quad k=0,1,\dots,L.
\]
For \(l \in \{1,2\}\) and \(1\leq k\leq L\), let
\[
    \Delta^{l, k} X = \Bigl(\Delta_{i}^{l} X\Bigr)_{i:\, I_i^l \subset (t_{k-1}^L, t_k^L]},
\]
and set $\Delta^{(k)} X = ((\Delta^{1, k}X)^{\top}, (\Delta^{2, k}X)^{\top})^{\top}$.
Denote the covariance matrix corresponding to \(\Delta^{(k)} X\) by \(S_{n}^{(k)}(\sigma)\) and define
\[
    \mathbb{H}_{n}^{1,(L)}(\sigma) = -\frac{1}{2}\sum_{k=1}^{L} \Delta^{(k)} X^{\top} (S_{n}^{(k)})^{-1}(\sigma) \Delta^{(k)} X - \frac{1}{2}\sum_{k=1}^{L} \log\det S_{n}^{(k)}(\sigma),
\]
and
\[
    \hat{\sigma}_n^{(L)} \in \operatorname*{argmax}_{\sigma \in \overline{\Theta}_{1}} \mathbb{H}_{n}^{1,(L)}(\sigma).
\]
Define \(\Delta^{(k)}V(\theta)\) similarly, and set \(\bar{X}^{(k)}(\theta) = \Delta^{(k)} X - \Delta^{(k)}V(\theta)\). Then the quasi-log-likelihood function for \(\theta\) is defined as
\[
    \mathbb{H}_{n}^{2,(L)}(\theta) = -\frac{1}{2}\sum_{k=1}^{L} \bar{X}^{(k)}(\theta)^{\top} (S_{n}^{(k)})^{-1}(\hat{\sigma}_{n}^{(L)}) \bar{X}^{(k)}(\theta).
\]

Since the computational cost for inverting \(S_{n}^{(k)}(\sigma)\) is proportional to the cube of the matrix size, dividing the data into \(L\) blocks roughly reduces this cost by a factor of \(L^{-3}\). However, as the inverse must be computed for each of the \(L\) blocks, the overall computational burden is reduced by approximately a factor of \(L^{-2}\). Moreover, since the quasi-log-likelihood function on each block can be computed independently, there is additional scope for parallelization.

By replacing the observation interval \([0, T_n]\) with \([t_{k-1}^L, t_k^L]\), the results of Ogihara \cite{ogi23} (e.g., Propositions 3.8 and 3.16) continue to hold for \(\mathbb{H}_n^{l,(L)}\). Therefore, the test statistics computed from \(\mathbb{H}_n^{l,(L)}\) satisfy asymptotic properties analogous to those in Theorems~\ref{thm:sigma0}--\ref{thm:theta1}.

\subsubsection{Fast computation of $\log \det S_n$}

When the diffusion coefficient \(b_t\) does not depend on \(t\), the computation of \(\log \det S_n\) can be accelerated as follows. First, define
\[
D=\begin{pmatrix}
\operatorname{diag}\bigl((\tilde{\Sigma}^1_i)_{1\leq i\leq M_1}\bigr) & 0 \\[1mm]
0 & \operatorname{diag}\bigl((\tilde{\Sigma}^2_j)_{1\leq j\leq M_2}\bigr)
\end{pmatrix}.
\]
Then, \(S_n\) can be written as
\[
S_n = D^{1/2}\begin{pmatrix}
\mathcal{E}_{M_1} & \rho G \\[1mm]
\rho G^\top & \mathcal{E}_{M_2}
\end{pmatrix}D^{1/2}.
\]
Furthermore, let \(U\) be an orthogonal matrix and \((\lambda_i)_{i=1}^{M_1+M_2}\) be the eigenvalues satisfying
\[
U^\top \begin{pmatrix} 0 & G \\[1mm] G^\top & 0 \end{pmatrix} U = \operatorname{diag}((\lambda_i)_{i=1}^{M_1+M_2}).
\]
Then it follows that
\[
U^\top \begin{pmatrix} \mathcal{E}_{M_1} & \rho G \\[1mm] \rho G^\top & \mathcal{E}_{M_2} \end{pmatrix} U = \operatorname{diag}((1+\rho \lambda_i)_{i=1}^{M_1+M_2}).
\]
Hence, we obtain
\begin{equation}
\begin{split}
\log \det S_n &= \log \det D + \log \det \begin{pmatrix} \mathcal{E}_{M_1} & \rho G \\[1mm] \rho G^\top & \mathcal{E}_{M_2} \end{pmatrix} \\
&=\sum_{i,l}\log \bigl([\Sigma]_{ll} \lvert I_i^l\rvert\bigr) + \sum_{i=1}^{M_1+M_2}\log \bigl(1+\rho \lambda_i\bigr) \\
&=\sum_{i,l}\log \lvert I_i^l\rvert + \sum_{l=1}^2 M_l\log [\Sigma]_{ll}  + \sum_{i=1}^{M_1+M_2}\log \bigl(1+\rho \lambda_i\bigr).
\end{split}
\end{equation}
Since the first term on the right-hand side does not depend on the parameters, it can be ignored in the optimization process. Moreover, because the eigenvalues \(\lambda_i\) are independent of the parameters, they need to be computed only once and need not be recalculated within the parameter optimization loop. Thus, the above representation can significantly reduce the computational cost of evaluating \(\log \det S_n\).

This acceleration technique for computing \(\log \det S_n\) can also be combined with the block-decomposition of \(S_n\) described in Section~\ref{divide-block-subsubsection}, thereby expediting the computation of \(\log \det S_n^{(k)}\). Furthermore, when \(b_t\) depends on \(t\), the method can be approximately applied by approximating \(b_t\) within each block by \(b_{t_{k-1}^L}\) or a similar representative value.

\section{Numerical Experiments}\label{num-section}

In this section, we conduct numerical experiments using simulated data from diffusion processes to evaluate hypothesis tests based on the quasi-likelihood ratio test statistics \(T_n^1\) and \(T_n^2\) (in practice, the approximated statistics described in Section~\ref{fast-calc-subsection} are employed). We consider three tests: one for \(\sigma\) when the diffusion coefficient is time-invariant, one for \(\sigma\) when the diffusion coefficient is time-varying, and one for \(\theta\). For the time-invariant case for $\sigma$, we also compare the test statistic \(T_n^1\) with a test statistic based on the Hayashi--Yoshida estimator presented in the Appendix.

\subsection{Test for \(\sigma\) with time-invariant diffusion coefficients}\label{subsec:sigma_const}

Consider the two-dimensional diffusion process \((X_{t}^{1}, X_{t}^{2})^{\top}\) governed by the stochastic differential equation
\begin{equation}
    \label{exp:sigma}
    \begin{pmatrix}
    \dd X_{t}^{1} \\
    \dd X_{t}^{2}
    \end{pmatrix}
    =
    \begin{pmatrix}
    \sigma^{(1)} & \sigma^{(3)} \\
    0 & \sigma^{(2)}
    \end{pmatrix}
    \begin{pmatrix}
    \dd W_{t}^{1} \\
    \dd W_{t}^{2}
    \end{pmatrix}, 
    \quad (X_{0}^{1}, X_{0}^{2})^{\top} = (0, 0)^{\top}.
\end{equation}
Let \(\{N_{t}^{1}\}_{t\geq 0}\) and \(\{N_{t}^{2}\}_{t\geq 0}\) be independent Poisson processes with intensities \(\lambda_{1}\) and \(\lambda_{2}\), respectively, and define
\[
    S_i^{n,l} = \inf\{t \geq 0 : N_t^l \ge i\}, \quad (l \in \{1,2\}).
\]
We simulate \(X_{t}^{1}\) and \(X_{t}^{2}\) via an Euler--Maruyama scheme based on a sufficiently fine partition, and then sample the simulated paths at the nonsynchronous time points \(\{S_{i}^{n,1}\}_{i=0}^{M_{1}}\) and \(\{S_{j}^{n,2}\}_{j=0}^{M_{2}}\) generated by the Poisson processes. 

Based on these nonsynchronous observations, we compute the test statistic \(T_{n}^{1}\) and perform the following hypothesis test for the parameter \(\sigma^{(3)}\) in model \eqref{exp:sigma}:
\[
    (\mathrm{T}1)\quad
    \begin{cases}
    H_{0} : \sigma^{(3)} = 0,\\[1mm]
    H_{1} : \text{not } H_{0}.
    \end{cases}
\]
Since \(\sigma^{(3)} = 0\) if and only if \(X_{t}^{1}\) and \(X_{t}^{2}\) are independent, under \(H_{0}\) the processes \(X_{t}^{1}\) and \(X_{t}^{2}\) are independent, and rejection of \(H_{0}\) indicates dependence between \(X_{t}^{1}\) and \(X_{t}^{2}\). We vary the value of \(\sigma^{(3)}\) in the simulations to observe whether the behavior of \(T_{n}^{1}\) aligns with the theoretical predictions.

Furthermore, we consider the test statistic \(V_{n}\) constructed based on the Hayashi--Yoshida estimator (see the Appendix).
\if0
we also conduct the following hypothesis test on the same data:
\[
    (\mathrm{T}1)'\quad
    \begin{cases}
    H_{0} : \langle X^1, X^2 \rangle_{T_n} = 0,\\[1mm]
    H_{1} : \text{not } H_{0}.
    \end{cases}
\]
\fi
Though the Hayashi--Yoshida estimator is the consistent estimator of the quadratic covariation, the quadratic covariation over \([0, T_n]\) is given by
\[
    \langle X^1, X^2 \rangle_{T_n} = \int_{0}^{T_n}[b_{t}b_{t}^{\top}(\sigma)]_{12}\dd t = \sigma^{(2)}\sigma^{(3)}T_n,
\]
under model \eqref{exp:sigma}.
Thus, provided \(\sigma^{(2)} > 0\), we have \(\sigma^{(3)} = 0\) if and only if \(\langle X^1, X^2 \rangle_{T_n} = 0\). 
Hence, 
we can compare the performance of \(T_{n}^{1}\) and \(V_{n}\).

Other parameters are fixed as given in Table~\ref{tab:sigma_const_params}. Here, ``Iteration'' refers to the number of simulation runs (sample size) performed for each value of \(\sigma^{(3)}\) to construct the empirical distribution of the test statistic.

\begin{table}[htb]
\centering
\caption{Parameter settings in Section~\ref{subsec:sigma_const}}
\begin{tabular}{cccccccccc}
    \hline
    Parameter & \(n\) & \(h_{n}\) & \(T_{n}\) & \(\lambda_{1}\) & \(\lambda_{2}\) & \(\sigma^{(1)}\) & \(\sigma^{(2)}\) & Iteration\\
    \hline
    Value & \(10^{3}\) & \(10^{-3}\) & 1 & \(2\times 10^{3}\) & \(3\times 10^{3}\) & 2 & 2 & 500\\
    \hline
\end{tabular}
\label{tab:sigma_const_params}
\end{table}

\begin{rem}
Although the drift is not considered in model \eqref{exp:sigma}, under the high-frequency asymptotic framework used in this study, the effect of drift is asymptotically negligible. Therefore, when testing hypotheses solely concerning \(\sigma\), it suffices to consider the model without the drift.
\end{rem}

\if0
\begin{figure}[htb]
    \centering
    \includegraphics[clip,scale=0.5]{chi2_plot.eps}
    \caption{Empirical distribution of \(T_{n}^{1}\) under \(\sigma^{(3)}=0\) (blue solid line) and the cumulative distribution function of \(\chi^{2}_{1}\) (orange dashed line)}
    \label{fig:emp_distribution}
\end{figure}

Figure~\ref{fig:emp_distribution} shows the empirical distribution function of \(T_{n}^{1}\) under \(\sigma^{(3)}=0\) (blue solid line) together with the cumulative distribution function of the \(\chi^{2}_{1}\) distribution (orange dashed line). Since \(H_{0}\) holds when \(\sigma^{(3)}=0\), Theorem~\ref{thm:sigma0} implies that \(T_{n}^{1} \overset{d}{\rightarrow} \chi^{2}_{1}\).
\fi

\begin{table}[htb]
    \centering
    \caption{Proportion of samples with \(T_{n}^{1} \geq \chi^{2}_{1}(\alpha)\)}
    \begin{tabular}{c|ccc}
      \hline
      \diagbox{True Value}{Threshold} & \(\chi^{2}_{1}(0.10)\) & \(\chi^{2}_{1}(0.05)\) & \(\chi^{2}_{1}(0.01)\) \\
      \hline
      \(\sigma^{(3)}=0\)    & 0.110 & 0.052 & 0.010 \\
      \(\sigma^{(3)}=0.25\) & 0.972 & 0.956 & 0.876 \\
      \(\sigma^{(3)}=0.5\)  & 1.000 & 1.000 & 1.000 \\
      \hline
    \end{tabular}
    \label{tab:const_T}
\end{table}
 
\begin{table}[htb]
    \centering
    \caption{Proportion of samples with \(V_n \in (-z(\alpha\mathbin{/}2),z(\alpha\mathbin{/}2))^c\)}
    \begin{tabular}{c|ccc}
      \hline
      \diagbox{True Value}{Threshold} & \(\pm z(0.10/2)\) & \(\pm z(0.05/2)\) & \(\pm z(0.01/2)\) \\
      \hline
      \(\sigma^{(3)}=0\)    & 0.088 & 0.036 & 0.004 \\
      \(\sigma^{(3)}=0.25\) & 0.846 & 0.748 & 0.524 \\
      \(\sigma^{(3)}=0.5\)  & 1.000 & 1.000 & 1.000 \\
      \hline
    \end{tabular}
    \label{tab:const_V}
\end{table}

Tables~\ref{tab:const_T} and \ref{tab:const_V} report the proportions of samples for which \(T_{n}^{1} \geq \chi^{2}_{1}(\alpha)\) and \(V_{n} \in (-z(\alpha\mathbin{/}2),z(\alpha\mathbin{/}2))^c\), respectively, for various values of \(\sigma^{(3)}\). Here, \
$z(\alpha\mathbin{/}2)$ denotes the \(1-\alpha\mathbin{/}2\) quantile of $\mathcal{N}(0,1)$.
Comparing the results for \(\sigma^{(3)}=0\), we observe that \(T_{n}^{1}\) converges to its asymptotic distribution faster than \(V_{n}\). For \(\sigma^{(3)}=0.5\), both \(T_{n}^{1}\) and \(V_{n}\) reject \(H_{0}\) for all samples, while for \(\sigma^{(3)}=0.25\) the rejection rate is higher for \(T_{n}^{1}\), indicating a difference in power.

\subsubsection*{Experiments under local alternatives}

As discussed in Section~\ref{aump-subsection}, the asymptotic optimality of the test is characterized by the behavior of its power under local alternatives. To compare the asymptotic performance of \(T_{n}^{1}\) and \(V_{n}\), we compute the proportion of samples that reject \(H_0\) when the true value of \(\sigma^{(3)}\) lies in a \(\sqrt{n}\)-neighborhood of 0; that is, we consider
\[
\sigma^{(3)} = 0 + \frac{u}{\sqrt{n}}, \quad u\in\{1,2,3,4\}.
\]

\begin{table}[htb]
    \centering
    \caption{Proportion of samples with \(T_{n}^{1} \geq \chi^2_{1}(\alpha)\) under local alternatives}
    \begin{tabular}{c|ccc}
      \hline
      \diagbox{True Value}{Threshold} & \(\chi^2_{1}(0.10)\) & \(\chi^2_{1}(0.05)\) & \(\chi^2_{1}(0.01)\) \\
      \hline
      \(\sigma^{(3)}=0+1/\sqrt{n}\) & 0.186 & 0.112 & 0.028 \\
      \(\sigma^{(3)}=0+2/\sqrt{n}\) & 0.322 & 0.254 & 0.102 \\
      \(\sigma^{(3)}=0+3/\sqrt{n}\) & 0.424 & 0.292 & 0.118 \\
      \(\sigma^{(3)}=0+4/\sqrt{n}\) & 0.618 & 0.486 & 0.258 \\
      \hline
    \end{tabular}
    \label{tab:local_MLE}
\end{table}

\begin{table}[htb]
    \centering
    \caption{Proportion of samples with \(V_n \in (-z(\alpha\mathbin{/}2),z(\alpha\mathbin{/}2))^c\) under local alternatives}
    \begin{tabular}{c|ccc}
      \hline
      \diagbox{True Value}{Threshold} & \(\pm z(0.10/2)\) & \(\pm z(0.05/2)\) & \(\pm z(0.01/2)\) \\
      \hline
      \(\sigma^{(3)}=0+1/\sqrt{n}\) & 0.108 & 0.054 & 0.010 \\
      \(\sigma^{(3)}=0+2/\sqrt{n}\) & 0.170 & 0.108 & 0.022 \\
      \(\sigma^{(3)}=0+3/\sqrt{n}\) & 0.238 & 0.164 & 0.050 \\
      \(\sigma^{(3)}=0+4/\sqrt{n}\) & 0.376 & 0.276 & 0.106 \\
      \hline
    \end{tabular}
    \label{tab:local_HY}
\end{table}

The results are presented in Tables~\ref{tab:local_MLE} and \ref{tab:local_HY} in the same format as Tables~\ref{tab:const_T} and \ref{tab:const_V}. Note that, as specified in Table~\ref{tab:sigma_const_params}, we set \(n=10^3\) so that \(1/\sqrt{n}\approx0.032\); in other words, the tests are conducted for true values much closer to the null than, for example, \(\sigma^{(3)}=0.25\) in Tables~\ref{tab:const_T} and \ref{tab:const_V}. In every case, \(T_{n}^{1}\) exhibits a lower type II error probability, thereby reflecting the superior asymptotic properties of the test based on \(T_{n}^{1}\).

\subsection{Test for \(\sigma\) with time-varying diffusion coefficients}\label{subsec:sigma_time_dependent}

Consider the two-dimensional diffusion process \((X_{t}^{1}, X_{t}^{2})^{\top}\) governed by
\begin{equation}
    \label{exp:sigma_variable}
    \begin{pmatrix}
    \dd X_{t}^{1} \\
    \dd X_{t}^{2}
    \end{pmatrix}
    =
    \Bigl(1+\frac{\sin(2\pi t)}{2}\Bigr)
    \begin{pmatrix}
    \sigma^{(1)} & \sigma^{(3)} \\
    0 & \sigma^{(2)}
    \end{pmatrix}
    \begin{pmatrix}
    \dd W_{t}^{1} \\
    \dd W_{t}^{2}
    \end{pmatrix}, 
    \quad (X_{0}^{1}, X_{0}^{2})^{\top} = (0,0)^{\top}.
\end{equation}
Equation~\eqref{exp:sigma_variable} is obtained by multiplying the diffusion coefficient in \eqref{exp:sigma} by the periodic, time-varying factor \(1+\sin(2\pi t)/2\). For the parameter \(\sigma^{(3)}\) in model~\eqref{exp:sigma_variable}, we test
\[
    (\mathrm{T}2)\quad
    \begin{cases}
    H_{0}: \sigma^{(3)} = 0,\\[1mm]
    H_{1}: \text{not } H_{0}.
    \end{cases}
\]
The parameter settings are given in Table~\ref{tab:sigma_time_dependent_params}.

\begin{table}[htb]
    \centering
    \caption{Parameter settings in Section~\ref{subsec:sigma_time_dependent}}
    \begin{tabular}{cccccccccc}
        \hline
        Parameter & \(n\) & \(h_{n}\) & \(T_{n}\) & \(\lambda_{1}\) & \(\lambda_{2}\) & \(\sigma^{(1)}\) & \(\sigma^{(2)}\) & Iteration\\
        \hline
        Value & \(10^{3}\) & \(10^{-3}\) & 1 & \(2\times 10^{3}\) & \(3\times 10^{3}\) & 1 & 1 & 500\\
        \hline
    \end{tabular}
    \label{tab:sigma_time_dependent_params}
\end{table}

\if0
\begin{figure}[htb]
    \centering
    \includegraphics[keepaspectratio, scale=0.35]{chi2_plot_time_dependent_diffusion2.eps}
    \caption{Empirical distribution of \(T_{n}^{1}\) under \(\sigma^{(3)}=0\) (blue solid line) and the cumulative distribution function of \(\chi^2_{1}\) (orange dashed line)}
    \label{fig:emp_distribution_time_dependent_diffusion}
\end{figure}

Figure~\ref{fig:emp_distribution_time_dependent_diffusion} shows the empirical distribution function of \(T_{n}^{1}\) under \(\sigma^{(3)}=0\) (blue solid line) along with the cumulative distribution function of the \(\chi^2_{1}\) distribution (orange dashed line). As in Section~\ref{subsec:sigma_const}, when \(\sigma^{(3)}=0\) the null hypothesis holds and, by Theorem~\ref{thm:sigma0}, \(T_{n}^{1}\overset{d}{\rightarrow}\chi^2_{1}\); this behavior is evident from Figure~\ref{fig:emp_distribution_time_dependent_diffusion}.
\fi

Table~\ref{tab:local_MLE_time_dependent_sigma} presents the proportions of samples for which \(T_{n}^{1} \ge \chi^2_{1}(\alpha)\) for both the null case and for local alternatives given by
\[
\sigma^{(3)} = 0 + \frac{u}{\sqrt{n}}, \quad u\in\{1,2,3,4\}.
\]
These results confirm that even when the diffusion coefficient varies over time, the behavior of the test statistic is consistent with the theoretical predictions.

\begin{table}[htb]
    \centering
    \caption{Proportion of samples with \(T_{n}^{1} \ge \chi^2_{1}(\alpha)\)}
    \begin{tabular}{c|ccc}
      \hline
      \diagbox{True Value}{Threshold} & \(\chi^2_{1}(0.10)\) & \(\chi^2_{1}(0.05)\) & \(\chi^2_{1}(0.01)\) \\
      \hline
      \(\sigma^{(3)}=0\) & 0.098 & 0.054 & 0.014 \\
      \(\sigma^{(3)}=0+1/\sqrt{n}\) & 0.238 & 0.180 & 0.048 \\
      \(\sigma^{(3)}=0+2/\sqrt{n}\) & 0.612 & 0.472 & 0.218 \\
      \(\sigma^{(3)}=0+3/\sqrt{n}\) & 0.900 & 0.852 & 0.638 \\
      \(\sigma^{(3)}=0+4/\sqrt{n}\) & 0.986 & 0.970 & 0.910 \\
      \hline
    \end{tabular}
    \label{tab:local_MLE_time_dependent_sigma}
\end{table}

\subsection{Test for \(\theta\)}\label{subsec:theta}

Consider the two-dimensional diffusion process \((X_{t}^{1}, X_{t}^{2})^{\top}\) governed by
\begin{equation}
    \label{exp:theta}
    \begin{pmatrix}
    \dd X_{t}^{1} \\
    \dd X_{t}^{2}
    \end{pmatrix}
    =
    \begin{pmatrix}
    \bigl(1+\theta^{(1)}\bigr)\sin t \\
    \bigl(-1+\theta^{(2)}\bigr)\sin t
    \end{pmatrix}
    \dd t
    +
    \begin{pmatrix}
    \sigma^{(1)} & \sigma^{(3)} \\
    0 & \sigma^{(2)}
    \end{pmatrix}
    \begin{pmatrix}
    \dd W_{t}^{1} \\
    \dd W_{t}^{2}
    \end{pmatrix}, 
    \quad (X_{0}^{1}, X_{0}^{2})^{\top} = (0, 0)^{\top}.
\end{equation}
We test the parameter \(\theta^{(1)}\) in model~\eqref{exp:theta} via the hypothesis
\[
    (\mathrm{T}3)\quad
    \begin{cases}
    H_{0}: \theta^{(1)} = 0,\\[1mm]
    H_{1}: \text{not } H_{0}.
    \end{cases}
\]
The parameter settings are given in Table~\ref{tab:theta_params}.

\begin{table}[htb]
    \centering
    \caption{Parameter settings in Section~\ref{subsec:theta}}
    \begin{tabular}{cccccccccccc}
        \hline
        Parameter & \(n\) & \(h_{n}\) & \(T_{n}\) & \(\lambda_{1}\) & \(\lambda_{2}\) & \(\sigma^{(1)}\) & \(\sigma^{(2)}\) & \(\sigma^{(3)}\) & \(\theta^{(2)}\) & Iteration\\
        \hline
        Value & \(2\times10^{6}\) & \(5\times10^{-4}\) & \(10^{3}\) & \(1\times10^{3}\) & \(1.5\times10^{3}\) & 1 & 1 & 0.5 & 0 & 500\\
        \hline
    \end{tabular}
    \label{tab:theta_params}
\end{table}

\if0
\begin{figure}[htb]
    \centering
    \includegraphics[keepaspectratio, scale=0.5]{chi2_plot_drift2.eps}
    \caption{Empirical distribution of \(T_{n}^{2}\) under \(\theta^{(1)}=0\) (blue solid line) and the cumulative distribution function of \(\chi^2_{1}\) (orange dashed line)}
    \label{fig:emp_distribution_drift}
\end{figure}

Figure~\ref{fig:emp_distribution_drift} plots the empirical distribution function of \(T_{n}^{2}\) under \(\theta^{(1)}=0\) (blue solid line) along with the cumulative distribution function of the \(\chi^2_{1}\) distribution (orange dashed line). Since \(H_0\) holds when \(\theta^{(1)}=0\), Theorem~\ref{thm:theta0} implies that \(T_{n}^{2}\overset{d}{\rightarrow}\chi^2_{1}\), as is evident in Figure~\ref{fig:emp_distribution_drift}.
\fi

\begin{table}[htb]
    \centering
    \caption{Proportion of samples with \(T_{n}^{2} \ge \chi^2_{1}(\alpha)\) under local alternatives}
    \begin{tabular}{c|ccc}
      \hline
      \diagbox{True Value}{Threshold} & \(\chi^2_{1}(0.10)\) & \(\chi^2_{1}(0.05)\) & \(\chi^2_{1}(0.01)\) \\
      \hline
      \(\theta^{(1)}=0\) & 0.090 & 0.054 & 0.018 \\
      \(\theta^{(1)}=0+1/\sqrt{nh_{n}}\) & 0.192 & 0.108 & 0.034 \\
      \(\theta^{(1)}=0+2/\sqrt{nh_{n}}\) & 0.406 & 0.310 & 0.122 \\
      \(\theta^{(1)}=0+3/\sqrt{nh_{n}}\) & 0.680 & 0.576 & 0.340 \\
      \(\theta^{(1)}=0+4/\sqrt{nh_{n}}\) & 0.876 & 0.812 & 0.614 \\
      \hline
    \end{tabular}
    \label{tab:local_MLE_theta}
\end{table}

Table~\ref{tab:local_MLE_theta} shows the proportions of samples for which \(T_{n}^{2} \ge \chi^2_{1}(\alpha)\) for the cases \(\theta^{(1)}=0\) and for local alternatives given by
\[
\theta^{(1)} = 0 + \frac{u}{\sqrt{nh_{n}}}, \quad u\in\{1,2,3,4\}.
\]
These results demonstrate that the behavior of the test statistic for the drift parameters is consistent with theoretical predictions.

\section{Proofs of the Main Results}\label{proof-section}

In this section, we present the proofs of the main theorems.

For a matrix \(A\), we denote $A^{\otimes 2} = AA^{\top}$, and for matrices \(A\) and \(B\) of appropriate sizes, we write $A\llbracket B \rrbracket = \tr(AB^{\top})$.

\subsection{Preparations}

Define
\begin{align*}
y_{1,t}(\sigma) &= -\frac{1}{2}\mathcal{A}(\rho_{t})\sum_{l=1}^{2}B_{l,t}^{2} 
+ \mathcal{A}(\rho_{t})\frac{B_{1,t}B_{2,t}\rho_{t,0}}{\rho_{t}} \\
&\quad + \sum_{l=1}^{2}a_{0}^{l}\left(\frac{1}{2} - \frac{1}{2}B_{l,t}^{2} + \log B_{l,t}\right)
+ \int_{\rho_{t,0}}^{\rho_{t}}\frac{\mathcal{A}(\rho)}{\rho}\dd \rho, \\
y_{2,t}(\theta) &= \sum_{p=0}^{\infty}\Biggl\{
-\frac{1}{2}\sum_{l=1}^{2}f_{p}^{ll}\rho_{t,0}^{2p}\phi_{l,t}^{2}(\theta)
+ f_{p}^{12}\rho_{t,0}^{2p+1}\phi_{1,t}(\theta)\phi_{2,t}(\theta)
\Biggr\}.
\end{align*}
Then, define
\[
\mathcal{Y}_{1}(\sigma) = \lim_{T\to\infty}\frac{1}{T}\int_{0}^{T}y_{1,t}(\sigma)\dd t, \quad
\mathcal{Y}_{2}(\theta) = \lim_{T\to\infty}\frac{1}{T}\int_{0}^{T}y_{2,t}(\theta)\dd t.
\]
Propositions 3.8 and 3.16 in Ogihara~\cite{ogi23} show that these quantities correspond to the limits of 
$n^{-1}(\mathbb{H}_{n}^{1}(\sigma) - \mathbb{H}_{n}^{1}(\sigma_{0}))$ and $(nh_n)^{-1}(\mathbb{H}_{n}^{2}(\theta) - \mathbb{H}_{n}^{2}(\theta_{0}))$, respectively, and we obtain
\[
-\partial^{2}_{\sigma}\mathcal{Y}_{1}(\sigma_{0}) = \Gamma_{1}, \quad
-\partial^{2}_{\theta}\mathcal{Y}_{2}(\theta_{0}) = \Gamma_{2}.
\]

When \(1\leq r_1 < d_1\), we partition \(\Gamma_{1}\) by writing
\[
\Gamma_{1} =
\begin{pmatrix*}[c]
\Gamma_{1,1} & \Gamma_{1,2} \\[3mm]
\Gamma_{1,2}^{\top} & \Gamma_{1,3}
\end{pmatrix*},
\]
where \(\Gamma_{1,1}\in\mathbb{R}^{r_{1}\times r_{1}}\), \(\Gamma_{1,2}\in\mathbb{R}^{r_{1}\times (d_{1}-r_{1})}\), and \(\Gamma_{1,3}\in\mathbb{R}^{(d_{1}-r_{1})\times (d_{1}-r_{1})}\). Similarly, we partition \(\Gamma_{2}\) by writing
\[
\Gamma_{2} =
\begin{pmatrix*}[c]
\Gamma_{2,1} & \Gamma_{2,2} \\[3mm]
\Gamma_{2,2}^{\top} & \Gamma_{2,3}
\end{pmatrix*},
\]
with \(\Gamma_{2,1}\in\mathbb{R}^{r_{2}\times r_{2}}\), \(\Gamma_{2,2}\in\mathbb{R}^{r_{2}\times (d_{2}-r_{2})}\), and \(\Gamma_{2,3}\in\mathbb{R}^{(d_{2}-r_{2})\times (d_{2}-r_{2})}\).

Moreover, define
\[
\Lambda_{1} =
\begin{pmatrix}
O & O \\[2mm]
O & (\Gamma_{1,3})^{-1}
\end{pmatrix}, \quad
\Lambda_{2} =
\begin{pmatrix}
O & O \\[2mm]
O & (\Gamma_{2,3})^{-1}
\end{pmatrix}.
\]

\subsection{Proof of Theorems~\ref{thm:sigma0} and \ref{thm:sigma1}}

To prove Theorems~\ref{thm:sigma0} and \ref{thm:sigma1}, we establish four lemmas. The first two lemmas concern the convergence of \(\partial_\sigma^2 \mathbb{H}_n^l\).

For \(x,y\in\Theta_{1}\), define
\[
\tilde{\Gamma}_{1, n}^l(x, y) = -(l+1)\int_{0}^{1} \frac{(1-u)^l}{n}\,\partial^{2}_{\sigma} \mathbb{H}_{n}^{1}\Bigl(y + u(x - y)\Bigr)\dd u
\]
for $l\in \{0,1\}$.
Then, similarly to \(\Gamma_{1}\), we decompose \(\tilde{\Gamma}_{1, n}^l(x, y)\) as
\[
\tilde{\Gamma}_{1, n}^l(x, y) = 
\begin{pmatrix*}[c]
\tilde{\Gamma}_{1,1,n}^l & \tilde{\Gamma}_{1,2,n}^l \\[3mm]
(\tilde{\Gamma}_{1,2,n}^l)^{\top} & \tilde{\Gamma}_{1,3,n}^l
\end{pmatrix*}(x, y).
\]

In the process of applying Taylor's theorem to both $\mathbb{H}_n^1$ and its derivative $\partial_\sigma \mathbb{H}_n^1$, the term $\tilde{\Gamma}_{1,n}^l$ arises. Therefore, to analyze the asymptotic behavior of $T_n^1$, we first establish the convergence in probability of $\tilde{\Gamma}_{1,n}^l$.

\begin{lem}\label{lem:1}
Assume conditions (A1)--(A4) and (A6). Then, under \(H_{0}^{(\sigma)}\), 
$\tilde{\Gamma}_{1,n}^1(\tilde{\sigma}_n,\hat{\sigma}_n)
\overset{P}{\rightarrow} \Gamma_{1}$ as $n\to\infty$.

\begin{proof}
Define
$\sigma_{u,n} = \hat{\sigma}_{n} + u\bigl(\tilde{\sigma}_n - \hat{\sigma}_{n}\bigr)$.
Proposition~3.8 in Ogihara~\cite{ogi23} yields
\begin{equation}\label{lem:1-eq1}
\begin{split}
&\Biggl|\,2\int_{0}^{1} \frac{1-u}{n}\,\partial^{2}_{\sigma} \mathbb{H}_{n}^{1}(\sigma_{u,n})\dd u
- 2\int_{0}^{1} (1-u)\,\partial^{2}_{\sigma} \mathcal{Y}_{1}(\sigma_{u,n})\dd u \,\Biggr|\\[1mm]
&\quad \leq 2\int_{0}^{1} (1-u)\dd u\cdot \sup_{\sigma\in\Theta_{1}}\Biggl|\frac{1}{n}\,\partial^{2}_{\sigma} \mathbb{H}_{n}^{1}(\sigma)
- \partial^{2}_{\sigma} \mathcal{Y}_{1}(\sigma)\Biggr|\\[1mm]
&\quad \overset{P}{\rightarrow} 0.
\end{split}
\end{equation}
Furthermore,
\begin{equation}\label{lem:1-eq2}
\begin{split}
&\Biggl|\, -2\int_{0}^{1} (1-u)\,\partial^{2}_{\sigma} \mathcal{Y}_{1}(\sigma_{u,n})\dd u - \Gamma_{1}\,\Biggr| \\
&\quad = \Biggl|\, 2\int_{0}^{1} (1-u)\,\partial^{2}_{\sigma} \mathcal{Y}_{1}(\sigma_{u,n})\dd u
- 2\int_{0}^{1} (1-u)\,\partial^{2}_{\sigma} \mathcal{Y}_{1}(\sigma_{0})\dd u \,\Biggr|\\[1mm]
&\quad = \Biggl|\, 2\int_{0}^{1} (1-u)
\left(\int_{0}^{1}\partial^{3}_{\sigma}\mathcal{Y}_{1}\Bigl(\sigma_{0} + v(\sigma_{u,n} - \sigma_{0})\Bigr)\dd v\right)
\cdot (\sigma_{u,n}-\sigma_{0})\dd u \,\Biggr|.
\end{split}
\end{equation}
By (A1), the \(k\)th derivative of \(\mathcal{Y}_{1}(\sigma)\) (for \(k\in\{0,1,2,3,4\}\)) is bounded; that is, there exists a constant \(K'>0\) such that
\begin{equation}\label{lem:1-eq3}
\left|\partial^{k}_{\sigma}\mathcal{Y}_{1}\Bigl(\sigma_{0} + v(\sigma_{u,n} - \sigma_{0})\Bigr)\right|\leq K', \quad (k\in \{0,1,2,3,4\}).
\end{equation}
Moreover, by the consistency of \(\hat{\sigma}_{n}\) and of \(\tilde{\sigma}_{n}\) under \(H_{0}^{(\sigma)}\) (see Ogihara~\cite{ogi23}, Section 3.2), we have
\begin{align*}
\Biggl|\, 2\int_{0}^{1} (1-u)&\left(\int_{0}^{1}\partial^{3}_{\sigma}\mathcal{Y}_{1}\Bigl(\sigma_{0} + v(\sigma_{u,n} - \sigma_{0})\Bigr)\dd v\right)(\sigma_{u,n}-\sigma_{0})\dd u \,\Biggr|\\[1mm]
&\leq 2K'\int_{0}^{1} (1-u)\Bigl(u\bigl|\tilde{\sigma}_n - \sigma_{0}\bigr| + (1-u)\bigl|\hat{\sigma}_n - \sigma_{0}\bigr|\Bigr)\dd u\\[1mm]
&\leq K'\Bigl(\bigl|\tilde{\sigma}_n - \sigma_{0}\bigr| + \bigl|\hat{\sigma}_n - \sigma_{0}\bigr|\Bigr)
\overset{P}{\rightarrow} 0.
\end{align*}
Together with \eqref{lem:1-eq1} and \eqref{lem:1-eq2}, it follows that
\[
\Biggl|\,-2\int_{0}^{1} \frac{1-u}{n}\,\partial^{2}_{\sigma} \mathbb{H}_{n}^{1}(\sigma_{u,n})\dd u - \Gamma_{1}\,\Biggr|
\overset{P}{\rightarrow} 0.
\]
\end{proof}
\end{lem}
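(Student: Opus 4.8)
The plan is to reduce the statement to two ingredients already supplied by Ogihara~\cite{ogi23}: the uniform convergence of the normalized second derivative of $\mathbb{H}_n^1$ to $\partial_\sigma^2\mathcal{Y}_1$, and the consistency of the two estimators. Writing $\sigma_{u,n} = \hat\sigma_n + u(\tilde\sigma_n - \hat\sigma_n)$, the quantity of interest (the case $l=1$) is
\[
\tilde\Gamma_{1,n}^1(\tilde\sigma_n,\hat\sigma_n) = -2\int_0^1 \frac{1-u}{n}\,\partial_\sigma^2\mathbb{H}_n^1(\sigma_{u,n})\,\dd u.
\]
Since $2\int_0^1(1-u)\,\dd u = 1$ and $\Gamma_1 = -\partial_\sigma^2\mathcal{Y}_1(\sigma_0)$, the target can be rewritten as $\Gamma_1 = -2\int_0^1(1-u)\,\partial_\sigma^2\mathcal{Y}_1(\sigma_0)\,\dd u$. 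Thus it suffices to carry out two replacements: first pass from $n^{-1}\partial_\sigma^2\mathbb{H}_n^1$ to its deterministic limit $\partial_\sigma^2\mathcal{Y}_1$, and then replace the random argument $\sigma_{u,n}$ by $\sigma_0$.

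First I would perform the derivative-replacement step. By Proposition~3.8 of Ogihara~\cite{ogi23}, $\sup_{\sigma\in\Theta_1}|n^{-1}\partial_\sigma^2\mathbb{H}_n^1(\sigma) - \partial_\sigma^2\mathcal{Y}_1(\sigma)| \overset{P}{\rightarrow} 0$. Bounding the integrand uniformly by this supremum and using $2\int_0^1(1-u)\,\dd u = 1$, the difference between $\tilde\Gamma_{1,n}^1(\tilde\sigma_n,\hat\sigma_n)$ and $-2\int_0^1(1-u)\,\partial_\sigma^2\mathcal{Y}_1(\sigma_{u,n})\,\dd u$ tends to zero in probability. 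This argument succeeds irrespective of the random path $u\mapsto\sigma_{u,n}$ precisely because the convergence is uniform in $\sigma$.

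Next I would handle the argument-replacement step via a Taylor expansion of $\partial_\sigma^2\mathcal{Y}_1$ about $\sigma_0$, producing a remainder involving $\partial_\sigma^3\mathcal{Y}_1$ evaluated along the segment from $\sigma_0$ to $\sigma_{u,n}$ multiplied by $(\sigma_{u,n}-\sigma_0)$. Assumption (A1) ensures $\partial_\sigma^3\mathcal{Y}_1$ is bounded, so this remainder is controlled by a constant multiple of $|\sigma_{u,n}-\sigma_0|$. Because $\sigma_{u,n}$ is a convex combination, $|\sigma_{u,n}-\sigma_0| \le u|\tilde\sigma_n-\sigma_0| + (1-u)|\hat\sigma_n-\sigma_0|$, and integrating against $(1-u)$ leaves a bound of the form $C(|\tilde\sigma_n-\sigma_0| + |\hat\sigma_n-\sigma_0|)$.

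The crux of the argument, and the only point genuinely needing care, is the consistency of both estimators. The consistency of $\hat\sigma_n$ is established in Ogihara~\cite{ogi23}; the key observation under $H_0^{(\sigma)}$ is that $\sigma_0 \in \overline{\Theta}_{0,1}$, so the same identifiability condition (A6) and the argument of Ogihara's Section~3.2 apply verbatim to the constrained maximizer $\tilde\sigma_n$ over $\overline{\Theta}_{0,1}$, yielding $\tilde\sigma_n \overset{P}{\rightarrow}\sigma_0$. With both consistencies in hand the remainder bound vanishes in probability, and combining the two replacement steps gives $\tilde\Gamma_{1,n}^1(\tilde\sigma_n,\hat\sigma_n)\overset{P}{\rightarrow}\Gamma_1$. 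I expect the real subtlety to reside entirely in justifying the consistency of $\tilde\sigma_n$ under the null, the integral estimates themselves being routine.
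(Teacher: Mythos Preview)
Your proposal is correct and follows essentially the same approach as the paper's proof: the same parametrization $\sigma_{u,n}$, the same two-step replacement (uniform convergence from Proposition~3.8 of Ogihara~\cite{ogi23}, then Taylor expansion of $\partial_\sigma^2\mathcal{Y}_1$ around $\sigma_0$ with $\partial_\sigma^3\mathcal{Y}_1$ bounded via (A1)), and the same appeal to consistency of both $\hat\sigma_n$ and $\tilde\sigma_n$ under $H_0^{(\sigma)}$ from Section~3.2 of \cite{ogi23}. Your identification of the consistency of $\tilde\sigma_n$ as the only non-routine point matches the paper's treatment exactly.
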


\begin{lem}\label{lem:2}
Assume conditions (A1) -- (A4) and (A6). Under \(H_{0}^{(\sigma)}\), we have
$\tilde{\Gamma}_{1, n}^0(\tilde{\sigma}_{n}, \sigma_{0})
\overset{P}{\rightarrow}\Gamma_{1}$, 
and
$\tilde{\Gamma}_{1, n}^0(\tilde{\sigma}_{n}, \hat{\sigma}_{n}) 
\overset{P}{\rightarrow}\Gamma_{1}$ as $n\to\infty$.
\end{lem}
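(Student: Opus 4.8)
The plan is to follow the template established in the proof of Lemma~\ref{lem:1}, the only structural difference being the weight $(1-u)^0 = 1$ in place of $2(1-u)$. The essential observation is that the normalized weight $(l+1)(1-u)^l$ integrates to $1$ over $[0,1]$ for both $l=0$ and $l=1$, since $\int_0^1 \dd u = 1$ and $2\int_0^1 (1-u)\,\dd u = 1$; hence for both values of $l$ the quantity $\tilde{\Gamma}_{1,n}^l$ targets the same limit $-\partial_\sigma^2\mathcal{Y}_1(\sigma_0) = \Gamma_1$, provided the integration path stays near $\sigma_0$.

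Concretely, I would treat both assertions through a single argument for a generic pair $(x,y)$ with $x \overset{P}{\rightarrow} \sigma_0$ and $y \overset{P}{\rightarrow} \sigma_0$, and set $\sigma_{u,n} = y + u(x-y)$. First I would invoke Proposition~3.8 of Ogihara~\cite{ogi23}, exactly as in \eqref{lem:1-eq1}, to bound
\[
\Bigl| \int_0^1 \tfrac{1}{n}\partial_\sigma^2\mathbb{H}_n^1(\sigma_{u,n})\,\dd u - \int_0^1 \partial_\sigma^2\mathcal{Y}_1(\sigma_{u,n})\,\dd u \Bigr| \le \sup_{\sigma\in\Theta_1} \Bigl| \tfrac{1}{n}\partial_\sigma^2\mathbb{H}_n^1(\sigma) - \partial_\sigma^2\mathcal{Y}_1(\sigma) \Bigr| \overset{P}{\rightarrow} 0,
\]
which replaces the random Hessian of $\mathbb{H}_n^1$ by the deterministic Hessian of $\mathcal{Y}_1$ uniformly along the path.

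Second, I would Taylor-expand $\partial_\sigma^2\mathcal{Y}_1(\sigma_{u,n})$ about $\sigma_0$ and control the remainder using the boundedness of $\partial_\sigma^3\mathcal{Y}_1$ on $\overline{\Theta}_1$, which follows from (A1) just as in \eqref{lem:1-eq3}. Since $|\sigma_{u,n} - \sigma_0| \le |x - \sigma_0| + |y - \sigma_0|$ uniformly in $u$, the remainder is dominated by a constant times $|x-\sigma_0| + |y-\sigma_0| \overset{P}{\rightarrow} 0$, while the leading term reduces to $\int_0^1 \partial_\sigma^2\mathcal{Y}_1(\sigma_0)\,\dd u = \partial_\sigma^2\mathcal{Y}_1(\sigma_0)$. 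Combining the two displays yields $\tilde{\Gamma}_{1,n}^0(x,y) \overset{P}{\rightarrow} -\partial_\sigma^2\mathcal{Y}_1(\sigma_0) = \Gamma_1$.

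Finally I would specialize: for the first claim take $(x,y) = (\tilde{\sigma}_n, \sigma_0)$, where $y = \sigma_0$ has zero deviation and $\tilde{\sigma}_n \overset{P}{\rightarrow}\sigma_0$ under $H_0^{(\sigma)}$ by the consistency established in Ogihara~\cite{ogi23}; for the second take $(x,y) = (\tilde{\sigma}_n, \hat{\sigma}_n)$, where both $\tilde{\sigma}_n$ and $\hat{\sigma}_n$ are consistent for $\sigma_0$. In both cases the hypotheses of the generic argument hold, giving the two stated convergences. I do not anticipate a genuine obstacle here: the argument is a routine adaptation of Lemma~\ref{lem:1}, and the only point to verify with care is that the consistency of $\tilde{\sigma}_n$ (and of $\hat{\sigma}_n$) under the null forces the entire segment $\{\sigma_{u,n} : u\in[0,1]\}$ into a shrinking neighborhood of $\sigma_0$, which is immediate from convexity.
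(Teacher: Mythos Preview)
Your proposal is correct and follows essentially the same approach as the paper: both use Proposition~3.8 of Ogihara~\cite{ogi23} to replace $n^{-1}\partial_\sigma^2\mathbb{H}_n^1$ by $\partial_\sigma^2\mathcal{Y}_1$ uniformly, then Taylor-expand $\partial_\sigma^2\mathcal{Y}_1$ about $\sigma_0$ and control the remainder via the boundedness of $\partial_\sigma^3\mathcal{Y}_1$ from (A1) together with consistency of the endpoints. The only cosmetic difference is that you package the two cases into a single generic argument for a pair $(x,y)$ with $x,y\overset{P}{\rightarrow}\sigma_0$, whereas the paper writes out the first case and then says ``a similar argument applies'' for the second.
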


\begin{proof}
Define
$\sigma_{u,0,n} = \sigma_{0} + u\bigl(\tilde{\sigma}_{n} - \sigma_{0}\bigr)$.
By an argument similar to that in the proof of Lemma~\ref{lem:1}, we have
\[
\left|
\int_{0}^{1} \frac{1}{n}\,\partial^{2}_{\sigma} \mathbb{H}_{n}^{1}(\sigma_{u,0,n})\,\mathrm{d}u
-
\int_{0}^{1} \partial^{2}_{\sigma} \mathcal{Y}_{1}(\sigma_{u,0,n})\,\mathrm{d}u
\right|
\overset{P}{\rightarrow} 0,
\]
and
\begin{align*}
&\Biggl|
-\int_{0}^{1} \partial^{2}_{\sigma} \mathcal{Y}_{1}(\sigma_{u,0,n})\,\mathrm{d}u
-\Gamma_{1}
\Biggr| \\
&\quad =
\left|
\int_{0}^{1} \partial^{2}_{\sigma} \mathcal{Y}_{1}(\sigma_{u,0,n})\,\mathrm{d}u
-
\int_{0}^{1} \partial^{2}_{\sigma} \mathcal{Y}_{1}(\sigma_{0})\,\mathrm{d}u
\right|\\[1mm]
&\quad = \left|
\int_{0}^{1} \left(\int_{0}^{1}\partial^{3}_{\sigma}\mathcal{Y}_{1}\Bigl(\sigma_{0} + v\bigl(\sigma_{u,0,n} - \sigma_{0}\bigr)\Bigr)\,\mathrm{d}v\right)
\bigl(\sigma_{u,0,n}-\sigma_{0}\bigr)\,\mathrm{d}u
\right|\\[1mm]
&\quad \leq K'\,\bigl|\hat{\sigma}_{n} - \sigma_{0}\bigr|
\overset{P}{\rightarrow} 0. 
\end{align*}
Thus, we obtain $\tilde{\Gamma}_{1, n}^0(\tilde{\sigma}_{n}, \sigma_{0})
\overset{P}{\rightarrow}\Gamma_{1}$.
A similar argument applies to \(\tilde{\Gamma}_{1,n}^0(\tilde{\sigma}_{n}, \hat{\sigma}_{n})\).
\end{proof}

Define the projection \(P_{0}:\mathbb{R}^{d_{1}} \to \mathbb{R}^{d_{1}-r_{1}}\) onto the components \(r_{1}+1, \ldots, d_{1}\) by
\[
P_{0}\begin{pmatrix}
\sigma^{(1)} \\ \sigma^{(2)} \\ \vdots \\ \sigma^{(d_{1})}
\end{pmatrix}
=
\begin{pmatrix}
\sigma^{(r_{1}+1)} \\ \vdots \\ \sigma^{(d_{1})}
\end{pmatrix}.
\]

\begin{lem}\label{lem:4}
Assume \(1\leq r_1<d_1\) and conditions (A1) -- (A4) and (A6). Under \(H_{0}^{(\sigma)}\) and provided that \(\tilde{\sigma}_n\in \Theta_{1}\), we have
\[
\frac{1}{\sqrt{n}}\partial_{P_{0}\sigma}\mathbb{H}_{n}^{1}(\sigma_{0})
=\tilde{\Gamma}_{1,3,n}^0(\tilde{\sigma}_{n}, \sigma_{0})\,\sqrt{n}\,P_{0}\bigl(\tilde{\sigma}_{n} - \sigma_{0}\bigr).
\]
\begin{proof}
Under \(H_{0}^{(\sigma)}\), note that the first \(r_1\) components of \(\tilde{\sigma}_{n}\) and \(\sigma_{0}\) coincide. Expanding with respect to \(P_{0}\sigma\) gives
\begin{equation}\label{eq:1}
\partial_{P_{0}\sigma}\mathbb{H}_{n}^{1}(\tilde{\sigma}_{n}) - \partial_{P_{0}\sigma}\mathbb{H}_{n}^{1}(\sigma_{0})
=\left(\int_{0}^{1}\partial^{2}_{P_{0}\sigma}\mathbb{H}_{n}^{1}\Bigl(\sigma_{0} + u\bigl(\tilde{\sigma}_{n} - \sigma_{0}\bigr)\Bigr)\,\mathrm{d}u\right)
P_{0}\bigl(\tilde{\sigma}_{n} - \sigma_{0}\bigr).
\end{equation}
By the definition of \(\tilde{\Gamma}_{1,3,n}^0(x,y)\),
\[
-\int_{0}^{1} \frac{1}{n}\,\partial^{2}_{P_{0}\sigma}\mathbb{H}_{n}^{1}\Bigl(\sigma_{0} + u\bigl(\tilde{\sigma}_{n} - \sigma_{0}\bigr)\Bigr)\,\mathrm{d}u
=\tilde{\Gamma}_{1,3,n}^0(\tilde{\sigma}_{n}, \sigma_{0}).
\]
Hence, combining with \eqref{eq:1}, we obtain
\[
\frac{1}{\sqrt{n}}\partial_{P_{0}\sigma}\mathbb{H}_{n}^{1}(\tilde{\sigma}_{n})
-\frac{1}{\sqrt{n}}\partial_{P_{0}\sigma}\mathbb{H}_{n}^{1}(\sigma_{0})
=-\tilde{\Gamma}_{1,3,n}^0(\tilde{\sigma}_{n}, \sigma_{0})\,\sqrt{n}\,P_{0}\bigl(\tilde{\sigma}_{n} - \sigma_{0}\bigr).
\]
Since \(\tilde{\sigma}_{n}\in \Theta_{1}\) implies \(\partial_{P_{0}\sigma}\mathbb{H}_{n}^{1}(\tilde{\sigma}_{n})=0\), the claim follows.
\end{proof}
\end{lem}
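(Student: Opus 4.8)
The plan is to exploit that $\tilde{\sigma}_{n}$ maximizes $\mathbb{H}_{n}^{1}$ over the restricted space $\overline{\Theta}_{0,1}$, on which the first $r_{1}$ coordinates are frozen at zero, so that the only free directions are exactly those selected by $P_{0}$. If $\tilde{\sigma}_{n}$ lies in the interior $\Theta_{1}$—which is precisely the hypothesis $\tilde{\sigma}_{n}\in\Theta_{1}$ imposed in the statement—then the first-order optimality condition forces the partial gradient in the free directions to vanish, that is, $\partial_{P_{0}\sigma}\mathbb{H}_{n}^{1}(\tilde{\sigma}_{n})=0$. The asserted identity is then nothing more than a first-order (fundamental-theorem-of-calculus) expansion of this restricted score around $\sigma_{0}$, with the integrated Hessian block recognized as $-n\,\tilde{\Gamma}_{1,3,n}^{0}(\tilde{\sigma}_{n},\sigma_{0})$.

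Concretely, I would proceed in three steps. First I would record that under $H_{0}^{(\sigma)}$ both $\tilde{\sigma}_{n}$ and $\sigma_{0}$ have their first $r_{1}$ coordinates equal to zero, so the displacement $\tilde{\sigma}_{n}-\sigma_{0}$ is supported on the free coordinates and is recovered exactly by $P_{0}(\tilde{\sigma}_{n}-\sigma_{0})$. Second, I would apply the fundamental theorem of calculus to the map $u\mapsto\partial_{P_{0}\sigma}\mathbb{H}_{n}^{1}\bigl(\sigma_{0}+u(\tilde{\sigma}_{n}-\sigma_{0})\bigr)$ on $[0,1]$ to obtain
\[
\partial_{P_{0}\sigma}\mathbb{H}_{n}^{1}(\tilde{\sigma}_{n})-\partial_{P_{0}\sigma}\mathbb{H}_{n}^{1}(\sigma_{0})
=\Bigl(\int_{0}^{1}\partial^{2}_{P_{0}\sigma}\mathbb{H}_{n}^{1}\bigl(\sigma_{0}+u(\tilde{\sigma}_{n}-\sigma_{0})\bigr)\,\dd u\Bigr)P_{0}(\tilde{\sigma}_{n}-\sigma_{0}),
\]
where the chain rule produces derivatives only in the free coordinates because the frozen ones do not move along the segment. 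Third, dividing by $n$ and comparing with the definition of $\tilde{\Gamma}_{1,n}^{l}$ at $l=0$, the relevant integral equals $-\tilde{\Gamma}_{1,3,n}^{0}(\tilde{\sigma}_{n},\sigma_{0})$; inserting the first-order condition and rescaling by $\sqrt{n}$ then yields the claim.

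I expect no genuine analytic difficulty here, since the statement is an exact algebraic rearrangement rather than an estimate, so the work is purely bookkeeping. The one point where the hypothesis is truly used is interiority: only $\tilde{\sigma}_{n}\in\Theta_{1}$ guarantees the first-order condition $\partial_{P_{0}\sigma}\mathbb{H}_{n}^{1}(\tilde{\sigma}_{n})=0$, which is why it appears separately in the statement and is not a consequence of consistency alone—consistency merely places $\tilde{\sigma}_{n}$ near $\sigma_{0}$, not strictly inside $\Theta_{1}$ at finite $n$. I would also keep careful track of the fact that $P_{0}$ is applied consistently on both sides, so that $\partial_{P_{0}\sigma}\mathbb{H}_{n}^{1}$ and the block $\tilde{\Gamma}_{1,3,n}^{0}$ both live in the $(d_{1}-r_{1})$-dimensional free space, and that the off-diagonal block $\tilde{\Gamma}_{1,2,n}^{0}$ never enters, precisely because the constrained coordinates of $\tilde{\sigma}_{n}-\sigma_{0}$ vanish under $H_{0}^{(\sigma)}$.
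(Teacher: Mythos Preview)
Your proposal is correct and follows essentially the same approach as the paper: observe that under $H_{0}^{(\sigma)}$ the displacement $\tilde{\sigma}_{n}-\sigma_{0}$ lives only in the free coordinates, apply the fundamental theorem of calculus to $\partial_{P_{0}\sigma}\mathbb{H}_{n}^{1}$ along the segment, identify the integrated Hessian block with $-n\,\tilde{\Gamma}_{1,3,n}^{0}(\tilde{\sigma}_{n},\sigma_{0})$, and use the interior first-order condition $\partial_{P_{0}\sigma}\mathbb{H}_{n}^{1}(\tilde{\sigma}_{n})=0$. Your additional remarks on why $\tilde{\Gamma}_{1,2,n}^{0}$ never appears and on the precise role of the interiority hypothesis are accurate and clarify points the paper leaves implicit.
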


\begin{lem}\label{lem:5}
Assume \(1\leq r_1<d_1\) and let \(Z\sim \mathcal{N}(0, \mathcal{E}_{d_{1}})\). Under conditions (A1) -- (A4) and (A6), under \(H_{0}^{(\sigma)}\) we have
\[
\sqrt{n}\bigl(\tilde{\sigma}_{n} - \hat{\sigma}_{n}\bigr)
\overset{d}{\rightarrow} -\Gamma_{1}^{-1}\bigl(\mathcal{E}_{d_{1}} - \Gamma_{1}\Lambda_{1}\bigr)\Gamma_{1}^{1/2}Z
\]
as $n\to\infty$.
\begin{proof}
Considering a Taylor expansion in \(\sigma\), we have
\begin{equation}\label{lem:5-eq1}
\begin{split}
\frac{1}{\sqrt{n}}\partial_{\sigma}\mathbb{H}_{n}^{1}(\tilde{\sigma}_{n})
-\frac{1}{\sqrt{n}}\partial_{\sigma}\mathbb{H}_{n}^{1}(\sigma_{0})
=-\tilde{\Gamma}_{1,n}^0(\tilde{\sigma}_{n}, \sigma_{0})\,\sqrt{n}\bigl(\tilde{\sigma}_{n}-\sigma_{0}\bigr).
\end{split}
\end{equation}
Define
\[
\tilde{\Lambda}_{1,n}(\tilde{\sigma}_{n}, \sigma_{0}) = 
\begin{pmatrix}
O & O \\[2mm]
O & (\tilde{\Gamma}_{1,3,n}^0)^{-1}(\tilde{\sigma}_{n}, \sigma_{0})
\end{pmatrix}\mathbf{1}_{\left\{\det \tilde{\Gamma}_{1,3,n}^0(\tilde{\sigma}_{n}, \sigma_{0}) > 0\right\}}.
\]
Then, by Lemma~\ref{lem:4} (assuming \(\tilde{\sigma}_{n}\in \Theta_{1}\)),
\[
\sqrt{n}\bigl(\tilde{\sigma}_{n} - \sigma_{0}\bigr)
=\tilde{\Lambda}_{1,n}(\tilde{\sigma}_{n}, \sigma_{0})\frac{1}{\sqrt{n}}\partial_{\sigma}\mathbb{H}_{n}^{1}(\sigma_{0})
\quad \text{on } \left\{\det \tilde{\Gamma}_{1,3,n}^0 (\tilde{\sigma}_{n}, \sigma_{0}) > 0\right\}.
\]
Thus, combining with \eqref{lem:5-eq1},
\[
\frac{1}{\sqrt{n}}\partial_{\sigma}\mathbb{H}_{n}^{1}(\tilde{\sigma}_{n})
-\frac{1}{\sqrt{n}}\partial_{\sigma}\mathbb{H}_{n}^{1}(\sigma_{0})
=-\tilde{\Gamma}_{1,n}^0 (\tilde{\sigma}_{n}, \sigma_{0})
\tilde{\Lambda}_{1,n}(\tilde{\sigma}_{n}, \sigma_{0})\frac{1}{\sqrt{n}}\partial_{\sigma}\mathbb{H}_{n}^{1}(\sigma_{0})
\]
on \(\{\det \tilde{\Gamma}_{1,3,n}^0 (\tilde{\sigma}_{n}, \sigma_{0}) > 0\}\). Hence,
\[
\frac{1}{\sqrt{n}}\partial_{\sigma}\mathbb{H}_{n}^{1}(\tilde{\sigma}_{n})
=(\mathcal{E}_{d_{1}} - \tilde{\Gamma}_{1,n}^0 \tilde{\Lambda}_{1,n}(\tilde{\sigma}_{n}, \sigma_{0}))
\frac{1}{\sqrt{n}}\partial_{\sigma}\mathbb{H}_{n}^{1}(\sigma_{0})
\quad \text{on } \{\det \tilde{\Gamma}_{1,3,n}^0 (\tilde{\sigma}_{n}, \sigma_{0}) > 0\}.
\]
By Lemma~\ref{lem:2} and the positive definiteness of  \(\Gamma_{1}\), we obtain
$P\Bigl(\det \tilde{\Gamma}_{1,3,n}^0 (\tilde{\sigma}_{n}, \sigma_{0}) \leq 0\Bigr) \to 0$,
which implies
$\tilde{\Lambda}_{1,n}(\tilde{\sigma}_{n}, \sigma_{0}) \overset{P}{\rightarrow} \Lambda_{1}$.
Moreover, by Ogihara~\cite{ogi23} Proposition~3.14,
$n^{-1/2}\partial_{\sigma}\mathbb{H}_{n}^{1}(\sigma_{0})
\overset{d}{\rightarrow} \mathcal{N}(0,\Gamma_{1})$.
Thus, letting \(Z\sim \mathcal{N}(0,\mathcal{E}_{d_{1}})\),
\begin{equation}\label{eq:2}
\frac{1}{\sqrt{n}}\partial_{\sigma}\mathbb{H}_{n}^{1}(\tilde{\sigma}_{n})
\overset{d}{\rightarrow} (\mathcal{E}_{d_{1}} - \Gamma_{1}\Lambda_{1})\Gamma_{1}^{1/2}Z.
\end{equation}
On the other hand, by the consistency of \(\hat{\sigma}_{n}\) (which implies \(P(\hat{\sigma}_n\in\Theta_{1})\to 1\)) and noting that \(\partial_{\sigma}\mathbb{H}_{n}^{1}(\hat{\sigma}_{n}) = 0\) whenever \(\hat{\sigma}_{n}\in\Theta_{1}\), a Taylor expansion yields
\begin{align*}
\frac{1}{\sqrt{n}}\partial_{\sigma}\mathbb{H}_{n}^{1}(\tilde{\sigma}_{n})
-\frac{1}{\sqrt{n}}\partial_{\sigma}\mathbb{H}_{n}^{1}(\hat{\sigma}_{n})
=-\tilde{\Gamma}_{1,n}^0(\tilde{\sigma}_{n}, \hat{\sigma}_{n})\,\sqrt{n}\bigl(\tilde{\sigma}_{n} - \hat{\sigma}_{n}\bigr).
\end{align*}
Thus,
\begin{equation}\label{eq:3}
\sqrt{n}\bigl(\tilde{\sigma}_{n} - \hat{\sigma}_{n}\bigr)
=-(\tilde{\Gamma}_{1,n}^0)^{-1}(\tilde{\sigma}_{n}, \hat{\sigma}_{n})\frac{1}{\sqrt{n}}\partial_{\sigma}\mathbb{H}_{n}^{1}(\tilde{\sigma}_{n})
\quad \text{on } \{\det \tilde{\Gamma}_{1,n}^0 (\tilde{\sigma}_{n}, \hat{\sigma}_{n})>0\}.
\end{equation}
Again by Lemma~\ref{lem:2},
$P\Bigl(\det \tilde{\Gamma}_{1,n}^0 (\tilde{\sigma}_{n}, \hat{\sigma}_{n}) \leq 0\Bigr) \to 0$,
which implies that
\[
(\tilde{\Gamma}_{1,n}^0)^{-1}(\tilde{\sigma}_{n}, \hat{\sigma}_{n})\mathbf{1}_{\{\det \tilde{\Gamma}_{1,n}^0 (\tilde{\sigma}_{n}, \hat{\sigma}_{n})>0\}}
\overset{P}{\rightarrow} \Gamma_{1}^{-1}.
\]
Combining \eqref{eq:2} and \eqref{eq:3}, we obtain
\[
\sqrt{n}\bigl(\tilde{\sigma}_{n} - \hat{\sigma}_{n}\bigr)
\overset{d}{\rightarrow} -\Gamma_{1}^{-1}\bigl(\mathcal{E}_{d_{1}} - \Gamma_{1}\Lambda_{1}\bigr)\Gamma_{1}^{1/2}Z, \quad (n\to\infty).
\]
\end{proof}
\end{lem}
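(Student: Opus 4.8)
The plan is to express $\sqrt{n}(\tilde{\sigma}_{n}-\hat{\sigma}_{n})$ through the normalized score $n^{-1/2}\partial_{\sigma}\mathbb{H}_{n}^{1}$ evaluated at the restricted estimator $\tilde{\sigma}_{n}$, and then to identify the limiting distribution of that score. First I would carry out two Taylor expansions of the score: expanding $n^{-1/2}\partial_{\sigma}\mathbb{H}_{n}^{1}(\tilde{\sigma}_{n})$ about $\sigma_{0}$ produces the factor $\tilde{\Gamma}_{1,n}^{0}(\tilde{\sigma}_{n},\sigma_{0})$, while expanding about $\hat{\sigma}_{n}$ produces $\tilde{\Gamma}_{1,n}^{0}(\tilde{\sigma}_{n},\hat{\sigma}_{n})$. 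Since $\hat{\sigma}_{n}$ is the unrestricted maximizer, $\partial_{\sigma}\mathbb{H}_{n}^{1}(\hat{\sigma}_{n})=0$ on the asymptotically certain event $\{\hat{\sigma}_{n}\in\Theta_{1}\}$, so the second expansion immediately yields $\sqrt{n}(\tilde{\sigma}_{n}-\hat{\sigma}_{n})=-(\tilde{\Gamma}_{1,n}^{0})^{-1}(\tilde{\sigma}_{n},\hat{\sigma}_{n})\,n^{-1/2}\partial_{\sigma}\mathbb{H}_{n}^{1}(\tilde{\sigma}_{n})$. By Lemma~\ref{lem:2}, $\tilde{\Gamma}_{1,n}^{0}(\tilde{\sigma}_{n},\hat{\sigma}_{n})\overset{P}{\rightarrow}\Gamma_{1}$, and positive definiteness of $\Gamma_{1}$ keeps the relevant determinant positive with probability tending to one, so this truncated inverse converges in probability to $\Gamma_{1}^{-1}$.

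The substance of the argument is to show $n^{-1/2}\partial_{\sigma}\mathbb{H}_{n}^{1}(\tilde{\sigma}_{n})\overset{d}{\rightarrow}(\mathcal{E}_{d_{1}}-\Gamma_{1}\Lambda_{1})\Gamma_{1}^{1/2}Z$. The key observation is that $\tilde{\sigma}_{n}$ is constrained so that its first $r_{1}$ coordinates coincide with those of $\sigma_{0}$; hence only the nuisance directions are free, and the first-order condition $\partial_{P_{0}\sigma}\mathbb{H}_{n}^{1}(\tilde{\sigma}_{n})=0$ holds on $\{\tilde{\sigma}_{n}\in\Theta_{1}\}$. Lemma~\ref{lem:4} then expresses $\sqrt{n}\,P_{0}(\tilde{\sigma}_{n}-\sigma_{0})$ as $(\tilde{\Gamma}_{1,3,n}^{0})^{-1}$ applied to the nuisance score $n^{-1/2}\partial_{P_{0}\sigma}\mathbb{H}_{n}^{1}(\sigma_{0})$. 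Combining this with the vanishing of the first $r_{1}$ coordinates of $\tilde{\sigma}_{n}-\sigma_{0}$, I would package the relation as $\sqrt{n}(\tilde{\sigma}_{n}-\sigma_{0})=\tilde{\Lambda}_{1,n}\,n^{-1/2}\partial_{\sigma}\mathbb{H}_{n}^{1}(\sigma_{0})$, where $\tilde{\Lambda}_{1,n}$ is the block-diagonal matrix that vanishes on the tested coordinates and equals $(\tilde{\Gamma}_{1,3,n}^{0})^{-1}$ on the nuisance coordinates (restricted to the event of positive-definiteness via an indicator).

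Substituting this into the first Taylor expansion gives $n^{-1/2}\partial_{\sigma}\mathbb{H}_{n}^{1}(\tilde{\sigma}_{n})=(\mathcal{E}_{d_{1}}-\tilde{\Gamma}_{1,n}^{0}\,\tilde{\Lambda}_{1,n})\,n^{-1/2}\partial_{\sigma}\mathbb{H}_{n}^{1}(\sigma_{0})$. I would then invoke Lemma~\ref{lem:2} to obtain $\tilde{\Gamma}_{1,n}^{0}\overset{P}{\rightarrow}\Gamma_{1}$ and $\tilde{\Lambda}_{1,n}\overset{P}{\rightarrow}\Lambda_{1}$, together with Proposition~3.14 of Ogihara~\cite{ogi23}, which gives $n^{-1/2}\partial_{\sigma}\mathbb{H}_{n}^{1}(\sigma_{0})\overset{d}{\rightarrow}\mathcal{N}(0,\Gamma_{1})$, i.e.\ equal in law to $\Gamma_{1}^{1/2}Z$; Slutsky's theorem then delivers the stated limit for the score at $\tilde{\sigma}_{n}$. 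Finally, combining with the earlier display for $\sqrt{n}(\tilde{\sigma}_{n}-\hat{\sigma}_{n})$ and applying Slutsky once more (with $(\tilde{\Gamma}_{1,n}^{0})^{-1}\overset{P}{\rightarrow}\Gamma_{1}^{-1}$) yields the claimed convergence. I expect the main obstacle to be the bookkeeping around the degeneracy events: one must ensure that the indicator-truncated inverses $\tilde{\Lambda}_{1,n}$ and $(\tilde{\Gamma}_{1,n}^{0})^{-1}$ converge in probability to their deterministic limits, and that the events $\{\det\tilde{\Gamma}_{1,3,n}^{0}>0\}$ and $\{\tilde{\sigma}_{n},\hat{\sigma}_{n}\in\Theta_{1}\}$ carry probability tending to one, so that identities valid only on those events propagate correctly to the distributional limit.
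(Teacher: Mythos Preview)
Your proposal is correct and follows essentially the same approach as the paper's proof: the paper likewise performs the two Taylor expansions of $n^{-1/2}\partial_{\sigma}\mathbb{H}_{n}^{1}(\tilde{\sigma}_{n})$ about $\sigma_{0}$ and $\hat{\sigma}_{n}$, introduces the same indicator-truncated block matrix $\tilde{\Lambda}_{1,n}$, invokes Lemma~\ref{lem:4} and Lemma~\ref{lem:2} together with Proposition~3.14 of Ogihara~\cite{ogi23}, and concludes via Slutsky's theorem. The bookkeeping around the degeneracy events that you flag as the main obstacle is handled in the paper exactly as you anticipate.
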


\subsubsection*{Proof of Theorem \ref{thm:sigma0}}
By Taylor's theorem, we have
\begin{align*}
\mathbb{H}_{n}^{1}(\tilde{\sigma}_{n}) 
= \mathbb{H}_{n}^{1}(\hat{\sigma}_{n}) + \partial_\sigma \mathbb{H}_{n}^{1}(\hat{\sigma}_{n})\bigl(\tilde{\sigma}_n - \hat{\sigma}_n\bigr) 
-\frac{1}{2} \tilde{\Gamma}_{1,n}^1(\tilde{\sigma}_n,\hat{\sigma}_n)\Bigl\llbracket \Bigl[\sqrt{n}(\tilde{\sigma}_n - \hat{\sigma}_n)\Bigr]^{\otimes 2} \Bigr\rrbracket.
\end{align*}
Since \(\hat{\sigma}_n\) is consistent (i.e., \(P(\hat{\sigma}_n\in\Theta_{1})\to1\)) and, when \(\hat{\sigma}_n\in\Theta_{1}\), we have \(\partial_{\sigma}\mathbb{H}_{n}^{1}(\hat{\sigma}_n)=0\), it follows that
\begin{equation}\label{eq:4}
\mathbb{H}_{n}^{1}(\tilde{\sigma}_{n}) - \mathbb{H}_{n}^{1}(\hat{\sigma}_{n})
=-\frac{1}{2} \tilde{\Gamma}_{1,n}^1(\tilde{\sigma}_n,\hat{\sigma}_n)\Bigl\llbracket \Bigl[\sqrt{n}(\tilde{\sigma}_n - \hat{\sigma}_n)\Bigr]^{\otimes 2} \Bigr\rrbracket.
\end{equation}

We first consider the case \(r_{1} = d_{1}\). In this situation, \(\tilde{\sigma}_n = \sigma_{0} = 0\). Let \(Z\sim\mathcal{N}(0,\mathcal{E}_{d_{1}})\). Then, by Ogihara~\cite{ogi23} Theorem~2.3,
$\sqrt{n}(\hat{\sigma}_n - \sigma_{0}) \overset{d}{\rightarrow} \Gamma_{1}^{-1/2}Z$.
Thus,
\begin{equation}\label{eq:5}
\sqrt{n}(\tilde{\sigma}_n - \hat{\sigma}_n) \overset{d}{\rightarrow} \Gamma_{1}^{-1/2}Z.
\end{equation}
Then, using \eqref{eq:4}, \eqref{eq:5}, Lemma~\ref{lem:1}, and Slutsky\textquotesingle s theorem, we obtain
\[
T_{n}^{1} = 2\Bigl(\mathbb{H}_{n}^{1}(\hat{\sigma}_n)-\mathbb{H}_{n}^{1}(\tilde{\sigma}_n)\Bigr)
\overset{d}{\rightarrow} Z^{\top}\Gamma_{1}^{-1/2}\Gamma_{1}\Gamma_{1}^{-1/2}Z
\sim \chi^{2}_{d_{1}} = \chi^{2}_{r_{1}}.
\]

Next, consider the case \(1\le r_{1} < d_{1}\). Using \eqref{eq:4}, along with Lemmas~\ref{lem:1} and \ref{lem:5} and applying Slutsky\textquotesingle s theorem, we have
\begin{align*}
T_{n}^{1} &= -2\Bigl(\mathbb{H}_{n}^{1}(\tilde{\sigma}_n)-\mathbb{H}_{n}^{1}(\hat{\sigma}_n)\Bigr)\\[1mm]
&= \tilde{\Gamma}_{1,n}^1(\tilde{\sigma}_n, \hat{\sigma}_n)
\Bigl\llbracket\Bigl[\sqrt{n}(\tilde{\sigma}_n-\hat{\sigma}_n)\Bigr]^{\otimes 2} \Bigr\rrbracket\\[1mm]
&\overset{d}{\rightarrow} Z^{\top}\Gamma_{1}^{1/2}\bigl(\mathcal{E}_{d_{1}}-\Gamma_{1}\Lambda_{1}\bigr)^{\top}\Gamma_{1}^{-1}\bigl(\mathcal{E}_{d_{1}}-\Gamma_{1}\Lambda_{1}\bigr)\Gamma_{1}^{1/2}Z\\[1mm]
&= Z^{\top}\Gamma_{1}^{1/2}\bigl(\Gamma_{1}^{-1}-\Lambda_{1}\bigr)\Gamma_{1}^{1/2}Z.
\end{align*}
Here we used the fact that
\[
\Lambda_{1}\Gamma_{1}\Lambda_{1} =
\begin{pmatrix}
O & O \\[2mm]
O & (\Gamma_{1,3})^{-1}
\end{pmatrix}
\begin{pmatrix*}[c]
\Gamma_{1,1} & \Gamma_{1,2} \\[2mm]
\Gamma_{1,2}^{\top} & \Gamma_{1,3}
\end{pmatrix*}
\begin{pmatrix}
O & O \\[2mm]
O & (\Gamma_{1,3})^{-1}
\end{pmatrix}
=\Lambda_{1}.
\]
Moreover, noting that
\begin{align*}
\Gamma_{1}^{1/2}\bigl(\Gamma_{1}^{-1}-\Lambda_{1}\bigr)\Gamma_{1}^{1/2}\Gamma_{1}^{1/2}\bigl(\Gamma_{1}^{-1}-\Lambda_{1}\bigr)\Gamma_{1}^{1/2}
&=\Gamma_{1}^{1/2}\bigl(\Gamma_{1}^{-1}-\Lambda_{1}\bigr)(\mathcal{E}_{d_{1}}-\Gamma_{1}\Lambda_{1})\Gamma_{1}^{1/2}\\[1mm]
&=\Gamma_{1}^{1/2}\bigl(\Gamma_{1}^{-1}-\Lambda_{1}\bigr)\Gamma_{1}^{1/2},
\end{align*}
we see that \(\Gamma_{1}^{1/2}\bigl(\Gamma_{1}^{-1}-\Lambda_{1}\bigr)\Gamma_{1}^{1/2}\) is a projection matrix. Its rank, equal to its trace, is given by
\begin{align*}
\mathrm{tr}\Bigl\{\Gamma_{1}^{1/2}\bigl(\Gamma_{1}^{-1}-\Lambda_{1}\bigr)\Gamma_{1}^{1/2}\Bigr\}
=\mathrm{tr}\Bigl(\mathcal{E}_{d_{1}}-\Gamma_{1}\Lambda_{1}\Bigr)
=\mathrm{tr}\begin{pmatrix*}[c]
\mathcal{E}_{r_{1}} & * \\[2mm]
O & O
\end{pmatrix*}
= r_{1},
\end{align*}
which leads to the conclusion.

\qed

To prove Theorem \ref{thm:sigma1}, we introduce an additional lemma. The function \(\mathcal{Y}_1(\sigma)\) was the probability limit of
$n^{-1}\left(\mathbb{H}_n^1(\sigma) - \mathbb{H}_n^1(\sigma_0)\right)$,
and this convergence remains valid even when \(\sigma_0\) is replaced by \(\hat{\sigma}_n\).

\begin{lem}\label{lem:3}
Assume conditions (A1) -- (A4) and (A6). Then,
\[
\sup_{\sigma\in\Theta_{1}}\left|\frac{1}{n}\Bigl(\mathbb{H}_{n}^{1}(\sigma)-\mathbb{H}_{n}^{1}(\hat{\sigma}_{n})\Bigr)-\mathcal{Y}_{1}(\sigma)\right|
\overset{P}{\rightarrow}0, \quad (n\to\infty).
\]
\begin{proof}
We have
\begin{align*}
\left|\frac{1}{n}\Bigl(\mathbb{H}_{n}^{1}(\sigma)-\mathbb{H}_{n}^{1}(\hat{\sigma}_{n})\Bigr)-\mathcal{Y}_{1}(\sigma)\right|
&\le \left|\frac{1}{n}\Bigl(\mathbb{H}_{n}^{1}(\sigma)-\mathbb{H}_{n}^{1}(\sigma_{0})\Bigr)-\mathcal{Y}_{1}(\sigma)\right|\\[1mm]
&\quad +\left|\frac{1}{n}\Bigl(\mathbb{H}_{n}^{1}(\hat{\sigma}_{n})-\mathbb{H}_{n}^{1}(\sigma_{0})\Bigr)
-\mathcal{Y}_{1}(\hat{\sigma}_{n})\right|
+\left|\mathcal{Y}_{1}(\hat{\sigma}_{n})\right|.
\end{align*}
By Ogihara~\cite{ogi23} Proposition~3.8,
\[
\sup_{\sigma\in\Theta_{1}}\left|\frac{1}{n}\Bigl(\mathbb{H}_{n}^{1}(\sigma)-\mathbb{H}_{n}^{1}(\sigma_{0})\Bigr)-\mathcal{Y}_{1}(\sigma)\right|
\overset{P}{\rightarrow}0.
\]
Thus, it suffices to show that
$-\mathcal{Y}_{1}(\hat{\sigma}_{n})\overset{P}{\rightarrow}0$.
Since \(\mathcal{Y}_{1}(\sigma_{0})=0\) and by the boundedness of the derivatives of \(\mathcal{Y}_{1}(\sigma)\) (cf. \eqref{lem:1-eq3}) along with the consistency of \(\hat{\sigma}_n\), we have
\begin{align*}
0\le -\mathcal{Y}_{1}(\hat{\sigma}_{n})
&=\left|\mathcal{Y}_{1}(\hat{\sigma}_{n})-\mathcal{Y}_{1}(\sigma_{0})\right|\\[1mm]
&=\left|\left(\int_{0}^{1}\partial_{\sigma}\mathcal{Y}_{1}\Bigl(\sigma_{0}+u\bigl(\hat{\sigma}_{n}-\sigma_{0}\bigr)\Bigr)\dd u\right)
\bigl(\hat{\sigma}_{n}-\sigma_{0}\bigr)\right|\\[1mm]
&\le K'\left|\hat{\sigma}_{n}-\sigma_{0}\right|\\[1mm]
&\overset{P}{\rightarrow}0.
\end{align*}
This completes the proof.
\end{proof}
\end{lem}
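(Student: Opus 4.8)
The plan is to reduce the statement to the uniform convergence result already available for the recentered quasi-log-likelihood, namely Proposition~3.8 of Ogihara~\cite{ogi23}, which gives $\sup_{\sigma\in\Theta_{1}}\bigl|n^{-1}(\mathbb{H}_{n}^{1}(\sigma)-\mathbb{H}_{n}^{1}(\sigma_{0}))-\mathcal{Y}_{1}(\sigma)\bigr|\overset{P}{\rightarrow}0$, together with the consistency of $\hat{\sigma}_{n}$. The key observation is that the quantity to be controlled differs from the one in Proposition~3.8 only through the replacement of the deterministic centering $\sigma_{0}$ by the random estimator $\hat{\sigma}_{n}$, and this replacement can be absorbed by inserting $\mathbb{H}_{n}^{1}(\sigma_{0})$ as an intermediate centering.

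Concretely, I would first write the exact algebraic identity
\begin{align*}
\frac{1}{n}\bigl(\mathbb{H}_{n}^{1}(\sigma)-\mathbb{H}_{n}^{1}(\hat{\sigma}_{n})\bigr)-\mathcal{Y}_{1}(\sigma)
&=\Bigl[\tfrac{1}{n}\bigl(\mathbb{H}_{n}^{1}(\sigma)-\mathbb{H}_{n}^{1}(\sigma_{0})\bigr)-\mathcal{Y}_{1}(\sigma)\Bigr]\\
&\quad-\Bigl[\tfrac{1}{n}\bigl(\mathbb{H}_{n}^{1}(\hat{\sigma}_{n})-\mathbb{H}_{n}^{1}(\sigma_{0})\bigr)-\mathcal{Y}_{1}(\hat{\sigma}_{n})\Bigr]-\mathcal{Y}_{1}(\hat{\sigma}_{n}),
\end{align*}
obtained by adding and subtracting $n^{-1}\mathbb{H}_{n}^{1}(\sigma_{0})$ and $\mathcal{Y}_{1}(\hat{\sigma}_{n})$. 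Taking the supremum over $\sigma\in\Theta_{1}$ and applying the triangle inequality then bounds the target by a sum of three terms, which I would dispose of in turn. The supremum of the first bracket vanishes in probability directly by Proposition~3.8. The second bracket is precisely the first bracket evaluated at the single (random) point $\sigma=\hat{\sigma}_{n}$, so it is dominated by the same supremum and hence also vanishes; no separate stochastic argument for the random argument is needed, because the convergence in Proposition~3.8 is \emph{uniform} in $\sigma$. The remaining term $\mathcal{Y}_{1}(\hat{\sigma}_{n})$ is handled by a first-order Taylor expansion of $\mathcal{Y}_{1}$ about $\sigma_{0}$: using $\mathcal{Y}_{1}(\sigma_{0})=0$, the boundedness of $\partial_{\sigma}\mathcal{Y}_{1}$ furnished by (A1) (as recorded in \eqref{lem:1-eq3}), and $\hat{\sigma}_{n}\overset{P}{\rightarrow}\sigma_{0}$, one obtains $|\mathcal{Y}_{1}(\hat{\sigma}_{n})|\le K'|\hat{\sigma}_{n}-\sigma_{0}|\overset{P}{\rightarrow}0$.

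The argument is essentially routine once the decomposition is in place. I expect the only delicate point to be the second term, where the argument is the random estimator $\hat{\sigma}_{n}$ rather than a fixed parameter; this is the main (and mild) obstacle, and it is resolved entirely by the uniformity in $\sigma$ of Proposition~3.8, which permits evaluating the uniform bound at $\hat{\sigma}_{n}$ without invoking any additional control.
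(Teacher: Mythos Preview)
Your proposal is correct and follows essentially the same approach as the paper: the same three-term decomposition via adding and subtracting $\mathbb{H}_{n}^{1}(\sigma_{0})$ and $\mathcal{Y}_{1}(\hat{\sigma}_{n})$, the same appeal to Proposition~3.8 of Ogihara~\cite{ogi23} for the first two terms (exploiting uniformity in $\sigma$ for the term evaluated at $\hat{\sigma}_{n}$), and the same Taylor-expansion argument using $\mathcal{Y}_{1}(\sigma_{0})=0$, \eqref{lem:1-eq3}, and the consistency of $\hat{\sigma}_{n}$ for the final term.
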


\subsubsection*{Proof of Theorem \ref{thm:sigma1}}
By Proposition~3.9 in Ogihara~\cite{ogi23}, Assumption (A6), and Remark~4 in Ogihara and Yoshida~\cite{ogi-yos14}, there exists a positive constant \(c\) such that
$\mathcal{Y}_{1}(\sigma) \le -c\,\lvert\sigma - \sigma_{0}\rvert^{2}$.
Under \(H_{1}^{(\sigma)}\), there exists some \(\ell_{1}\in\{1,\ldots,r_{1}\}\) for which \(\sigma_{0}^{(\ell_{1})}\neq 0\). Therefore,
\[
\frac{1}{2n}T_{n}^{1} + \left|\frac{1}{2n}T_{n}^{1} + \mathcal{Y}_{1}(\tilde{\sigma}_{n})\right|
\ge -\mathcal{Y}_{1}(\tilde{\sigma}_{n})
\ge c\,\lvert\tilde{\sigma}_{n} - \sigma_{0}\rvert^{2}
\ge c\,\lvert\sigma_{0}^{(\ell_{1})}\rvert^{2} > 0.
\]
Hence,
\begin{align*}
P\bigl(T_{n}^{1} \le M\bigr)
&= P\Biggl(\frac{1}{2n}T_{n}^{1} + \left|\frac{1}{2n}T_{n}^{1} + \mathcal{Y}_{1}(\tilde{\sigma}_{n})\right|
\le \frac{M}{2n} + \left|\frac{1}{2n}T_{n}^{1} + \mathcal{Y}_{1}(\tilde{\sigma}_{n})\right|\Biggr)\\[1mm]
&\le P\Biggl(c\,\lvert\sigma_{0}^{(\ell_{1})}\rvert^{2}
\le \frac{M}{2n} + \Biggl|\frac{1}{n}\Bigl(\mathbb{H}_{n}^{1}(\hat{\sigma}_{n}) - \mathbb{H}_{n}^{1}(\tilde{\sigma}_{n})\Bigr) + \mathcal{Y}_{1}(\tilde{\sigma}_{n})\Biggr|\Biggr)\\[1mm]
&\le P\Biggl(c\,\lvert\sigma_{0}^{(\ell_{1})}\rvert^{2} - \frac{M}{2n}
\le \sup_{\sigma\in\Theta_{1}}\Biggl|\frac{1}{n}\Bigl(\mathbb{H}_{n}^{1}(\sigma) - \mathbb{H}_{n}^{1}(\hat{\sigma}_{n})\Bigr) + \mathcal{Y}_{1}(\sigma)\Biggr|\Biggr)\\[1mm]
&\le P\Biggl(\frac{1}{2}c\,\lvert\sigma_{0}^{(\ell_{1})}\rvert^{2}
\le \sup_{\sigma\in\Theta_{1}}\Biggl|\frac{1}{n}\Bigl(\mathbb{H}_{n}^{1}(\sigma) - \mathbb{H}_{n}^{1}(\hat{\sigma}_{n})\Bigr) - \mathcal{Y}_{1}(\sigma)\Biggr|\Biggr) + o(1).
\end{align*}
By Lemma~\ref{lem:3}, the right-hand side converges to 0, which completes the proof.

\qed

\subsection{Proofs of Theorems \ref{thm:theta0} and \ref{thm:theta1}}

Next, we prove Theorems \ref{thm:theta0} and \ref{thm:theta1}. The basic structure of the proof is analogous to that of Theorems \ref{thm:sigma0} and \ref{thm:sigma1}. In Proposition 3.16 in Ogihara~\cite{ogi23}, a result on the uniform convergence in \(\theta\) of
$(nh_n)^{-1}\left(\mathbb{H}_n^2(\theta)-\mathbb{H}_n^2(\theta_0)\right)$
is established, and we employ this result in our proof.

For \(x,y\in\Theta_{2}\), define
\[
\tilde{\Gamma}_{2,n}^l(x,y) = -(l+1)\int_{0}^{1} \frac{(1-u)^l}{nh_{n}}\,\partial^{2}_{\theta} \mathbb{H}_{n}^{2}\Bigl(y+u\bigl(x-y\bigr)\Bigr)\,\mathrm{d}u
\]
for $l\in \{0,1\}$,
and decompose it as
\[
\tilde{\Gamma}_{2,n}^l(x,y) =
\begin{pmatrix*}[c]
\tilde{\Gamma}_{2,1,n}^l & \tilde{\Gamma}_{2,2,n}^l \\[2mm]
(\tilde{\Gamma}_{2,2,n}^l)^{\top} & \tilde{\Gamma}_{2,3,n}^l
\end{pmatrix*}(x,y).
\]

The following two lemmas can be proved in the same manner as Lemmas~\ref{lem:1} and~\ref{lem:2}, using Proposition~3.16 in Ogihara~\cite{ogi23} and the boundedness of \(\partial_\theta^3 \mathcal{Y}_2\) following from (A1). Therefore, the proofs are omitted.

\begin{lem}\label{lem:6}
Assume conditions (A1) -- (A6). Then, under \(H_{0}^{(\theta)}\), 
$\tilde{\Gamma}_{2,n}^1(\tilde{\theta}_n,\hat{\theta}_n)
\overset{P}{\rightarrow}\Gamma_{2}$ as $n\to\infty$.
\end{lem}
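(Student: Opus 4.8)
The plan is to reproduce verbatim the argument of Lemma~\ref{lem:1}, under the substitutions $\sigma\mapsto\theta$, $n\mapsto nh_n$, $\mathbb{H}_n^1\mapsto\mathbb{H}_n^2$, $\mathcal{Y}_1\mapsto\mathcal{Y}_2$, $\Gamma_1\mapsto\Gamma_2$, with Proposition~3.8 of Ogihara~\cite{ogi23} replaced by its drift counterpart Proposition~3.16. Writing $\theta_{u,n}=\hat\theta_n+u(\tilde\theta_n-\hat\theta_n)$, the quantity to analyze is
\[
\tilde\Gamma_{2,n}^1(\tilde\theta_n,\hat\theta_n)=-2\int_0^1\frac{1-u}{nh_n}\,\partial_\theta^2\mathbb{H}_n^2(\theta_{u,n})\,\dd u,
\]
and the target is $\Gamma_2=-\partial_\theta^2\mathcal{Y}_2(\theta_0)$, the identity recorded just before the partition of $\Gamma_2$ in the Preparations.

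First I would split the discrepancy by the triangle inequality and replace the rescaled Hessian of $\mathbb{H}_n^2$ by the Hessian of $\mathcal{Y}_2$. The uniform-in-$\theta$ convergence of $(nh_n)^{-1}\partial_\theta^2\mathbb{H}_n^2(\theta)$ to $\partial_\theta^2\mathcal{Y}_2(\theta)$ furnished by Proposition~3.16, together with $2\int_0^1(1-u)\dd u=1$, gives, exactly as in \eqref{lem:1-eq1},
\[
\left|{-}2\int_0^1\tfrac{1-u}{nh_n}\partial_\theta^2\mathbb{H}_n^2(\theta_{u,n})\dd u+2\int_0^1(1-u)\partial_\theta^2\mathcal{Y}_2(\theta_{u,n})\dd u\right|\le\sup_{\theta\in\Theta_2}\left|\tfrac{1}{nh_n}\partial_\theta^2\mathbb{H}_n^2(\theta)-\partial_\theta^2\mathcal{Y}_2(\theta)\right|\overset{P}{\rightarrow}0.
\]

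Second, I would compare the deterministic integral against $\Gamma_2$. Since $\partial_\theta^2\mathcal{Y}_2(\theta_0)=-\Gamma_2$ and $2\int_0^1(1-u)\dd u=1$, a first-order Taylor expansion of $\partial_\theta^2\mathcal{Y}_2$ about $\theta_0$ yields, with $|\theta_{u,n}-\theta_0|\le u|\tilde\theta_n-\theta_0|+(1-u)|\hat\theta_n-\theta_0|$,
\[
\left|{-}2\int_0^1(1-u)\partial_\theta^2\mathcal{Y}_2(\theta_{u,n})\dd u-\Gamma_2\right|\le K'\left(|\tilde\theta_n-\theta_0|+|\hat\theta_n-\theta_0|\right),
\]
where the existence of a constant $K'>0$ with $|\partial_\theta^3\mathcal{Y}_2(\theta)|\le K'$ follows from (A1) as in \eqref{lem:1-eq3}. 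Both $\hat\theta_n$ and the restricted estimator $\tilde\theta_n$ are consistent for $\theta_0$ under $H_0^{(\theta)}$ by Ogihara~\cite{ogi23}, so the right-hand side tends to $0$ in probability. Combining the two steps yields $\tilde\Gamma_{2,n}^1(\tilde\theta_n,\hat\theta_n)\overset{P}{\rightarrow}\Gamma_2$.

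The only points requiring care, and the reason all of (A1)--(A6) are invoked here whereas Lemma~\ref{lem:1} used only (A1)--(A4) and (A6), are that $\mathbb{H}_n^2$ is built from the plug-in $S_n^{-1}(\hat\sigma_n)$ and is normalized by $nh_n$ rather than $n$. Consequently the requisite uniform convergence of its \emph{second} derivative, and the existence and positive-definiteness of $\Gamma_2=-\partial_\theta^2\mathcal{Y}_2(\theta_0)$, rest on Proposition~3.16 and on the proof of Theorem~2.3 in Ogihara~\cite{ogi23}, which in turn invoke (A5). Granting these external inputs, the argument is structurally identical to that of Lemma~\ref{lem:1}, so I would omit the routine calculations; no genuine obstacle beyond bookkeeping of the normalization $nh_n$ is expected.
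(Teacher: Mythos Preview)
Your proposal is correct and follows essentially the same approach as the paper: the paper explicitly states that Lemma~\ref{lem:6} is proved in the same manner as Lemma~\ref{lem:1}, using Proposition~3.16 in Ogihara~\cite{ogi23} in place of Proposition~3.8 and the boundedness of $\partial_\theta^3\mathcal{Y}_2$ from (A1), and accordingly omits the proof. Your identification of why the full set (A1)--(A6) is needed here (the plug-in $\hat\sigma_n$ in $\mathbb{H}_n^2$ and the reliance on Proposition~3.16 and Theorem~2.3) is also accurate.
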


\begin{lem}\label{lem:7}
Assume conditions (A1) -- (A6). Then, under \(H_{0}^{(\theta)}\), 
$\tilde{\Gamma}_{2,n}^0 (\tilde{\theta}_{n}, \theta_{0})
\overset{P}{\rightarrow}\Gamma_{2}$, and $\tilde{\Gamma}_{2,n}^0 (\tilde{\theta}_{n}, \hat{\theta}_{n})
\overset{P}{\rightarrow}\Gamma_{2}$ as $n\to\infty$.
\end{lem}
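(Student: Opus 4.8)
The plan is to mirror the argument used for Lemma~\ref{lem:2}, replacing the role of Proposition~3.8 by Proposition~3.16 in Ogihara~\cite{ogi23}, the normalization $n^{-1}$ by $(nh_n)^{-1}$, and the limit object $\mathcal{Y}_1$ by $\mathcal{Y}_2$. Throughout I would rely on two facts recorded earlier in the excerpt: the identity $-\partial_\theta^2 \mathcal{Y}_2(\theta_0) = \Gamma_2$, and the boundedness of the derivatives $\partial_\theta^k \mathcal{Y}_2$ for $k\in\{0,1,2,3,4\}$, which follows from (A1) exactly as in \eqref{lem:1-eq3}.

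For the first convergence, I would set $\theta_{u,0,n} = \theta_0 + u(\tilde{\theta}_n - \theta_0)$, so that by definition $\tilde{\Gamma}_{2,n}^0(\tilde{\theta}_n, \theta_0) = -\int_0^1 (nh_n)^{-1}\partial_\theta^2 \mathbb{H}_n^2(\theta_{u,0,n})\,\dd u$. First I would invoke the uniform convergence of the second derivative supplied by Proposition~3.16,
\[
\sup_{\theta \in \Theta_2}\left|\frac{1}{nh_n}\partial_\theta^2 \mathbb{H}_n^2(\theta) - \partial_\theta^2 \mathcal{Y}_2(\theta)\right| \overset{P}{\rightarrow} 0,
\]
to replace the integrand by $-\int_0^1 \partial_\theta^2 \mathcal{Y}_2(\theta_{u,0,n})\,\dd u$ up to an error that is $o_P(1)$. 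Using $-\partial_\theta^2 \mathcal{Y}_2(\theta_0) = \Gamma_2$, the remaining deterministic discrepancy is controlled, exactly as in the treatment of \eqref{lem:1-eq2}, by a first-order expansion:
\[
\left|\int_0^1 \bigl(\partial_\theta^2 \mathcal{Y}_2(\theta_{u,0,n}) - \partial_\theta^2 \mathcal{Y}_2(\theta_0)\bigr)\,\dd u\right| \leq K''\,|\tilde{\theta}_n - \theta_0|,
\]
where $K''$ bounds $\partial_\theta^3 \mathcal{Y}_2$ along the segment. By the consistency of the restricted maximizer $\tilde{\theta}_n$ under $H_0^{(\theta)}$ the right-hand side is $o_P(1)$, which yields $\tilde{\Gamma}_{2,n}^0(\tilde{\theta}_n, \theta_0) \overset{P}{\rightarrow} \Gamma_2$.

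The second convergence follows by the identical scheme with $\theta_0$ replaced by $\hat{\theta}_n$: I would set $\theta_{u,n} = \hat{\theta}_n + u(\tilde{\theta}_n - \hat{\theta}_n)$, apply the same uniform convergence and Taylor bound, and close the estimate using $|\tilde{\theta}_n - \theta_0| + |\hat{\theta}_n - \theta_0| \overset{P}{\rightarrow} 0$. The two points where I would be most careful are, first, confirming that Proposition~3.16 genuinely delivers uniform convergence of the \emph{second} $\theta$-derivative of $(nh_n)^{-1}(\mathbb{H}_n^2(\theta) - \mathbb{H}_n^2(\theta_0))$, and not merely of the function values or its first derivative; and second, securing the consistency $\tilde{\theta}_n \overset{P}{\rightarrow} \theta_0$ of the constrained maximizer, which under $H_0^{(\theta)}$ reduces to the observation that $\theta_0 \in \overline{\Theta}_{0,2}$ together with the identifiability in (A6) and the uniform convergence toward the uniquely maximized limit $\mathcal{Y}_2$. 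Once these are in hand, the remainder is a verbatim transcription of Lemmas~\ref{lem:1}--\ref{lem:2}, so I anticipate no genuinely new difficulty.
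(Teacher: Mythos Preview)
Your proposal is correct and follows exactly the approach the paper indicates: the paper omits the proof, stating that Lemmas~\ref{lem:6} and~\ref{lem:7} are proved in the same manner as Lemmas~\ref{lem:1} and~\ref{lem:2}, using Proposition~3.16 in Ogihara~\cite{ogi23} and the boundedness of $\partial_\theta^3\mathcal{Y}_2$ from (A1). The two points you flag as needing care---uniform convergence of the second $\theta$-derivative and consistency of $\tilde{\theta}_n$ under $H_0^{(\theta)}$---are precisely the ingredients the paper takes for granted from Ogihara~\cite{ogi23}.
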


\if0
\begin{proof}
Define \(\theta_{u,n} = \hat{\theta}_{n} + u\bigl(\tilde{\theta}_n-\hat{\theta}_{n}\bigr)\). By Ogihara~\cite{ogi23} Proposition~3.16, we have
\begin{align*}
\Biggl|
2\int_{0}^{1} \frac{1-u}{nh_{n}}\,\partial^{2}_{\theta} \mathbb{H}_{n}^{2}(\theta_{u,n})\,\mathrm{d}u
-&2\int_{0}^{1} (1-u)\,\partial^{2}_{\theta} \mathcal{Y}_{2}(\theta_{u,n})\,\mathrm{d}u
\Biggr|\\[1mm]
&\le 2\int_{0}^{1} (1-u)\,\mathrm{d}u\cdot\sup_{\theta\in\Theta_{2}}\Biggl|\frac{1}{nh_{n}}\partial^{2}_{\theta} \mathbb{H}_{n}^{2}(\theta)-\partial^{2}_{\theta} \mathcal{Y}_{2}(\theta)\Biggr|\\[1mm]
&\overset{P}{\rightarrow}0.
\end{align*}
Similarly,
\begin{align*}
\Biggl|
-2\int_{0}^{1} (1-u)\,\partial^{2}_{\theta} \mathcal{Y}_{2}(\theta_{u,n})\,\mathrm{d}u
-\Gamma_{2}
\Biggr|
&=\Biggl|\int_{0}^{1}\partial^{2}_{\theta} \mathcal{Y}_{2}(\theta_{u,n})\,\mathrm{d}u
-\int_{0}^{1}\partial^{2}_{\theta} \mathcal{Y}_{2}(\theta_{0})\,\mathrm{d}u\Biggr|\\[1mm]
&=\Biggl|\int_{0}^{1}\Biggl(\int_{0}^{1}\partial^{3}_{\theta}\mathcal{Y}_{2}\Bigl(\theta_{0}+v\bigl(\theta_{u,n}-\theta_{0}\bigr)\Bigr)\mathrm{d}v\Biggr)(\theta_{u,n}-\theta_{0})\,\mathrm{d}u\Biggr|\\[1mm]
&\le 2K''\int_{0}^{1} (1-u)\Bigl(u\,\lvert\tilde{\theta}_{n}-\theta_{0}\rvert+(1-u)\,\lvert\hat{\theta}_{n}-\theta_{0}\rvert\Bigr)\mathrm{d}u\\[1mm]
&\le K''\Bigl(\lvert\tilde{\theta}_{n}-\theta_{0}\rvert+\lvert\hat{\theta}_{n}-\theta_{0}\rvert\Bigr)
\overset{P}{\rightarrow}0,
\end{align*}
where the bound
\[
\left|\partial^{k}_{\theta}\mathcal{Y}_{2}\Bigl(\theta_{0}+v\bigl(\theta_{u,n}-\theta_{0}\bigr)\Bigr)\right|\le K'',\quad k\in\{0,1,2,3,4\},
\]
follows from (A1). Thus, the result follows.
\end{proof}

\begin{proof}
Define \(\theta_{u,0,n} = \theta_{0} + u\bigl(\hat{\theta}_{n}-\theta_{0}\bigr)\). By an argument similar to that in the proof of Lemma~\ref{lem:6}, we have
\[
\left|
\int_{0}^{1}\frac{1}{nh_{n}}\,\partial^{2}_{\theta}\mathbb{H}_{n}^{2}(\theta_{u,0,n})\,\mathrm{d}u
-\int_{0}^{1}\partial^{2}_{\theta}\mathcal{Y}_{2}(\theta_{u,0,n})\,\mathrm{d}u
\right|
\overset{P}{\rightarrow}0,
\]
and
\begin{align*}
\Biggl|
-\int_{0}^{1}\partial^{2}_{\theta}\mathcal{Y}_{2}(\theta_{u,0,n})\,\mathrm{d}u
-\Gamma_{2}
\Biggr|
&=\Biggl|\int_{0}^{1}\partial^{2}_{\theta}\mathcal{Y}_{2}(\theta_{u,0,n})\,\mathrm{d}u
-\int_{0}^{1}\partial^{2}_{\theta}\mathcal{Y}_{2}(\theta_{0})\,\mathrm{d}u\Biggr|\\[1mm]
&=\Biggl|\int_{0}^{1}\Biggl(\int_{0}^{1}\partial^{3}_{\theta}\mathcal{Y}_{2}\Bigl(\theta_{0}+v\bigl(\theta_{u,0,n}-\theta_{0}\bigr)\Bigr)\,\mathrm{d}v\Biggr)(\theta_{u,0,n}-\theta_{0})\,\mathrm{d}u\Biggr|\\[1mm]
&\le K''\,\lvert\hat{\theta}_{n}-\theta_{0}\rvert
\overset{P}{\rightarrow}0.
\end{align*}
Thus, the desired convergence follows for \(\tilde{\Gamma}_{2,n}^0(\tilde{\theta}_{n}, \theta_{0})\), and a similar argument applies to \(\tilde{\Gamma}_{2,n}^0(\tilde{\theta}_{n}, \hat{\theta}_{n})\).
\end{proof}
\fi

For \(1\leq r_2<d_2\), define the projection \(\tilde{P}_{0}:\mathbb{R}^{d_{2}} \to \mathbb{R}^{d_{2}-r_{2}}\) onto the last \(d_{2}-r_{2}\) components by
\[
\tilde{P}_{0}
\begin{pmatrix}
\theta^{(1)} \\ \theta^{(2)} \\ \vdots \\ \theta^{(d_{2})}
\end{pmatrix}
=
\begin{pmatrix}
\theta^{(r_{2}+1)} \\ \vdots \\ \theta^{(d_{2})}
\end{pmatrix}.
\]

\begin{lem}\label{lem:8}
Assume \(1\leq r_2<d_2\) and conditions (A1) -- (A6). Under \(H_{0}^{(\theta)}\), provided that \(\tilde{\theta}_n\in \Theta_{2}\), we have
\[
\frac{1}{\sqrt{nh_n}}\partial_{\tilde{P}_{0}\theta}\mathbb{H}_{n}^{2}(\theta_{0})
=\tilde{\Gamma}_{2,3,n}^0 (\tilde{\theta}_{n}, \theta_{0})\,\sqrt{nh_n}\,\tilde{P}_{0}\bigl(\tilde{\theta}_{n}-\theta_{0}\bigr).
\]
\begin{proof}
Under \(H_{0}^{(\theta)}\), note that the first \(r_2\) components of \(\tilde{\theta}_n\) and \(\theta_{0}\) coincide. Expanding in \(\tilde{P}_{0}\theta\) yields
\begin{equation}\label{eq:6}
\frac{1}{\sqrt{nh_n}}\partial_{\tilde{P}_{0}\theta}\mathbb{H}_{n}^{2}(\tilde{\theta}_{n})
-\frac{1}{\sqrt{nh_n}}\partial_{\tilde{P}_{0}\theta}\mathbb{H}_{n}^{2}(\theta_{0})
=-\tilde{\Gamma}_{2,3,n}^0 (\tilde{\theta}_{n},\theta_{0})\,\sqrt{nh_n}\,\tilde{P}_{0}\bigl(\tilde{\theta}_{n}-\theta_{0}\bigr).
\end{equation}
Since \(\tilde{\theta}_{n}\in\Theta_{2}\) implies \(\partial_{\tilde{P}_{0}\theta}\mathbb{H}_{n}^{2}(\tilde{\theta}_{n})=0\), the conclusion follows.
\end{proof}
\end{lem}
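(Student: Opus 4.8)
The plan is to transcribe the argument of Lemma~\ref{lem:4} to the drift setting, replacing the rate $n$ by $nh_n$ and every $\sigma$-object by its $\theta$-counterpart. The single structural fact that makes the argument go through is that, under $H_0^{(\theta)}$, the first $r_2$ coordinates of $\theta_0$ and of the restricted maximizer $\tilde{\theta}_n\in\overline{\Theta}_{0,2}$ both vanish; consequently the increment $\tilde{\theta}_n-\theta_0$ is supported on the last $d_2-r_2$ coordinates, i.e. it lies in the range of $\tilde{P}_0$.

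First I would apply the integral form of Taylor's theorem to the vector field $\theta\mapsto\partial_{\tilde{P}_0\theta}\mathbb{H}_n^2(\theta)$ along the segment joining $\theta_0$ to $\tilde{\theta}_n$. Because this segment moves only in the $\tilde{P}_0$-coordinates, the sole Hessian block that enters is $\partial^2_{\tilde{P}_0\theta}\mathbb{H}_n^2$, giving
\[
\partial_{\tilde{P}_0\theta}\mathbb{H}_n^2(\tilde{\theta}_n)-\partial_{\tilde{P}_0\theta}\mathbb{H}_n^2(\theta_0)=\left(\int_0^1\partial^2_{\tilde{P}_0\theta}\mathbb{H}_n^2\bigl(\theta_0+u(\tilde{\theta}_n-\theta_0)\bigr)\,\mathrm{d}u\right)\tilde{P}_0\bigl(\tilde{\theta}_n-\theta_0\bigr).
\]
By the very definition of $\tilde{\Gamma}_{2,n}^0(x,y)$ and its $(3)$-block, the integral on the right, once divided by $-nh_n$, equals $\tilde{\Gamma}_{2,3,n}^0(\tilde{\theta}_n,\theta_0)$; dividing through by $\sqrt{nh_n}$ then reproduces display~\eqref{eq:6}.

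The final step is the first-order optimality condition. Since $\tilde{\theta}_n$ maximizes $\mathbb{H}_n^2$ over $\overline{\Theta}_{0,2}$ and is assumed to lie in $\Theta_2$, it is an interior stationary point in the unconstrained $\tilde{P}_0$-directions, so $\partial_{\tilde{P}_0\theta}\mathbb{H}_n^2(\tilde{\theta}_n)=0$; substituting this into \eqref{eq:6} and rearranging yields the claim. I do not expect a genuine obstacle, as the proof is a direct analogue of Lemma~\ref{lem:4}; the only delicate points are (i) verifying that $\tilde{\theta}_n-\theta_0$ is carried by the $\tilde{P}_0$-coordinates, which is precisely what collapses the expansion to the single Hessian block, and (ii) the interiority hypothesis $\tilde{\theta}_n\in\Theta_2$, which is exactly what guarantees the vanishing of the restricted gradient rather than a mere boundary inequality.
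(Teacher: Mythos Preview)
Your proposal is correct and follows essentially the same approach as the paper: a Taylor expansion of $\partial_{\tilde{P}_0\theta}\mathbb{H}_n^2$ along the segment from $\theta_0$ to $\tilde{\theta}_n$ (which moves only in the last $d_2-r_2$ coordinates under $H_0^{(\theta)}$), identification of the integrated Hessian block with $-\tilde{\Gamma}_{2,3,n}^0(\tilde{\theta}_n,\theta_0)$, and the first-order condition $\partial_{\tilde{P}_0\theta}\mathbb{H}_n^2(\tilde{\theta}_n)=0$ from interiority. Your write-up is slightly more explicit about why only the $(3)$-block of the Hessian appears, but the argument is the same.
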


\begin{lem}\label{lem:9}
Assume \(1\le r_2<d_2\) and let \(Z\sim\mathcal{N}(0, \mathcal{E}_{d_{2}})\). Under conditions (A1) -- (A6) and \(H_{0}^{(\theta)}\),
\[
\sqrt{nh_{n}}\bigl(\tilde{\theta}_{n} - \hat{\theta}_{n}\bigr)
\overset{d}{\rightarrow} -\Gamma_{2}^{-1}\Bigl(\mathcal{E}_{d_{2}}-\Gamma_{2}\Lambda_{2}\Bigr)\Gamma_{2}^{1/2}Z, \quad (n\to\infty).
\]
\begin{proof}
A Taylor expansion in \(\theta\) yields
\begin{equation}\label{lem:9-eq1}
\begin{split}
\frac{1}{\sqrt{nh_{n}}}\partial_{\theta}\mathbb{H}_{n}^{2}(\tilde{\theta}_{n})
-\frac{1}{\sqrt{nh_{n}}}\partial_{\theta}\mathbb{H}_{n}^{2}(\theta_{0})
=-\tilde{\Gamma}_{2,n}^0 (\tilde{\theta}_{n},\theta_{0})\,\sqrt{nh_{n}}\bigl(\tilde{\theta}_{n}-\theta_{0}\bigr).
\end{split}
\end{equation}
Define
\[
\tilde{\Lambda}_{2,n}(\tilde{\theta}_{n},\theta_{0}) =
\begin{pmatrix}
O & O \\[2mm]
O & (\tilde{\Gamma}_{2,3,n}^0)^{-1}(\tilde{\theta}_{n},\theta_{0})
\end{pmatrix}\mathbf{1}_{\{\det \tilde{\Gamma}_{2,3,n}^0 (\tilde{\theta}_{n},\theta_{0})>0\}}.
\]
Then, by Lemma~\ref{lem:8},
\[
\sqrt{nh_{n}}\bigl(\tilde{\theta}_{n}-\theta_{0}\bigr)
=\tilde{\Lambda}_{2,n}(\tilde{\theta}_{n},\theta_{0})\frac{1}{\sqrt{nh_{n}}}\partial_{\theta}\mathbb{H}_{n}^{2}(\theta_{0})
\quad \text{on } \{\det \tilde{\Gamma}_{2,3,n}^0 (\tilde{\theta}_{n},\theta_{0})>0\}.
\]
Thus, combining with \eqref{lem:9-eq1},
\[
\frac{1}{\sqrt{nh_{n}}}\partial_{\theta}\mathbb{H}_{n}^{2}(\tilde{\theta}_{n})
-\frac{1}{\sqrt{nh_{n}}}\partial_{\theta}\mathbb{H}_{n}^{2}(\theta_{0})
=-\tilde{\Gamma}_{2,n}^0 (\tilde{\theta}_{n},\theta_{0})
\tilde{\Lambda}_{2,n} (\tilde{\theta}_{n},\theta_{0})\frac{1}{\sqrt{nh_{n}}}\partial_{\theta}\mathbb{H}_{n}^{2}(\theta_{0})
\]
on \(\{\det \tilde{\Gamma}_{2,3,n}^0 (\tilde{\theta}_{n},\theta_{0})>0\}\). Hence,
\[
\frac{1}{\sqrt{nh_{n}}}\partial_{\theta}\mathbb{H}_{n}^{2}(\tilde{\theta}_{n})
=\Bigl(\mathcal{E}_{d_{2}}-\tilde{\Gamma}_{2,n}^0 (\tilde{\theta}_{n},\theta_{0})
\tilde{\Lambda}_{2,n}(\tilde{\theta}_{n},\theta_{0})\Bigr)\frac{1}{\sqrt{nh_{n}}}\partial_{\theta}\mathbb{H}_{n}^{2}(\theta_{0})
\quad \text{on } \{\det \tilde{\Gamma}_{2,3,n}^0 (\tilde{\theta}_{n},\theta_{0})>0\}.
\]
By Lemma~\ref{lem:7} and the positive definiteness of \(\Gamma_{2}\), we have
$P\Bigl(\det \tilde{\Gamma}_{2,3,n}^0 (\tilde{\theta}_{n},\theta_{0})\le 0\Bigr)\to 0 $,
which implies
$\tilde{\Lambda}_{2,n}(\tilde{\theta}_{n},\theta_{0})\overset{P}{\rightarrow}\Lambda_{2}$.
Furthermore, by (3.59) in Ogihara~\cite{ogi23},
$(nh_{n})^{-1/2}\partial_{\theta}\mathbb{H}_{n}^{2}(\theta_{0})
\overset{d}{\rightarrow}\mathcal{N}(0,\Gamma_{2})$.
Thus, letting \(Z\sim\mathcal{N}(0,\mathcal{E}_{d_{2}})\),
\begin{equation}\label{eq:7}
\frac{1}{\sqrt{nh_{n}}}\partial_{\theta}\mathbb{H}_{n}^{2}(\tilde{\theta}_{n})
\overset{d}{\rightarrow} (\mathcal{E}_{d_{2}}-\Gamma_{2}\Lambda_{2})\Gamma_{2}^{1/2}Z.
\end{equation}
On the other hand, by the consistency of \(\hat{\theta}_{n}\) (so that for sufficiently large \(n\), \(\hat{\theta}_{n}\in\Theta_{2}\)) we have \(\partial_{\theta}\mathbb{H}_{n}^{2}(\hat{\theta}_{n})=0\). Then, by a Taylor expansion,
\begin{align*}
\frac{1}{\sqrt{nh_{n}}}\partial_{\theta}\mathbb{H}_{n}^{2}(\tilde{\theta}_{n})
-\frac{1}{\sqrt{nh_{n}}}\partial_{\theta}\mathbb{H}_{n}^{2}(\hat{\theta}_{n})
=-\tilde{\Gamma}_{2,n}^0 (\tilde{\theta}_{n},\hat{\theta}_{n})\,\sqrt{nh_{n}}\bigl(\tilde{\theta}_{n}-\hat{\theta}_{n}\bigr).
\end{align*}
Hence,
\begin{equation}\label{eq:8}
\sqrt{nh_{n}}\bigl(\tilde{\theta}_{n}-\hat{\theta}_{n}\bigr)
=-(\tilde{\Gamma}_{2,n}^0)^{-1}(\tilde{\theta}_{n},\hat{\theta}_{n})\frac{1}{\sqrt{nh_{n}}}\partial_{\theta}\mathbb{H}_{n}^{2}(\tilde{\theta}_{n})
\quad \text{on } \{\det \tilde{\Gamma}_{2,n}^0 (\tilde{\theta}_{n},\hat{\theta}_{n})>0\}.
\end{equation}
Together with Lemma~\ref{lem:7}, \eqref{eq:7} and \eqref{eq:8}, we have
\[
\sqrt{nh_{n}}\bigl(\tilde{\theta}_{n}-\hat{\theta}_{n}\bigr)
\overset{d}{\rightarrow} -\Gamma_{2}^{-1}\Bigl(\mathcal{E}_{d_{2}}-\Gamma_{2}\Lambda_{2}\Bigr)\Gamma_{2}^{1/2}Z, \quad (n\to\infty).
\]
\end{proof}
\end{lem}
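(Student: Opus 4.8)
The plan is to reproduce, mutatis mutandis, the argument used for the diffusion parameter in Lemma~\ref{lem:5}, replacing $\sigma$ by $\theta$, the normalizing rate $\sqrt{n}$ by $\sqrt{nh_n}$, and invoking the $\theta$-side ingredients (Lemmas~\ref{lem:7} and~\ref{lem:8} together with the drift-score central limit theorem) in place of their $\sigma$-counterparts. The target quantity $\sqrt{nh_n}(\tilde{\theta}_n - \hat{\theta}_n)$ will be produced through two successive Taylor expansions of the score $\partial_\theta \mathbb{H}_n^2$.

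First I would expand $\partial_\theta \mathbb{H}_n^2$ between $\theta_0$ and $\tilde{\theta}_n$, which introduces the matrix $\tilde{\Gamma}_{2,n}^0(\tilde{\theta}_n, \theta_0)$ and gives $(nh_n)^{-1/2}(\partial_\theta \mathbb{H}_n^2(\tilde{\theta}_n) - \partial_\theta \mathbb{H}_n^2(\theta_0)) = -\tilde{\Gamma}_{2,n}^0(\tilde{\theta}_n, \theta_0)\sqrt{nh_n}(\tilde{\theta}_n - \theta_0)$. Under $H_0^{(\theta)}$ the first $r_2$ coordinates of $\tilde{\theta}_n$ are pinned to zero, so Lemma~\ref{lem:8} (valid since consistency of $\tilde{\theta}_n$ gives $P(\tilde{\theta}_n \in \Theta_2) \to 1$) lets me write $\sqrt{nh_n}(\tilde{\theta}_n - \theta_0) = \tilde{\Lambda}_{2,n}(\tilde{\theta}_n, \theta_0)(nh_n)^{-1/2}\partial_\theta \mathbb{H}_n^2(\theta_0)$ on the event $\{\det \tilde{\Gamma}_{2,3,n}^0 > 0\}$, where $\tilde{\Lambda}_{2,n}$ inverts only the nuisance block. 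Substituting this back expresses the score at $\tilde{\theta}_n$ as $(\mathcal{E}_{d_2} - \tilde{\Gamma}_{2,n}^0 \tilde{\Lambda}_{2,n})$ applied to the score at $\theta_0$.

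Next I would pass to the limit. By Lemma~\ref{lem:7} and the positive definiteness of $\Gamma_2$, the events $\{\det \tilde{\Gamma}_{2,3,n}^0 \le 0\}$ are asymptotically negligible and $\tilde{\Lambda}_{2,n} \overset{P}{\rightarrow} \Lambda_2$, $\tilde{\Gamma}_{2,n}^0 \overset{P}{\rightarrow} \Gamma_2$; combined with the drift-score central limit theorem (3.59) in Ogihara~\cite{ogi23}, namely $(nh_n)^{-1/2}\partial_\theta \mathbb{H}_n^2(\theta_0) \overset{d}{\rightarrow} \mathcal{N}(0, \Gamma_2)$, and Slutsky's theorem, this yields $(nh_n)^{-1/2}\partial_\theta \mathbb{H}_n^2(\tilde{\theta}_n) \overset{d}{\rightarrow} (\mathcal{E}_{d_2} - \Gamma_2 \Lambda_2)\Gamma_2^{1/2}Z$. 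A second Taylor expansion, now between $\hat{\theta}_n$ and $\tilde{\theta}_n$ and using $\partial_\theta \mathbb{H}_n^2(\hat{\theta}_n) = 0$ (which holds with probability tending to one by consistency of $\hat{\theta}_n$), gives $\sqrt{nh_n}(\tilde{\theta}_n - \hat{\theta}_n) = -(\tilde{\Gamma}_{2,n}^0)^{-1}(\tilde{\theta}_n, \hat{\theta}_n)(nh_n)^{-1/2}\partial_\theta \mathbb{H}_n^2(\tilde{\theta}_n)$, and a final application of Lemma~\ref{lem:7} (giving $(\tilde{\Gamma}_{2,n}^0)^{-1} \overset{P}{\rightarrow} \Gamma_2^{-1}$ off a negligible event) with Slutsky's theorem delivers the claimed limit $-\Gamma_2^{-1}(\mathcal{E}_{d_2} - \Gamma_2 \Lambda_2)\Gamma_2^{1/2}Z$.

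Since the structure is identical to Lemma~\ref{lem:5}, I do not anticipate a genuine obstacle; the only delicate bookkeeping is the restriction to the events where $\tilde{\Gamma}_{2,3,n}^0$ and $\tilde{\Gamma}_{2,n}^0$ are invertible, together with verifying that the indicator factors $\mathbf{1}_{\{\det \cdots > 0\}}$ do not disturb the weak convergence. This is exactly the point at which Lemma~\ref{lem:7} and the positive definiteness of $\Gamma_2$ enter, ensuring that the regularized inverses converge in probability to the genuine inverses and that the truncating indicators tend to one in probability.
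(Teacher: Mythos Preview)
Your proposal is correct and follows essentially the same approach as the paper: both proceed by two Taylor expansions of $\partial_\theta\mathbb{H}_n^2$ (first between $\theta_0$ and $\tilde{\theta}_n$, then between $\hat{\theta}_n$ and $\tilde{\theta}_n$), invoke Lemma~\ref{lem:8} to identify $\sqrt{nh_n}(\tilde{\theta}_n-\theta_0)$ via the block inverse $\tilde{\Lambda}_{2,n}$, and combine Lemma~\ref{lem:7}, the score CLT (3.59) in Ogihara~\cite{ogi23}, and Slutsky's theorem to pass to the limit.
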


\subsubsection*{Proof of Theorem \ref{thm:theta0}}
By Taylor's theorem,
\begin{align*}
\mathbb{H}_{n}^{2}(\tilde{\theta}_{n})
&=\mathbb{H}_{n}^{2}(\hat{\theta}_{n})
+\partial_{\theta}\mathbb{H}_{n}^{2}(\hat{\theta}_{n})\bigl(\tilde{\theta}_{n}-\hat{\theta}_{n}\bigr) 
-\frac{1}{2}\tilde{\Gamma}_{2,n}^1(\tilde{\theta}_n, \hat{\theta}_n)
\Bigl\llbracket\Bigl[\sqrt{nh_{n}}\bigl(\tilde{\theta}_{n}-\hat{\theta}_{n}\bigr)\Bigr]^{\otimes 2}\Bigr\rrbracket.
\end{align*}
By the consistency of \(\hat{\theta}_{n}\) (i.e., \(P(\hat{\theta}_{n}\in\Theta_{2})\to1\)), we have \(\partial_{\theta}\mathbb{H}_{n}^{2}(\hat{\theta}_{n})=0\). Thus,
\begin{equation}\label{eq:9}
\mathbb{H}_{n}^{2}(\tilde{\theta}_{n})-\mathbb{H}_{n}^{2}(\hat{\theta}_{n})
=-\frac{1}{2}\tilde{\Gamma}_{2,n}^1(\tilde{\theta}_n, \hat{\theta}_n)
\Bigl\llbracket\Bigl[\sqrt{nh_{n}}\bigl(\tilde{\theta}_{n}-\hat{\theta}_{n}\bigr)\Bigr]^{\otimes 2}\Bigr\rrbracket.
\end{equation}
First, consider the case \(r_{2}=d_{2}\). Then \(\tilde{\theta}_{n}=\theta_{0}=0\). Let \(Z\sim\mathcal{N}(0,\mathcal{E}_{d_{2}})\). By Ogihara~\cite{ogi23} Theorem~2.3,
$\sqrt{nh_{n}}(\hat{\theta}_{n}-\theta_{0})
\overset{d}{\rightarrow}\Gamma_{2}^{-1/2}Z$,
so that
\begin{equation}\label{eq:10}
\sqrt{nh_{n}}(\tilde{\theta}_{n}-\hat{\theta}_{n})
\overset{d}{\rightarrow}\Gamma_{2}^{-1/2}Z.
\end{equation}
Then, by \eqref{eq:9}, \eqref{eq:10}, Lemma~\ref{lem:6}, and Slutsky\textquotesingle s theorem,
\[
T_{n}^{2}=2\Bigl(\mathbb{H}_{n}^{2}(\hat{\theta}_{n})-\mathbb{H}_{n}^{2}(\tilde{\theta}_{n})\Bigr)
\overset{d}{\rightarrow} Z^{\top}\Gamma_{2}^{-1/2}\Gamma_{2}\Gamma_{2}^{-1/2}Z
\sim \chi^{2}_{d_{2}}=\chi^{2}_{r_{2}}.
\]

Next, consider the case \(1\le r_{2} < d_{2}\). Using \eqref{eq:9} together with Lemmas~\ref{lem:6} and \ref{lem:9} and applying Slutsky\textquotesingle s theorem, we obtain
\begin{align*}
T_{n}^{2} &= -2\Bigl(\mathbb{H}_{n}^{2}(\tilde{\theta}_{n})-\mathbb{H}_{n}^{2}(\hat{\theta}_{n})\Bigr)\\[1mm]
&=\tilde{\Gamma}_{2,n}^1(\tilde{\theta}_n, \hat{\theta}_n)
\Bigl\llbracket\Bigl[\sqrt{nh_{n}}\bigl(\tilde{\theta}_{n}-\hat{\theta}_{n}\bigr)\Bigr]^{\otimes 2}\Bigr\rrbracket\\[1mm]
&\overset{d}{\rightarrow} Z^{\top}\Gamma^{1/2}_{2}\Bigl(\mathcal{E}_{d_{2}}-\Gamma_{2}\Lambda_{2}\Bigr)^{\top}\Gamma_{2}^{-1}\Bigl(\mathcal{E}_{d_{2}}-\Gamma_{2}\Lambda_{2}\Bigr)\Gamma^{1/2}_{2}Z\\[1mm]
&= Z^{\top}\Gamma^{1/2}_{2}\Bigl(\Gamma_{2}^{-1}-\Lambda_{2}\Bigr)\Gamma^{1/2}_{2}Z.
\end{align*}
Note that
\[
\Lambda_{2}\Gamma_{2}\Lambda_{2}=
\begin{pmatrix}
O & O \\[2mm]
O & (\Gamma_{2,3})^{-1}
\end{pmatrix}
\begin{pmatrix*}[c]
\Gamma_{2,1} & \Gamma_{2,2} \\[2mm]
\Gamma_{2,2}^{\top} & \Gamma_{2,3}
\end{pmatrix*}
\begin{pmatrix}
O & O \\[2mm]
O & (\Gamma_{2,3})^{-1}
\end{pmatrix}
=\Lambda_{2}.
\]
Also, since
\begin{align*}
\Gamma_{2}^{1/2}\Bigl(\Gamma_{2}^{-1}-\Lambda_{2}\Bigr)\Gamma_{2}^{1/2}\Gamma_{2}^{1/2}\Bigl(\Gamma_{2}^{-1}-\Lambda_{2}\Bigr)\Gamma_{2}^{1/2}
&=\Gamma_{2}^{1/2}\Bigl(\Gamma_{2}^{-1}-\Lambda_{2}\Bigr)(\mathcal{E}_{d_{2}}-\Gamma_{2}\Lambda_{2})\Gamma_{2}^{1/2}\\[1mm]
&=\Gamma_{2}^{1/2}\Bigl(\Gamma_{2}^{-1}-\Lambda_{2}\Bigr)\Gamma_{2}^{1/2},
\end{align*}
it follows that \(\Gamma_{2}^{1/2}(\Gamma_{2}^{-1}-\Lambda_{2})\Gamma_{2}^{1/2}\) is a projection matrix whose rank equals its trace:
\begin{align*}
\mathrm{tr}\Bigl\{\Gamma_{2}^{1/2}(\Gamma_{2}^{-1}-\Lambda_{2})\Gamma_{2}^{1/2}\Bigr\}
=\mathrm{tr}\Bigl(\mathcal{E}_{d_{2}}-\Gamma_{2}\Lambda_{2}\Bigr)
=\mathrm{tr}\begin{pmatrix*}[c]
\mathcal{E}_{r_{2}} & * \\[2mm]
O & O
\end{pmatrix*}
=r_{2},
\end{align*}
which leads to the conclusion.

\qed

\begin{lem}\label{lem:10}
Assume (A1) -- (A6). Then,
\[
\sup_{\theta\in\Theta_{2}}\left|\frac{1}{nh_{n}}\Bigl(\mathbb{H}_{n}^{2}(\theta)-\mathbb{H}_{n}^{2}(\hat{\theta}_{n})\Bigr)-\mathcal{Y}_{2}(\theta)\right|
\overset{P}{\rightarrow}0, \quad (n\to\infty).
\]
\begin{proof}
We write
\begin{align*}
&\left|\frac{1}{nh_{n}}\Bigl(\mathbb{H}_{n}^{2}(\theta)-\mathbb{H}_{n}^{2}(\hat{\theta}_{n})\Bigr)-\mathcal{Y}_{2}(\theta)\right|\\[1mm]
&\quad=\Biggl|\frac{1}{nh_{n}}\Bigl(\mathbb{H}_{n}^{2}(\theta)-\mathbb{H}_{n}^{2}(\theta_{0})\Bigr)-\mathcal{Y}_{2}(\theta)
-\frac{1}{nh_{n}}\Bigl(\mathbb{H}_{n}^{2}(\hat{\theta}_{n})-\mathbb{H}_{n}^{2}(\theta_{0})\Bigr)
+\mathcal{Y}_{2}(\hat{\theta}_{n})-\mathcal{Y}_{2}(\hat{\theta}_{n})\Biggr|\\[1mm]
&\quad\le \left|\frac{1}{nh_{n}}\Bigl(\mathbb{H}_{n}^{2}(\theta)-\mathbb{H}_{n}^{2}(\theta_{0})\Bigr)-\mathcal{Y}_{2}(\theta)\right|
+\left|\frac{1}{nh_{n}}\Bigl(\mathbb{H}_{n}^{2}(\hat{\theta}_{n})-\mathbb{H}_{n}^{2}(\theta_{0})\Bigr)-\mathcal{Y}_{2}(\hat{\theta}_{n})\right|
+\left|\mathcal{Y}_{2}(\hat{\theta}_{n})\right|.
\end{align*}
By Ogihara~\cite{ogi23} Proposition~3.16,
\[
\sup_{\theta\in\Theta_{2}}\left|\frac{1}{nh_{n}}\Bigl(\mathbb{H}_{n}^{2}(\theta)-\mathbb{H}_{n}^{2}(\theta_{0})\Bigr)-\mathcal{Y}_{2}(\theta)\right|
\overset{P}{\rightarrow}0.
\]
Thus, it suffices to show that
$-\mathcal{Y}_{2}(\hat{\theta}_{n})\overset{P}{\rightarrow}0$.
Noting that \(\mathcal{Y}_{2}(\theta_{0})=0\) and using the boundedness of the derivatives of \(\mathcal{Y}_{2}\) following from (A1) together with the consistency of \(\hat{\theta}_n\), we have
\begin{align*}
0\le -\mathcal{Y}_{2}(\hat{\theta}_{n})
&=\left|\mathcal{Y}_{2}(\hat{\theta}_{n})-\mathcal{Y}_{2}(\theta_{0})\right|\\[1mm]
&=\left|\left(\int_{0}^{1}\partial_{\theta}\mathcal{Y}_{2}\Bigl(\theta_{0}+u\bigl(\hat{\theta}_{n}-\theta_{0}\bigr)\Bigr)\mathrm{d}u\right)
\bigl(\hat{\theta}_{n}-\theta_{0}\bigr)\right|\\[1mm]
&\le K''\lvert\hat{\theta}_{n}-\theta_{0}\rvert
\overset{P}{\rightarrow}0.
\end{align*}
This completes the proof.
\end{proof}
\end{lem}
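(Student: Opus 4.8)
The plan is to mirror the structure of the proof of Lemma~\ref{lem:3}, replacing $\mathbb{H}_n^1$, $\mathcal{Y}_1$, $\hat{\sigma}_n$, $\sigma_0$, and the normalization $n^{-1}$ by their drift counterparts $\mathbb{H}_n^2$, $\mathcal{Y}_2$, $\hat{\theta}_n$, $\theta_0$, and $(nh_n)^{-1}$. The central device is the decomposition, valid for each $\theta\in\Theta_2$,
\[
\frac{1}{nh_n}\bigl(\mathbb{H}_n^2(\theta)-\mathbb{H}_n^2(\hat{\theta}_n)\bigr)-\mathcal{Y}_2(\theta)
= A_n(\theta)-A_n(\hat{\theta}_n)-\mathcal{Y}_2(\hat{\theta}_n),
\]
where $A_n(\theta):=(nh_n)^{-1}\bigl(\mathbb{H}_n^2(\theta)-\mathbb{H}_n^2(\theta_0)\bigr)-\mathcal{Y}_2(\theta)$. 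Taking the supremum over $\theta\in\Theta_2$ and applying the triangle inequality bounds the left-hand side by $2\sup_{\theta\in\Theta_2}|A_n(\theta)|+|\mathcal{Y}_2(\hat{\theta}_n)|$; the point is that the middle term $A_n(\hat{\theta}_n)$, although evaluated at the random estimator, does not depend on the running variable $\theta$ and is therefore dominated by the very same supremum.

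First I would invoke Proposition~3.16 of Ogihara~\cite{ogi23}, which gives precisely $\sup_{\theta\in\Theta_2}|A_n(\theta)|\overset{P}{\rightarrow}0$; this simultaneously controls the $A_n(\theta)$ and $A_n(\hat{\theta}_n)$ contributions. It then remains to show $\mathcal{Y}_2(\hat{\theta}_n)\overset{P}{\rightarrow}0$. Here I would use that $\mathcal{Y}_2(\theta_0)=0$ together with the boundedness of $\partial_\theta\mathcal{Y}_2$ on $\overline{\Theta}_2$, which follows from (A1) in the same way that \eqref{lem:1-eq3} was obtained in the proof of Lemma~\ref{lem:1}; thus there is a constant $K''>0$ with $|\partial_\theta^k\mathcal{Y}_2|\le K''$ for $k\in\{0,1,2,3,4\}$. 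A first-order Taylor expansion along the segment from $\theta_0$ to $\hat{\theta}_n$ then yields
\[
|\mathcal{Y}_2(\hat{\theta}_n)| = |\mathcal{Y}_2(\hat{\theta}_n)-\mathcal{Y}_2(\theta_0)| \le K''\,|\hat{\theta}_n-\theta_0|,
\]
and the consistency of $\hat{\theta}_n$ established in Ogihara~\cite{ogi23} forces the right-hand side to $0$ in probability.

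I do not anticipate a serious obstacle, since the argument is essentially routine once the decomposition is in place; the only point requiring care is the observation that evaluating $A_n$ at the data-dependent point $\hat{\theta}_n$ is legitimate precisely because $\hat{\theta}_n$ is free of the variable $\theta$ over which the supremum is taken, so the uniform-in-$\theta$ control from Proposition~3.16 applies verbatim rather than demanding a fresh stochastic-equicontinuity estimate. The identity $0\le-\mathcal{Y}_2(\hat{\theta}_n)$, reflecting that $\theta_0$ maximizes $\mathcal{Y}_2$, is a convenient bookkeeping remark but is not strictly needed, as the absolute-value bound above already suffices to close the argument.
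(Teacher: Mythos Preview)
Your proposal is correct and follows essentially the same route as the paper: the same add-and-subtract decomposition around $\theta_0$, the same appeal to Proposition~3.16 of Ogihara~\cite{ogi23} to control both $A_n(\theta)$ and $A_n(\hat{\theta}_n)$ uniformly, and the same Taylor-expansion argument using bounded $\partial_\theta\mathcal{Y}_2$ and consistency of $\hat{\theta}_n$ to kill $|\mathcal{Y}_2(\hat{\theta}_n)|$. Your remark that the inequality $0\le -\mathcal{Y}_2(\hat{\theta}_n)$ is not strictly needed is also accurate.
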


\subsubsection*{Proof of Theorem \ref{thm:theta1}}
By Ogihara~\cite{ogi23} (3.56) and (A6), there exists a positive constant \(c\) such that
$\mathcal{Y}_{2}(\theta) \le -c\,\lvert\theta-\theta_{0}\rvert^{2}$.
Under \(H_{1}^{(\theta)}\) there exists some \(\ell_{2}\in\{1,\dots,r_{2}\}\) for which \(\theta_{0}^{(\ell_{2})}\neq 0\). Hence,
\[
\frac{1}{2nh_{n}}T_{n}^{2} + \left|\frac{1}{2nh_{n}}T_{n}^{2} + \mathcal{Y}_{2}(\tilde{\theta}_{n})\right|
\ge -\mathcal{Y}_{2}(\tilde{\theta}_{n})
\ge c\,\lvert\tilde{\theta}_{n}-\theta_{0}\rvert^{2}
\ge c\,\lvert\theta_{0}^{(\ell_{2})}\rvert^{2} > 0.
\]
Thus,
\begin{align*}
P\bigl(T_{n}^{2} \le M\bigr)
&= P\Biggl(\frac{1}{2nh_{n}}T_{n}^{2} + \left|\frac{1}{2nh_{n}}T_{n}^{2} + \mathcal{Y}_{2}(\tilde{\theta}_{n})\right|
\le \frac{M}{2nh_{n}} + \left|\frac{1}{2nh_{n}}T_{n}^{2} + \mathcal{Y}_{2}(\tilde{\theta}_{n})\right|\Biggr)\\[1mm]
&\le P\Biggl(c\,\lvert\theta_{0}^{(\ell_{2})}\rvert^{2}
\le \frac{M}{2nh_{n}} + \Biggl|\frac{1}{nh_{n}}\Bigl(\mathbb{H}_{n}^{2}(\hat{\theta}_{n}) - \mathbb{H}_{n}^{2}(\tilde{\theta}_{n})\Bigr) + \mathcal{Y}_{2}(\tilde{\theta}_{n})\Biggr|\Biggr)\\[1mm]
&\le P\Biggl(c\,\lvert\theta_{0}^{(\ell_{2})}\rvert^{2} - \frac{M}{2nh_{n}}
\le \sup_{\theta\in\Theta_{2}}\Biggl|\frac{1}{nh_{n}}\Bigl(\mathbb{H}_{n}^{2}(\hat{\theta}_{n})-\mathbb{H}_{n}^{2}(\theta)\Bigr) + \mathcal{Y}_{2}(\theta)\Biggr|\Biggr)\\[1mm]
&\le P\Biggl(\frac{1}{2}c\,\lvert\theta_{0}^{(\ell_{2})}\rvert^{2}
\le \sup_{\theta\in\Theta_{2}}\Biggl|\frac{1}{nh_{n}}\Bigl(\mathbb{H}_{n}^{2}(\theta)-\mathbb{H}_{n}^{2}(\hat{\theta}_{n})\Bigr) - \mathcal{Y}_{2}(\theta)\Biggr|\Biggr) + o(1)\\[1mm]
&\to 0 \quad \text{(by Lemma \ref{lem:10}).}
\end{align*}

\qed

\subsection{Proof of Theorem~\ref{aumpi-thm}}

Our model’s true log-likelihood function, excluding constant terms, can be written as
\[
    \mathbb{H}_{n}(\sigma, \theta) = -\frac{1}{2}\bar{X}(\theta)^{\top}S_{n}^{-1}(\sigma)\bar{X}(\theta) -\frac{1}{2}\log\det S_{n} (\sigma).
\]

Define
\[
    \hat{\sigma}_{n}^{\ast}\in \operatorname{argmax}_{\sigma\in\bar{\Theta}_{1}}\mathbb{H}_{n}(\sigma, \theta_{0}), \quad
    \tilde{\sigma}_{n}^{\ast}\in \operatorname{argmax}_{\sigma\in\bar{\Theta}_{0, 1}}\mathbb{H}_{n}(\sigma, \theta_{0}),
\]
\[
    \hat{\theta}_{n}^{\ast}\in \operatorname{argmax}_{\theta\in\bar{\Theta}_{2}}\mathbb{H}_{n}(\sigma_0, \theta), \quad
    \tilde{\theta}_{n}^{\ast}\in \operatorname{argmax}_{\theta\in\bar{\Theta}_{0, 2}}\mathbb{H}_{n}(\sigma_0, \theta),
\]
and set
\[
    \mathrm{LR}_{1} = 2\Bigl(\mathbb{H}_{n}(\hat{\sigma}_{n}^{\ast}, \theta_{0}) - \mathbb{H}_{n}(\tilde{\sigma}_{n}^{\ast}, \theta_{0})\Bigr),
    \quad \mathrm{LR}_{2} = 2\Bigl(\mathbb{H}_{n}(\sigma_0,\hat{\theta}_{n}^{\ast}) - \mathbb{H}_{n}(\sigma_0,\tilde{\theta}_{n}^{\ast})\Bigr).
\]
Then, $\mathrm{LR}_{1}$ and $\mathrm{LR}_{2}$ are the likelihood ratio test statistics.

In the construction of \(T_n^1\) and \(T_n^2\), the quasi-log-likelihood functions \(\mathbb{H}_n^1\) and \(\mathbb{H}_n^2\) for the adaptive estimation of \(\sigma\) and \(\theta\) are employed. The following lemma indicates that the discrepancy between the true likelihood and the quasi-log-likelihood functions \(\mathbb{H}_n^1\) and \(\mathbb{H}_n^2\) is asymptotically negligible.
\begin{lem}\label{lem:ex-1}
Assume conditions (A1)--(A6). For each \(k \in \{0,1,2,3\}\),
\[
    \sup_{\sigma\in\Theta_{1}}\left| \frac{1}{\sqrt{n}}\Bigl(\partial_{\sigma}^{k}\mathbb{H}_{n}(\sigma, \theta_{0}) - \partial_{\sigma}^{k}\mathbb{H}_{n}^{1}(\sigma)\Bigr) \right| \overset{P}{\rightarrow} 0,
\]
\[
    \sup_{\theta\in\Theta_{2}}\left| \frac{1}{\sqrt{nh_n}}\Bigl(\partial_{\theta}^{k}(\mathbb{H}_{n}(\sigma_0, \theta)-\mathbb{H}_{n}(\sigma_0, \theta_0)) - \partial_{\theta}^{k}(\mathbb{H}_{n}^{2}(\theta)-\mathbb{H}_{n}^{2}(\theta_0))\Bigr) \right| \overset{P}{\rightarrow} 0
\]
as $n\to\infty$.
\begin{proof}
Define 
\[
    X_{t}^{c} = \int_{0}^{t} b_{s}(\sigma_{0})\,\mathrm{d}W_{s}.
\]
Since \(\bar{X}(\theta_{0}) = \Delta X^{c}\) and \(\Delta X = \Delta X^{c} + \Delta V(\theta_{0})\), we have
\begin{align*}
    \mathbb{H}_{n}(\sigma, \theta_{0}) - \mathbb{H}_{n}^{1}(\sigma)
    &= -\frac{1}{2}\bar{X}(\theta_{0})^{\top}S_{n}^{-1}(\sigma)\bar{X}(\theta_{0})
    + \frac{1}{2}\Delta X^{\top}S_{n}^{-1}(\sigma)\Delta X\\[1mm]
    &= \Delta V(\theta_{0})^{\top}S_{n}^{-1}(\sigma)\Delta X^{c}
    + \frac{1}{2}\Delta V(\theta_{0})^{\top}S_{n}^{-1}(\sigma)\Delta V(\theta_{0}).
\end{align*}
Then, by the argument in Ogihara~\cite{ogi23} (Lemma 3.6),
\[
    \frac{1}{\sqrt{n}} \sup_{\sigma}\left|\Delta V(\theta_{0})^{\top}S_{n}^{-1}(\sigma)\Delta X^{c}\right| \overset{P}{\rightarrow} 0,
\]
and
\[
    \frac{1}{\sqrt{n}} \sup_{\sigma}\left|\Delta V(\theta_{0})^{\top}S_{n}^{-1}(\sigma)\Delta V(\theta_{0})\right| \overset{P}{\rightarrow} 0.
\]
Hence, the first result for \(k=0\) follows. The same argument applies for \(k\in\{1,2,3\}\).

For the second result, we have
\begin{equation}\label{ulan-eq1}
\mathbb{H}_n(\sigma_0,\theta)-\mathbb{H}_n^2(\theta)+\frac{1}{2}\log\det S_n(\sigma_0) 
=-\frac{1}{2}\bar{X}(\theta)^\top (S_n^{-1}(\sigma_0)-S_n^{-1}(\hat{\sigma}_n))\bar{X}(\theta).
\end{equation}
Let $S_{n,0}=S_n(\sigma_0)$. Then, a simple calculation shows
$$\bar{X}(\theta)^\top S_{n,0}^{-1}\bar{X}(\theta) = \Delta X^{\top} S_{n,0}^{-1} \Delta X 
-2\Delta V(\theta)^\top S_{n,0}^{-1}\Delta X^c - \Delta V(\theta)^\top S_{n,0}^{-1}(2\Delta V(\theta_0)-\Delta V(\theta)).$$
Moreover, (3.43) in Ogihara~\cite{ogi23} yields
$$\bar{X}(\theta)^\top S_n^{-1}(\hat{\sigma}_n)\bar{X}(\theta) = \Delta X^{\top} S_n^{-1}(\hat{\sigma}_n) \Delta X 
-2\Delta V(\theta)^\top S_{n,0}^{-1}\Delta X^c - \Delta V(\theta)^\top S_{n,0}^{-1}(2\Delta V(\theta_0)-\Delta V(\theta))+o_p(\sqrt{nh_n}).$$
Therefore, the right-hand side of (\ref{ulan-eq1}) is equal to
$$\Delta X^{\top}(S_{n,0}^{-1}-S_n^{-1}(\hat{\sigma}_n))\Delta X + o_p(\sqrt{nh_n}),$$
which implies the second result.
\end{proof}
\end{lem}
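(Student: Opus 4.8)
The plan is to handle the two assertions separately, in each case first cancelling the parameter-independent pieces and then showing that the genuinely discrepant remainder is of order $o_p(\sqrt n)$, respectively $o_p(\sqrt{nh_n})$, uniformly in the relevant parameter. For the first assertion I would begin by noting that $\mathbb{H}_n(\sigma,\theta_0)$ and $\mathbb{H}_n^1(\sigma)$ share the identical log-determinant term $-\tfrac12\log\det S_n(\sigma)$, which therefore drops out. Introducing the continuous martingale part $X^c_t=\int_0^t b_s(\sigma_0)\,\dd W_s$, the true increments decompose as $\Delta X=\Delta X^c+\Delta V(\theta_0)$ and $\bar X(\theta_0)=\Delta X^c$; substituting these into the two quadratic forms, the $\Delta X^c$-only contributions cancel and one is left with the purely drift-driven terms $\Delta V(\theta_0)^\top S_n^{-1}(\sigma)\Delta X^c$ and $\tfrac12\Delta V(\theta_0)^\top S_n^{-1}(\sigma)\Delta V(\theta_0)$. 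Since each component of $\Delta V(\theta_0)$ integrates a bounded $\mu_t$ over an interval of length $O(h_n)$ and hence is itself $O(h_n)$, these terms are of strictly lower order than the genuine quadratic form. I would then invoke the estimates of Ogihara~\cite{ogi23} (Lemma 3.6), which supply precisely the uniform-in-$\sigma$ bounds $n^{-1/2}\sup_\sigma|\Delta V(\theta_0)^\top S_n^{-1}(\sigma)\Delta X^c|\overset{P}{\rightarrow}0$ and $n^{-1/2}\sup_\sigma|\Delta V(\theta_0)^\top S_n^{-1}(\sigma)\Delta V(\theta_0)|\overset{P}{\rightarrow}0$ (the cross term crucially exploiting the martingale structure, since a crude Cauchy--Schwarz bound is too weak), settling $k=0$. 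The cases $k\in\{1,2,3\}$ follow verbatim after differentiating $S_n^{-1}(\sigma)$, as the cited bounds hold for the corresponding derivative matrices.

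For the second assertion the obstruction is that $\mathbb{H}_n^2$ evaluates the quadratic form at $S_n^{-1}(\hat\sigma_n)$ whereas $\mathbb{H}_n(\sigma_0,\theta)$ uses $S_n^{-1}(\sigma_0)$, so the discrepancy is not purely a drift effect. Writing $S_{n,0}=S_n(\sigma_0)$, I would expand $\bar X(\theta)^\top S_{n,0}^{-1}\bar X(\theta)$ through $\bar X(\theta)=\Delta X-\Delta V(\theta)$ and $\Delta X=\Delta X^c+\Delta V(\theta_0)$, isolating the $\theta$-independent piece $\Delta X^\top S_{n,0}^{-1}\Delta X$ from the cross term $-2\Delta V(\theta)^\top S_{n,0}^{-1}\Delta X^c$ and the quadratic term $-\Delta V(\theta)^\top S_{n,0}^{-1}(2\Delta V(\theta_0)-\Delta V(\theta))$. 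The key input is (3.43) of Ogihara~\cite{ogi23}, which shows that $\bar X(\theta)^\top S_n^{-1}(\hat\sigma_n)\bar X(\theta)$ carries the same $\theta$-dependent cross and quadratic terms (still at $S_{n,0}^{-1}$) up to a uniform remainder $o_p(\sqrt{nh_n})$. Subtracting, every $\theta$-dependent contribution cancels and the difference of the two quadratic forms reduces to $\Delta X^\top(S_{n,0}^{-1}-S_n^{-1}(\hat\sigma_n))\Delta X+o_p(\sqrt{nh_n})$, which is free of $\theta$. As the log-determinant $-\tfrac12\log\det S_{n,0}$ is also $\theta$-independent, forming the double difference $(\mathbb{H}_n(\sigma_0,\theta)-\mathbb{H}_n(\sigma_0,\theta_0))-(\mathbb{H}_n^2(\theta)-\mathbb{H}_n^2(\theta_0))$ annihilates all $\theta$-independent terms, leaving only the $o_p(\sqrt{nh_n})$ remainder; dividing by $\sqrt{nh_n}$ gives $k=0$, and $k\in\{1,2,3\}$ follow since $\partial_\theta$ acts only on the $\Delta V(\theta)$ factors and (3.43) is available for the derivatives.

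The step I expect to be the main obstacle is exactly the application of (3.43): one must verify that the error from replacing $S_n^{-1}(\hat\sigma_n)$ by $S_{n,0}^{-1}$ in the $\theta$-dependent cross and quadratic terms is $o_p(\sqrt{nh_n})$ \emph{uniformly} over $\theta\in\Theta_2$, not merely pointwise. This requires combining the $\sqrt n$-rate of $\hat\sigma_n$, the block-structure estimates on $S_n^{-1}$, and the $O(h_n)$ smallness of $\Delta V(\theta)$, and it is here that uniformity must be propagated — the role played by the sup-norm (and Sobolev) estimates rather than pointwise limits. Once this uniform bound is secured, all remaining cancellations are purely algebraic.
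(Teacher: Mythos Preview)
Your proposal is correct and follows essentially the same approach as the paper's proof: the same decomposition $\Delta X=\Delta X^c+\Delta V(\theta_0)$ for the first assertion with the appeal to Lemma~3.6 of Ogihara~\cite{ogi23}, and for the second assertion the same expansion of $\bar X(\theta)^\top S_{n,0}^{-1}\bar X(\theta)$ combined with (3.43) of~\cite{ogi23} to reduce the discrepancy to the $\theta$-independent term $\Delta X^\top(S_{n,0}^{-1}-S_n^{-1}(\hat\sigma_n))\Delta X+o_p(\sqrt{nh_n})$, which then cancels in the double difference. Your added caveat about the uniformity of the $o_p(\sqrt{nh_n})$ remainder in~(3.43) is well taken but is indeed handled in the cited reference.
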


\begin{lem}\label{lem:ex-2}
Assume conditions (A1)--(A4) and (A6). Then, for each \(k \in \{0,1,2,3\}\),
\[
    \sup_{\sigma\in\Theta_{1}}\left| \frac{1}{n}\partial_{\sigma}^{k}\Bigl(\mathbb{H}_{n}(\sigma, \theta_{0}) - \mathbb{H}_{n}(\sigma_{0}, \theta_{0})\Bigr) - \partial_{\sigma}^{k}\mathcal{Y}_{1}(\sigma)\right| \overset{P}{\rightarrow} 0,
    \quad \sup_{\theta\in\Theta_{2}}\left| \frac{1}{nh_n}\partial_{\theta}^{k}\Bigl(\mathbb{H}_{n}(\sigma_0, \theta) - \mathbb{H}_{n}(\sigma_{0}, \theta_{0})\Bigr) - \partial_{\theta}^{k}\mathcal{Y}_{2}(\theta)\right| \overset{P}{\rightarrow} 0
\]
as $n\to\infty$.
\begin{proof}
Note that
\begin{align*}
    &\left|\frac{1}{n}\Bigl(\mathbb{H}_{n}(\sigma, \theta_{0}) - \mathbb{H}_{n}(\sigma_{0}, \theta_{0})\Bigr) - \mathcal{Y}_{1}(\sigma)\right|\\[1mm]
    &\quad \leq \left|\frac{1}{n}\Bigl(\mathbb{H}_{n}^{1}(\sigma) - \mathbb{H}_{n}^{1}(\sigma_{0})\Bigr) - \mathcal{Y}_{1}(\sigma)\right|
    + \left|\frac{1}{n}\Bigl(\mathbb{H}_{n}(\sigma, \theta_{0}) - \mathbb{H}_{n}^{1}(\sigma)\Bigr)\right|
    + \left|\frac{1}{n}\Bigl(\mathbb{H}_{n}(\sigma_{0}, \theta_{0}) - \mathbb{H}_{n}^{1}(\sigma_{0})\Bigr)\right|.
\end{align*}
By Ogihara~\cite{ogi23} (Proposition 3.8), the first term converges to zero in probability. In addition, by Lemma \ref{lem:ex-1} the second and third terms also converge to zero, yielding the first result for \(k=0\). The same argument applies for \(k\in\{1,2,3\}\), and the second result.
\end{proof}
\end{lem}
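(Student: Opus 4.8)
The plan is to reduce both claims to statements already established for the adaptive quasi-log-likelihoods $\mathbb{H}_n^1$ and $\mathbb{H}_n^2$, whose normalized centered increments converge uniformly (with derivatives) to $\mathcal{Y}_1$ and $\mathcal{Y}_2$ by Propositions 3.8 and 3.16 of Ogihara~\cite{ogi23}. The only discrepancy between the present statement and those propositions is that here the true likelihood $\mathbb{H}_n(\cdot,\theta_0)$ and $\mathbb{H}_n(\sigma_0,\cdot)$ appear in place of the quasi-likelihoods, and that discrepancy is exactly what Lemma~\ref{lem:ex-1} controls. So the whole proof is a triangle-inequality bridge between the two.

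For the $\sigma$-statement I would, for each fixed $k\in\{0,1,2,3\}$, insert and subtract $\tfrac1n\partial_\sigma^k(\mathbb{H}_n^1(\sigma)-\mathbb{H}_n^1(\sigma_0))$, producing three terms: (i) $|\tfrac1n\partial_\sigma^k(\mathbb{H}_n^1(\sigma)-\mathbb{H}_n^1(\sigma_0))-\partial_\sigma^k\mathcal{Y}_1(\sigma)|$, (ii) $|\tfrac1n\partial_\sigma^k(\mathbb{H}_n(\sigma,\theta_0)-\mathbb{H}_n^1(\sigma))|$, and (iii) the same expression with $\sigma$ replaced by $\sigma_0$. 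Term (i) tends to $0$ uniformly in $\sigma$ by Proposition 3.8. For (ii) and (iii) I would split $\tfrac1n=\tfrac1{\sqrt n}\cdot\tfrac1{\sqrt n}$ and invoke Lemma~\ref{lem:ex-1}, which gives $\tfrac1{\sqrt n}\sup_\sigma|\partial_\sigma^k(\mathbb{H}_n(\sigma,\theta_0)-\mathbb{H}_n^1(\sigma))|\overset{P}{\rightarrow}0$; multiplying an $o_P(1)$ quantity by the deterministic null sequence $n^{-1/2}$ keeps it $o_P(1)$, so both remainders vanish.

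The $\theta$-statement is structurally identical, with $n$ replaced by $nh_n$ and $\sqrt n$ by $\sqrt{nh_n}$: I would subtract and add $\tfrac1{nh_n}\partial_\theta^k(\mathbb{H}_n^2(\theta)-\mathbb{H}_n^2(\theta_0))$, dispatch the main term by Proposition 3.16, and use the second half of Lemma~\ref{lem:ex-1} --- phrased precisely in terms of the centered increments $\mathbb{H}_n(\sigma_0,\theta)-\mathbb{H}_n(\sigma_0,\theta_0)$ and $\mathbb{H}_n^2(\theta)-\mathbb{H}_n^2(\theta_0)$ at scale $(nh_n)^{-1/2}$ --- to annihilate the remainder. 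Since both cited propositions and Lemma~\ref{lem:ex-1} are stated for the derivative orders in question, the same three-term split works verbatim for every $k\in\{0,1,2,3\}$.

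The argument is essentially bookkeeping, and the one place where care is genuinely needed --- and where the real content resides --- is that $\mathcal{Y}_1$ is the limit of the \emph{centered} increment $\tfrac1n(\mathbb{H}_n^1(\sigma)-\mathbb{H}_n^1(\sigma_0))$ rather than of $\tfrac1n\mathbb{H}_n^1(\sigma)$. This forces the comparison to be carried out between differences, so the drift-correction error has to be controlled at both $\sigma$ and $\sigma_0$, which is the origin of terms (ii) and (iii). The genuinely analytic work --- bounding $\Delta V(\theta_0)^\top S_n^{-1}(\sigma)\Delta X^c$ and the quadratic form $\Delta V(\theta_0)^\top S_n^{-1}(\sigma)\Delta V(\theta_0)$ uniformly in $\sigma$ at the sharp rate $\sqrt n$ --- has already been discharged in Lemma~\ref{lem:ex-1} through Ogihara's Lemma 3.6 and equation (3.43), so no fresh probabilistic estimate is required here; the main obstacle is thus purely one of organizing the scales correctly.
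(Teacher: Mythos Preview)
Your proposal is correct and follows essentially the same route as the paper's proof: the same three-term triangle-inequality split, with the main term handled by Proposition~3.8 (resp.\ 3.16) of Ogihara~\cite{ogi23} and the two correction terms by Lemma~\ref{lem:ex-1}. Your explicit remark that $n^{-1}=n^{-1/2}\cdot n^{-1/2}$ (and analogously $(nh_n)^{-1}=(nh_n)^{-1/2}\cdot(nh_n)^{-1/2}$) is the mechanism by which Lemma~\ref{lem:ex-1} applies is a helpful clarification that the paper leaves implicit.
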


Theorem 3 in Hall and Mathiason~\cite{hal-mat90} and Theorem 3 in Choi et al.~\cite{cho-etal96} together with the following uniform LAN properties yield that the tests constructed by ${\rm LR}_1$ and ${\rm LR}_2$ are AUMPI tests.

\begin{prop}\label{ulan-prop}
Assume conditions (A1)--(A6). For $u_1\in\mathbb{R}^{d_1}$ and $u_2\in\mathbb{R}^{d_2}$, define
\[
    \sigma_{n}(u_1)=\sigma_{0}+\Gamma_{1}^{-1/2}\frac{u_1}{\sqrt{n}},
    \quad \theta_n(u_2)=\theta_0+\Gamma_2^{-1/2}\frac{u_2}{\sqrt{nh_n}}.
\]
Then there exists a sequence of $d_{1}$- and $d_2$-dimensional random variables $(\Delta_{n}^{1})_{n\geq 1}$ and $(\Delta_{n}^{2})_{n\geq 1}$, respectively, such that for any $M>0$
\[
    \sup_{|u_1|\leq M}\left|\mathbb{H}_{n}(\sigma_{n}(u_1), \theta_{0}) - \mathbb{H}_{n}(\sigma_{0}, \theta_{0}) - \Bigl(u_1^{\top}\Delta_{n}^{1} - \frac{1}{2}|u_1|^2\Bigr)\right| \overset{P}{\rightarrow} 0,
\]
\[
    \sup_{|u_2|\leq M}\left|\mathbb{H}_{n}(\sigma_0, \theta_{n}(u_2)) - \mathbb{H}_{n}(\sigma_{0}, \theta_{0}) - \Bigl(u_2^{\top}\Delta_{n}^{2} - \frac{1}{2}|u_2|^2\Bigr)\right| \overset{P}{\rightarrow} 0,
\]
and
\[
    \Delta_{n}^{1} \overset{d}{\rightarrow} \mathcal{N}(0, \mathcal{E}_{d_{1}}), 
    \quad \Delta_{n}^{2} \overset{d}{\rightarrow} \mathcal{N}(0, \mathcal{E}_{d_{2}})
\]
as $n\to\infty$.
\begin{proof}
Taylor's theorem yields
\begin{align*}
    \mathbb{H}_{n}(\sigma_{n}(u), \theta_{0}) - \mathbb{H}_{n}(\sigma_{0}, \theta_{0})
    &= u^{\top}\Gamma_{1}^{-1/2}\int_{0}^{1} \partial_{\sigma}\mathbb{H}_{n}(\sigma_{n}(tu), \theta_{0}) \, \mathrm{d}t \\
    &= \frac{1}{\sqrt{n}}u^{\top}\Gamma_{1}^{-1/2}\partial_{\sigma}\mathbb{H}_{n}(\sigma_{0}, \theta_{0}) + \frac{1}{2}u^{\top}\Gamma_{1}^{-1/2}\frac{1}{n}\partial_{\sigma}^{2}\mathbb{H}_{n}(\sigma_{0}, \theta_{0})\Gamma_{1}^{-1/2}u \\
    &\quad +\frac{1}{\sqrt{n}}\sum_{i,j,k}\int_{0}^{1}\frac{(1-s)^{2}}{2}\frac{1}{n}\partial_{\sigma^{(i)}}\partial_{\sigma^{(j)}}\partial_{\sigma^{(k)}} \mathbb{H}_{n}(\sigma_{n}(su), \theta_{0})\,\mathrm{d}s\,[\Gamma_{1}^{-1/2}u]^{(i)}[\Gamma_{1}^{-1/2}u]^{(j)}[\Gamma_{1}^{-1/2}u]^{(k)}.
\end{align*}
By Lemma \ref{lem:ex-2},
\[
    \sup_{\sigma\in\Theta_{1}}\left|
    \frac{1}{n}\partial_{\sigma}^{3}\mathbb{H}_{n}(\sigma, \theta_{0}) - \partial_{\sigma}^{3}\mathcal{Y}_{1}(\sigma)
    \right|
    \overset{P}{\rightarrow}0.
\]
Since $\partial_{\sigma}^{3}\mathcal{Y}_{1}(\sigma)$ is bounded by (A1), it follows that
\[
    \left|\sum_{i,j,k}\int_{0}^{1}\frac{(1-s)^{2}}{2}\frac{1}{n}\partial_{\sigma^{(i)}}\partial_{\sigma^{(j)}}\partial_{\sigma^{(k)}}\mathbb{H}_{n}(\sigma_{n}(su), \theta_{0})\,\mathrm{d}s\right| = O_{p}(1).
\]
For $\|u\|\leq M$, each component of $\Gamma_{1}^{-1/2}u$ is bounded, so that
\[
    \sup_{\|u\|\leq M} \left|\frac{1}{\sqrt{n}}\sum_{i,j,k}\int_{0}^{1}\frac{(1-s)^{2}}{2}\frac{1}{n}\partial_{\sigma^{(i)}}\partial_{\sigma^{(j)}}\partial_{\sigma^{(k)}}\mathbb{H}_{n}(\sigma_{n}(su), \theta_{0})\,\mathrm{d}s\,[\Gamma_{1}^{-1/2}u]^{(i)}[\Gamma_{1}^{-1/2}u]^{(j)}[\Gamma_{1}^{-1/2}u]^{(k)}\right| \overset{P}{\rightarrow} 0.
\]
Moreover, by Lemma \ref{lem:ex-1}, Lemma \ref{lem:ex-2}, and Proposition 3.14 in Ogihara~\cite{ogi23},
\[
    -\frac{1}{n}\partial_{\sigma}^{2}\mathbb{H}_{n}(\sigma_{0}, \theta_{0}) \overset{P}{\rightarrow} \Gamma_{1},
\]
\[
    \frac{1}{\sqrt{n}}\partial_{\sigma}\mathbb{H}_{n}(\sigma_{0}, \theta_{0}) \overset{d}{\rightarrow} \mathcal{N}(0, \Gamma_{1}).
\]
Thus, the results for $\mathbb{H}_{n}(\sigma_{n}(u_1), \theta_{0}) - \mathbb{H}_{n}(\sigma_{0}, \theta_{0})$ follows.
The results for $\mathbb{H}_{n}(\sigma_0, \theta_{n}(u_2)) - \mathbb{H}_{n}(\sigma_{0}, \theta_{0})$ follows by a similar argument with (3.59) in Ogihara~\cite{ogi23}.
\end{proof}
\end{prop}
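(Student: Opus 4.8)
The plan is to derive the uniform quadratic (LAN) expansion from a third-order Taylor development of the true log-likelihood $\mathbb{H}_n$ in each coordinate direction separately, reading off the central sequence from the first-order term and the Fisher information from the second-order term. Since the two displayed statements perturb only one parameter block at a time (with the other held at its true value), I would treat the $\sigma$-expansion and the $\theta$-expansion independently by the same argument, and I spell out the $\sigma$-case.

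First I would expand along the segment $t\mapsto \sigma_n(tu_1)$ to obtain
\[
\mathbb{H}_n(\sigma_n(u_1),\theta_0) - \mathbb{H}_n(\sigma_0,\theta_0)
= u_1^\top \Delta_n^1 + \tfrac12\, u_1^\top \Gamma_1^{-1/2}\Bigl(\tfrac1n \partial_\sigma^2 \mathbb{H}_n(\sigma_0,\theta_0)\Bigr)\Gamma_1^{-1/2} u_1 + R_n,
\]
where $\Delta_n^1 := \Gamma_1^{-1/2} n^{-1/2}\partial_\sigma \mathbb{H}_n(\sigma_0,\theta_0)$ is the natural candidate matching the linear term and $R_n$ is the integrated third-order remainder (depending on $u_1$). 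The three ingredients are then: (i) for the quadratic term, combining Lemmas~\ref{lem:ex-1} and \ref{lem:ex-2} with Proposition~3.14 of Ogihara~\cite{ogi23} to get $-n^{-1}\partial_\sigma^2 \mathbb{H}_n(\sigma_0,\theta_0)\overset{P}{\rightarrow}\Gamma_1$, so that this term tends to $-\tfrac12 u_1^\top \Gamma_1^{-1/2}\Gamma_1\Gamma_1^{-1/2}u_1 = -\tfrac12 |u_1|^2$, uniformly on $\{|u_1|\le M\}$ because the limit is continuous in $u_1$; (ii) for the remainder, noting that $R_n$ carries an extra factor $n^{-1/2}$ multiplying $n^{-1}\partial_\sigma^3 \mathbb{H}_n$, which by Lemma~\ref{lem:ex-2} converges uniformly to the limit $\partial_\sigma^3 \mathcal{Y}_1$, bounded by (A1), whence $\sup_{|u_1|\le M}|R_n|\overset{P}{\rightarrow}0$; and (iii) for the central sequence, transferring $n^{-1/2}\partial_\sigma\mathbb{H}_n^1(\sigma_0)\overset{d}{\rightarrow}\mathcal{N}(0,\Gamma_1)$ to $\mathbb{H}_n$ via Lemma~\ref{lem:ex-1}, so that $\Delta_n^1 \overset{d}{\rightarrow}\mathcal{N}(0,\mathcal{E}_{d_1})$.

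The $\theta$-case is entirely parallel, replacing the normalization $n$ by $nh_n$, the function $\mathcal{Y}_1$ by $\mathcal{Y}_2$, the matrix $\Gamma_1$ by $\Gamma_2$, and the quasi-likelihood $\mathbb{H}_n^1$ by $\mathbb{H}_n^2$; for the asymptotic normality of the score I would invoke (3.59) of Ogihara~\cite{ogi23} in place of Proposition~3.14. I expect the main obstacle to be the passage from the adaptive quasi-likelihoods $\mathbb{H}_n^1,\mathbb{H}_n^2$—for which Ogihara's limit theorems are stated—to the true joint log-likelihood $\mathbb{H}_n$. This is exactly the role of Lemmas~\ref{lem:ex-1} and \ref{lem:ex-2}: the former shows that the derivatives of $\mathbb{H}_n(\cdot,\theta_0)-\mathbb{H}_n^1(\cdot)$ are $o_P(\sqrt n)$ uniformly in $\sigma$ (and analogously $o_P(\sqrt{nh_n})$ in $\theta$), which lets me import both the Gaussian limit of the score and the uniform convergence of the second and third derivatives without re-deriving them. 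A secondary technical point is preserving uniformity in $u$ throughout; this is guaranteed because $\sigma_n(su)$ (resp.\ $\theta_n(su)$) stays in a shrinking neighborhood of the true value for all $s\in[0,1]$ and $|u|\le M$, so the uniform-in-parameter statements of Lemma~\ref{lem:ex-2} apply directly.
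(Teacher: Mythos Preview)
Your proposal is correct and follows essentially the same approach as the paper's own proof: a third-order Taylor expansion of $\mathbb{H}_n$ in $\sigma$ (resp.\ $\theta$), control of the cubic remainder via Lemma~\ref{lem:ex-2} and the boundedness of $\partial_\sigma^3\mathcal{Y}_1$ from (A1), convergence of the Hessian and asymptotic normality of the score via Lemmas~\ref{lem:ex-1}--\ref{lem:ex-2} combined with Proposition~3.14 (resp.\ (3.59)) of Ogihara~\cite{ogi23}, and the same identification $\Delta_n^1=\Gamma_1^{-1/2}n^{-1/2}\partial_\sigma\mathbb{H}_n(\sigma_0,\theta_0)$.
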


Since the test constructed by $\mathrm{LR}_1$ is AUMPI, it suffices to show that $\mathrm{LR}_1 - T_n^1 \to 0$ under $H_0^{(\sigma)}$ and under the local alternatives
in order to establish that $\{\psi_n^1\}_{n=1}^\infty$ is AUMPI. Since the statistical model satisfies local asymptotic normality, by Le Cam's first lemma it is enough to show that $\mathrm{LR}_1 - T_n^1 \to 0$ under $H_0^{(\sigma)}$.
Similarly, it suffices to show that $\mathrm{LR}_2-T_n^2\to 0$ under $H_0^{(\theta)}$
to show that $\{\psi_n^2\}_{n=1}^\infty$ is AUMPI.
For these purposes,
we introduce some auxiliary results related to $\mathbb{H}_n$ and the maximum-likelihood estimators.

\begin{prop}\label{prop:ex-3}
Assume conditions (A1)--(A4) and (A6). Then,
    $\hat{\sigma}_{n}^{*} \overset{P}{\rightarrow} \sigma_{0}$,
and under \(H_0^{(\sigma)}\),
    $\tilde{\sigma}_{n}^{*} \overset{P}{\rightarrow} \sigma_{0}$
    as $n\to\infty$.
\begin{proof}
Following the discussion in Section~3.2 of Ogihara~\cite{ogi23}, for any \(\delta>0\) there exists \(\eta>0\) such that
\[
    \inf_{|\sigma-\sigma_{0}|\geq\delta} \Bigl(-\mathcal{Y}_{1}(\sigma)\Bigr) \geq \eta.
\]
Since, by definition, \(H_{n}(\hat{\sigma}_{n}^{*}, \theta_{0}) - H_{n}(\sigma_{0}, \theta_{0}) \geq 0\), it follows from Lemma \ref{lem:ex-2} that for any \(\epsilon>0\), for sufficiently large \(n\),
\begin{align*}
    P\bigl(\lvert\hat{\sigma}_{n}^{*} - \sigma_{0}\rvert\geq \delta\bigr)
    &\leq P\Biggl(n^{-1}\Bigl(H_{n}(\hat{\sigma}_{n}^{*}, \theta_{0}) - H_{n}(\sigma_{0}, \theta_{0})\Bigr)-\mathcal{Y}_{1}(\hat{\sigma}_{n}^{*})\geq \eta\Biggr)\\[1mm]
    &\leq P\Biggl(\sup_{\lvert\sigma-\sigma_{0}\rvert\geq \delta}\Bigl\{ n^{-1}\Bigl(H_{n}(\sigma, \theta_{0}) - H_{n}(\sigma_{0}, \theta_{0})\Bigr)-\mathcal{Y}_{1}(\sigma)\Bigr\}\geq \eta\Biggr)\\[1mm]
    &< \epsilon.
\end{align*}
Thus, \(\hat{\sigma}_{n}^{*} \overset{P}{\rightarrow} \sigma_{0}\). Under \(H_0^{(\sigma)}\), since \(H_{n}(\tilde{\sigma}_{n}^{*}, \theta_{0}) - H_{n}(\sigma_{0}, \theta_{0}) \geq 0\), an analogous argument shows that \(\tilde{\sigma}_{n}^{*} \overset{P}{\rightarrow} \sigma_{0}\).
\end{proof}
\end{prop}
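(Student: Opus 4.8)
The plan is to treat this as a standard argmax-consistency problem, with $\mathcal{Y}_1$ playing the role of the limiting contrast and $n^{-1}\bigl(\mathbb{H}_n(\cdot,\theta_0)-\mathbb{H}_n(\sigma_0,\theta_0)\bigr)$ its normalized empirical counterpart. Two ingredients suffice: a well-separated maximum of $\mathcal{Y}_1$ at $\sigma_0$, and uniform convergence of the normalized objective to $\mathcal{Y}_1$ over $\Theta_1$.

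First I would record the separation property. Since $\mathcal{Y}_1(\sigma_0)=0$ and, by Proposition~3.9 of Ogihara~\cite{ogi23} combined with (A6) and Remark~4 of Ogihara and Yoshida~\cite{ogi-yos14} (the same bound used in the proof of Theorem~\ref{thm:sigma1}), there is a constant $c>0$ with $\mathcal{Y}_1(\sigma)\le -c\lvert\sigma-\sigma_0\rvert^2$, it follows at once that for every $\delta>0$,
\[
    \inf_{\lvert\sigma-\sigma_0\rvert\ge\delta}\bigl(-\mathcal{Y}_1(\sigma)\bigr)\ge c\delta^2=:\eta>0.
\]
This quadratic bound also obviates any compactness requirement on $\bar{\Theta}_1$, since it forces coercivity directly; (A6) enters the argument only here.

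Next I would invoke the uniform convergence furnished by Lemma~\ref{lem:ex-2},
\[
    \sup_{\sigma\in\Theta_1}\left\lvert n^{-1}\bigl(\mathbb{H}_n(\sigma,\theta_0)-\mathbb{H}_n(\sigma_0,\theta_0)\bigr)-\mathcal{Y}_1(\sigma)\right\rvert\overset{P}{\rightarrow}0.
\]
The substantive content behind this step is that the true likelihood $\mathbb{H}_n$—which carries the drift through $\bar{X}(\theta_0)$—shares the same scaled limit $\mathcal{Y}_1$ as the drift-free quasi-likelihood $\mathbb{H}_n^1$; this is exactly the reduction accomplished by Lemma~\ref{lem:ex-1}, which shows the drift contribution is $o_p(\sqrt{n})$ uniformly in $\sigma$. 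Given these two facts, the conclusion is routine: because $\sigma_0\in\bar{\Theta}_1$ and $\hat{\sigma}_n^{*}$ maximizes $\mathbb{H}_n(\cdot,\theta_0)$ over $\bar{\Theta}_1$, we have $\mathbb{H}_n(\hat{\sigma}_n^{*},\theta_0)-\mathbb{H}_n(\sigma_0,\theta_0)\ge0$, so on $\{\lvert\hat{\sigma}_n^{*}-\sigma_0\rvert\ge\delta\}$ the quantity $n^{-1}\bigl(\mathbb{H}_n(\hat{\sigma}_n^{*},\theta_0)-\mathbb{H}_n(\sigma_0,\theta_0)\bigr)-\mathcal{Y}_1(\hat{\sigma}_n^{*})$ is at least $\eta$; dominating that event by the supremum above drives $P(\lvert\hat{\sigma}_n^{*}-\sigma_0\rvert\ge\delta)$ to $0$. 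For $\tilde{\sigma}_n^{*}$ I would run the identical argument, noting only that under $H_0^{(\sigma)}$ the true value obeys the constraint, so $\sigma_0\in\bar{\Theta}_{0,1}$ and the comparison $\mathbb{H}_n(\tilde{\sigma}_n^{*},\theta_0)\ge\mathbb{H}_n(\sigma_0,\theta_0)$ survives restriction to the smaller feasible set.

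The main obstacle is not the consistency argument, which is the textbook argmax lemma, but the verification that attaching the drift does not alter the limiting contrast—that is, obtaining uniform convergence to $\mathcal{Y}_1$ for the full likelihood rather than for the quasi-likelihood. That reduction rests on Lemmas~\ref{lem:ex-1} and~\ref{lem:ex-2}, and once it is in hand the remainder is purely mechanical. A secondary point worth flagging is the membership $\sigma_0\in\bar{\Theta}_{0,1}$, which is precisely why the constrained assertion is stated under the null hypothesis.
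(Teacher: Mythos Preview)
Your proposal is correct and follows essentially the same argument as the paper: both establish the separation property $\inf_{|\sigma-\sigma_0|\ge\delta}(-\mathcal{Y}_1(\sigma))\ge\eta>0$, invoke Lemma~\ref{lem:ex-2} for uniform convergence, and conclude via the standard argmax comparison using $\mathbb{H}_n(\hat{\sigma}_n^{*},\theta_0)\ge\mathbb{H}_n(\sigma_0,\theta_0)$. The only cosmetic difference is that you derive the separation from the explicit quadratic bound $\mathcal{Y}_1(\sigma)\le -c|\sigma-\sigma_0|^2$, whereas the paper simply cites Section~3.2 of Ogihara~\cite{ogi23} for it.
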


For \(x,y\in\Theta_{1}\) and \(l\in \{0,1\}\), define
\[
    \tilde{\Gamma}_{1, n}^{l,*}(x, y)=
    -(1+l)\int_{0}^{1} \frac{(1-u)^l}{n}\partial^{2}_{\sigma} \mathbb{H}_{n}\Bigl(y + u(x-y), \theta_{0}\Bigr)\,\mathrm{d}u,
\]
and, similarly to \(\Gamma_{1}\), partition it as
\[
    \tilde{\Gamma}_{1, n}^{l,*}(x, y) =
    \begin{pmatrix*}[c]
        \tilde{\Gamma}_{1,1,n}^{l,*} & \tilde{\Gamma}_{1,2,n}^{l,*} \\[1mm]
        (\tilde{\Gamma}_{1,2,n}^{l,*})^{\top} & \tilde{\Gamma}_{1,3,n}^{l,*}
    \end{pmatrix*}(x, y).
\]

\begin{lem}\label{lem:ex-4}
Assume conditions (A1)--(A4) and (A6). Then, under \(H_{0}^{(\sigma)}\),
\[
    \tilde{\Gamma}_{1,n}^{1,*}(\tilde{\sigma}_n^*, \hat{\sigma}_n^*)  \overset{P}{\rightarrow} \Gamma_{1},\quad 
    \tilde{\Gamma}_{1, n}^{0,*}(\tilde{\sigma}_{n}^{*}, \sigma_{0}) \overset{P}{\rightarrow} \Gamma_{1},\quad 
    \tilde{\Gamma}_{1, n}^{0,*}(\hat{\sigma}_{n}^{*}, \sigma_{0}) \overset{P}{\rightarrow} \Gamma_{1},
\]
as \(n\to\infty\).
\begin{proof}
Using Lemma \ref{lem:ex-2} and Proposition \ref{prop:ex-3}, the result can be shown in a manner analogous to that of Lemmas \ref{lem:1} and \ref{lem:2}.
\end{proof}
\end{lem}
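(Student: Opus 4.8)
The plan is to recognize that the three convergences are structurally identical to those in Lemmas~\ref{lem:1} and~\ref{lem:2}, with the quasi-log-likelihood \(\mathbb{H}_n^1\) replaced throughout by the profiled true log-likelihood \(\mathbb{H}_n(\cdot,\theta_0)\). Two ingredients make this substitution legitimate. First, Lemma~\ref{lem:ex-2} with \(k=2\) provides exactly the analog of Proposition~3.8 of Ogihara~\cite{ogi23} that was used in Lemma~\ref{lem:1}, namely the uniform convergence
\[
\sup_{\sigma\in\Theta_1}\left|\frac{1}{n}\partial_\sigma^2 \mathbb{H}_n(\sigma,\theta_0) - \partial_\sigma^2 \mathcal{Y}_1(\sigma)\right| \overset{P}{\rightarrow} 0,
\]
where we have used that \(\mathbb{H}_n(\sigma_0,\theta_0)\) is constant in \(\sigma\) and hence annihilated by \(\partial_\sigma^2\). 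Second, Proposition~\ref{prop:ex-3} supplies the consistency \(\hat{\sigma}_n^* \overset{P}{\rightarrow}\sigma_0\) and, under \(H_0^{(\sigma)}\), \(\tilde{\sigma}_n^* \overset{P}{\rightarrow}\sigma_0\), which replaces the consistency statements of Ogihara~\cite{ogi23}, Section~3.2, invoked in the original proofs.

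First I would treat \(\tilde{\Gamma}_{1,n}^{1,*}(\tilde{\sigma}_n^*,\hat{\sigma}_n^*)\). Setting \(\sigma_{u,n}^* = \hat{\sigma}_n^* + u(\tilde{\sigma}_n^* - \hat{\sigma}_n^*)\), the uniform convergence above yields
\[
\left|\,-2\int_0^1 \frac{1-u}{n}\,\partial_\sigma^2\mathbb{H}_n(\sigma_{u,n}^*,\theta_0)\,\dd u + 2\int_0^1 (1-u)\,\partial_\sigma^2\mathcal{Y}_1(\sigma_{u,n}^*)\,\dd u\,\right|\overset{P}{\rightarrow}0.
\]
Then, using \(-\partial_\sigma^2\mathcal{Y}_1(\sigma_0)=\Gamma_1\) and \(2\int_0^1(1-u)\,\dd u = 1\), a single Taylor expansion of \(\partial_\sigma^2\mathcal{Y}_1\) about \(\sigma_0\) together with the boundedness of \(\partial_\sigma^3\mathcal{Y}_1\) (from (A1), as in \eqref{lem:1-eq3}) and the consistency of both estimators gives
\[
\left|\,2\int_0^1 (1-u)\,\partial_\sigma^2\mathcal{Y}_1(\sigma_{u,n}^*)\,\dd u - \Gamma_1\,\right| \le K'\bigl(|\tilde{\sigma}_n^*-\sigma_0| + |\hat{\sigma}_n^*-\sigma_0|\bigr)\overset{P}{\rightarrow}0,
\]
and combining the two displays establishes the first convergence.

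The remaining two convergences, \(\tilde{\Gamma}_{1,n}^{0,*}(\tilde{\sigma}_n^*,\sigma_0)\overset{P}{\rightarrow}\Gamma_1\) and \(\tilde{\Gamma}_{1,n}^{0,*}(\hat{\sigma}_n^*,\sigma_0)\overset{P}{\rightarrow}\Gamma_1\), follow the same two-step scheme with \(l=0\) (so that the normalizing factor is \(\int_0^1 \dd u = 1\)) along the segments joining \(\sigma_0\) to \(\tilde{\sigma}_n^*\) and to \(\hat{\sigma}_n^*\) respectively; in each case a single endpoint is perturbed from \(\sigma_0\), so the Taylor remainder is controlled by \(K'|\tilde{\sigma}_n^*-\sigma_0|\) (resp.\ \(K'|\hat{\sigma}_n^*-\sigma_0|\)), exactly as in Lemma~\ref{lem:2}. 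Since all the required tools are already in hand, there is no genuine obstacle here: the only points demanding care are that the convergence furnished by Lemma~\ref{lem:ex-2} is \emph{uniform} in \(\sigma\), which is what permits the random integration curves \(\sigma_{u,n}^*\) to be substituted into the estimate, and that the consistency of \(\tilde{\sigma}_n^*\) is asserted only under \(H_0^{(\sigma)}\), matching the hypothesis of the lemma.
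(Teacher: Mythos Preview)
Your proposal is correct and follows exactly the approach sketched in the paper's proof: invoke Lemma~\ref{lem:ex-2} (with \(k=2\)) in place of Proposition~3.8 of Ogihara~\cite{ogi23} and Proposition~\ref{prop:ex-3} in place of the consistency results from Section~3.2 of~\cite{ogi23}, then repeat the computations of Lemmas~\ref{lem:1} and~\ref{lem:2} verbatim. The only slip is a harmless sign in your second display---it should read \(\bigl|-2\int_0^1(1-u)\,\partial_\sigma^2\mathcal{Y}_1(\sigma_{u,n}^*)\,\dd u - \Gamma_1\bigr|\), matching~\eqref{lem:1-eq2}---but your surrounding reasoning and remainder bound are correct as stated.
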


\begin{lem}\label{lem:ex-6}
Assume conditions (A1)--(A4) and (A6). Then, under \(H_{0}^{(\sigma)}\),
\[
    \Bigl|\sqrt{n}\bigl(\tilde{\sigma}_n - \hat{\sigma}_n\bigr) - \sqrt{n}\bigl(\tilde{\sigma}_{n}^{*} - \hat{\sigma}_{n}^{*}\bigr)\Bigr| \overset{P}{\rightarrow} 0.
\]
\begin{proof}
Note that
\begin{align*}
    &\Bigl|\sqrt{n}\bigl(\tilde{\sigma}_n - \hat{\sigma}_n\bigr) - \sqrt{n}\bigl(\tilde{\sigma}_{n}^{*} - \hat{\sigma}_{n}^{*}\bigr)\Bigr|\\[1mm]
    &\quad \leq \Bigl|\sqrt{n}\bigl(\tilde{\sigma}_n - \sigma_{0}\bigr) - \sqrt{n}\bigl(\tilde{\sigma}_{n}^{*} - \sigma_{0}\bigr)\Bigr|
    + \Bigl|\sqrt{n}\bigl(\hat{\sigma}_n - \sigma_{0}\bigr) - \sqrt{n}\bigl(\hat{\sigma}_{n}^{*} - \sigma_{0}\bigr)\Bigr|.
\end{align*}
By Taylor's theorem, we have
\[
    \frac{1}{\sqrt{n}}\partial_{P_{0}\sigma}\mathbb{H}_{n}^{1}(\tilde{\sigma}_{n}) - \frac{1}{\sqrt{n}}\partial_{P_{0}\sigma}\mathbb{H}_{n}^{1}(\sigma_{0}) 
    = -\tilde{\Gamma}_{1,3,n}^0(\tilde{\sigma}_{n}, \sigma_{0})\,\sqrt{n}\,P_{0}\bigl(\tilde{\sigma}_{n} - \sigma_{0}\bigr),
\]
and
\[
    \frac{1}{\sqrt{n}}\partial_{P_{0}\sigma}\mathbb{H}_{n}(\tilde{\sigma}_{n}^{*}, \theta_{0}) - \frac{1}{\sqrt{n}}\partial_{P_{0}\sigma}\mathbb{H}_{n}(\sigma_{0}, \theta_{0}) 
    = -\tilde{\Gamma}_{1,3,n}^{0,*}(\tilde{\sigma}_{n}^{*}, \sigma_{0})\,\sqrt{n}\,P_{0}\bigl(\tilde{\sigma}_{n}^{*} - \sigma_{0}\bigr).
\]
By the definitions of \(\tilde{\sigma}_{n}\) and \(\tilde{\sigma}_{n}^{*}\),
\[
    \frac{1}{\sqrt{n}}\partial_{P_{0}\sigma}\mathbb{H}_{n}^{1}(\sigma_{0}) 
    = \tilde{\Gamma}_{1,3,n}^0(\tilde{\sigma}_{n}, \sigma_{0})\,\sqrt{n}\,P_{0}\bigl(\tilde{\sigma}_{n} - \sigma_{0}\bigr),
\]
\[
    \frac{1}{\sqrt{n}}\partial_{P_{0}\sigma}\mathbb{H}_{n}(\sigma_{0}, \theta_{0}) 
    = \tilde{\Gamma}_{1,3,n}^{0,*}(\tilde{\sigma}_{n}^{*}, \sigma_{0})\,\sqrt{n}\,P_{0}\bigl(\tilde{\sigma}_{n}^{*} - \sigma_{0}\bigr).
\]
Under \(H_{0}^{(\sigma)}\), we have
\[
    \Bigl|\sqrt{n}\bigl(\tilde{\sigma}_{n} - \sigma_{0}\bigr) - \sqrt{n}\,P_{0}\bigl(\tilde{\sigma}_{n} - \sigma_{0}\bigr)\Bigr|
    = \Bigl|\sqrt{n}\,P_{0}\bigl(\tilde{\sigma}_{n} - \sigma_{0}\bigr) - \sqrt{n}\,P_{0}\bigl(\tilde{\sigma}_{n}^{*} - \sigma_{0}\bigr)\Bigr|.
\]
Therefore, by Lemmas \ref{lem:ex-1}, \ref{lem:2}, and \ref{lem:ex-4}, we obtain
\[
    \Bigl|\sqrt{n}\bigl(\tilde{\sigma}_{n} - \sigma_{0}\bigr) - \sqrt{n}\bigl(\tilde{\sigma}_{n}^{*} - \sigma_{0}\bigr)\Bigr| \overset{P}{\rightarrow} 0.
\]
Similarly, since
\[
    \frac{1}{\sqrt{n}}\partial_{\sigma}\mathbb{H}_{n}^{1}(\sigma_{0}) 
    = \tilde{\Gamma}_{1,3,n}^0(\hat{\sigma}_{n}, \sigma_{0})\,\sqrt{n}\bigl(\hat{\sigma}_{n} - \sigma_{0}\bigr),
\]
and
\[
    \frac{1}{\sqrt{n}}\partial_{\sigma}\mathbb{H}_{n}(\sigma_{0}, \theta_{0}) 
    = \tilde{\Gamma}_{1,3,n}^{0,*}(\hat{\sigma}_{n}^{*}, \sigma_{0})\,\sqrt{n}\bigl(\hat{\sigma}_{n}^{*} - \sigma_{0}\bigr),
\]
an analogous argument using Lemmas \ref{lem:ex-1}, \ref{lem:2}, and \ref{lem:ex-4} shows that
\[
    \Bigl|\sqrt{n}\bigl(\hat{\sigma}_{n} - \sigma_{0}\bigr) - \sqrt{n}\bigl(\hat{\sigma}_{n}^{*} - \sigma_{0}\bigr)\Bigr| \overset{P}{\rightarrow} 0.
\]
Thus, the conclusion follows.
\end{proof}
\end{lem}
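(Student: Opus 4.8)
The plan is to reduce the claim, via the triangle inequality
\[
|\sqrt{n}(\tilde{\sigma}_n - \hat{\sigma}_n) - \sqrt{n}(\tilde{\sigma}_n^* - \hat{\sigma}_n^*)|
\le |\sqrt{n}(\tilde{\sigma}_n - \sigma_0) - \sqrt{n}(\tilde{\sigma}_n^* - \sigma_0)|
+ |\sqrt{n}(\hat{\sigma}_n - \sigma_0) - \sqrt{n}(\hat{\sigma}_n^* - \sigma_0)|,
\]
to two separate comparisons, one for the restricted pair $(\tilde{\sigma}_n,\tilde{\sigma}_n^*)$ and one for the unrestricted pair $(\hat{\sigma}_n,\hat{\sigma}_n^*)$, and to show that each term on the right is $o_p(1)$. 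In both cases the mechanism is identical: the estimator is pinned down by a first-order condition, a Taylor expansion of the score about $\sigma_0$ expresses $\sqrt{n}$ times the centred estimator as an averaged-Hessian inverse applied to the scaled score at $\sigma_0$, and the quasi-likelihood $\mathbb{H}_n^1$ and the true likelihood $\mathbb{H}_n(\cdot,\theta_0)$ produce asymptotically equal scores and Hessians, so the two representations collapse onto each other.

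For the restricted term I would use that under $H_0^{(\sigma)}$ the first $r_1$ coordinates of $\tilde{\sigma}_n$, $\tilde{\sigma}_n^*$, and $\sigma_0$ all vanish, so only the free block $P_0\sigma$ matters and the stationarity conditions read $\partial_{P_0\sigma}\mathbb{H}_n^1(\tilde{\sigma}_n)=0$ and $\partial_{P_0\sigma}\mathbb{H}_n(\tilde{\sigma}_n^*,\theta_0)=0$ (valid on the events $\tilde{\sigma}_n,\tilde{\sigma}_n^*\in\Theta_1$, which carry probability tending to one by Proposition~\ref{prop:ex-3} and its quasi-likelihood analogue). Expanding in $P_0\sigma$ yields
\[
\frac{1}{\sqrt{n}}\partial_{P_0\sigma}\mathbb{H}_n^1(\sigma_0) = \tilde{\Gamma}_{1,3,n}^0(\tilde{\sigma}_n,\sigma_0)\,\sqrt{n}\,P_0(\tilde{\sigma}_n-\sigma_0)
\]
together with the analogous starred identity. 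By the $k=1$ case of Lemma~\ref{lem:ex-1} the two scaled scores differ by $o_p(1)$, while Lemmas~\ref{lem:2} and~\ref{lem:ex-4} give $\tilde{\Gamma}_{1,3,n}^0(\tilde{\sigma}_n,\sigma_0),\,\tilde{\Gamma}_{1,3,n}^{0,*}(\tilde{\sigma}_n^*,\sigma_0)\overset{P}{\rightarrow}\Gamma_{1,3}$, which is invertible since $\Gamma_1$ is positive definite. Inverting the asymptotically common, nondegenerate Hessian and subtracting gives $|\sqrt{n}P_0(\tilde{\sigma}_n-\sigma_0)-\sqrt{n}P_0(\tilde{\sigma}_n^*-\sigma_0)|\overset{P}{\rightarrow}0$; as the first $r_1$ coordinates are identically zero, the full difference vanishes as well.

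For the unrestricted term I would proceed identically, but now the complete stationarity conditions $\partial_\sigma\mathbb{H}_n^1(\hat{\sigma}_n)=0$ and $\partial_\sigma\mathbb{H}_n(\hat{\sigma}_n^*,\theta_0)=0$ hold, so I expand the full score and invoke the full-matrix convergences $\tilde{\Gamma}_{1,n}^0(\hat{\sigma}_n,\sigma_0)\overset{P}{\rightarrow}\Gamma_1$ (the obvious $\hat{\sigma}_n$-centred variant of Lemma~\ref{lem:2}) and $\tilde{\Gamma}_{1,n}^{0,*}(\hat{\sigma}_n^*,\sigma_0)\overset{P}{\rightarrow}\Gamma_1$ from Lemma~\ref{lem:ex-4}, again combined with the $k=1$ case of Lemma~\ref{lem:ex-1}, to conclude $|\sqrt{n}(\hat{\sigma}_n-\sigma_0)-\sqrt{n}(\hat{\sigma}_n^*-\sigma_0)|\overset{P}{\rightarrow}0$.

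The step I expect to be delicate is the bookkeeping around invertibility and the matching of the two inverse-Hessian-times-score representations: one must confine the argument to events of probability tending to one on which both averaged Hessians are invertible and uniformly close to $\Gamma_{1,3}$ (respectively $\Gamma_1$), and then verify that the difference of two products of the form (Hessian)$^{-1}\cdot$(score) retains only $o_p(1)$ contributions coming from the mismatch in the scores (controlled by Lemma~\ref{lem:ex-1}) and in the Hessians (controlled by Lemmas~\ref{lem:2} and~\ref{lem:ex-4}). The positive definiteness of $\Gamma_1$, hence of $\Gamma_{1,3}$, is precisely what keeps the inverses bounded and makes this passage to the limit legitimate.
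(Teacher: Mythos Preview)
Your proposal is correct and follows essentially the same route as the paper: the same triangle-inequality split into restricted and unrestricted pairs, the same Taylor expansions of the (restricted or full) score about $\sigma_0$ combined with the stationarity conditions, and the same appeal to Lemmas~\ref{lem:ex-1}, \ref{lem:2}, and~\ref{lem:ex-4} to match the scores and averaged Hessians. Your discussion of the invertibility bookkeeping is, if anything, slightly more explicit than the paper's, and you correctly use the full matrix $\tilde{\Gamma}_{1,n}^0$ in the unrestricted step (the paper's ``$\tilde{\Gamma}_{1,3,n}^0$'' there is a typographical slip).
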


The following theorem completes the proof that \(\{\psi_n^1\}_{n=1}^\infty\) is AUMPI.
\begin{thm}\label{LR1-thm}
Assume conditions (A1)--(A4) and (A6). Then, under \(H_{0}^{(\sigma)}\),
\[
    \mathrm{LR}_{1} - T_{n}^{1} \overset{P}{\rightarrow} 0.
\]
\begin{proof}
From (\ref{eq:4}) and a similar expression for
$\mathbb{H}_{n}(\tilde{\sigma}_{n}^{*},\theta_{0})-\mathbb{H}_{n}(\hat{\sigma}_{n}^{*},\theta_{0})$,
we have
\begin{equation}\label{eq:ex-diff}
    \begin{split}
        \mathrm{LR}_{1} - T_{n}^{1} 
        &= \tilde{\Gamma}_{1,n}^{1}(\tilde{\sigma}_n,\hat{\sigma}_n) \Bigl\llbracket\Bigl[\sqrt{n}\bigl(\tilde{\sigma}_{n} - \hat{\sigma}_{n}\bigr)\Bigr]^{\otimes 2}\Bigr\rrbracket
        - \tilde{\Gamma}_{1,n}^{1,\ast}(\tilde{\sigma}_n^\ast,\hat{\sigma}_n^\ast) \Bigl\llbracket\Bigl[\sqrt{n}\bigl(\tilde{\sigma}_{n}^{*} - \hat{\sigma}_{n}^{*}\bigr)\Bigr]^{\otimes 2}\Bigr\rrbracket \\[1mm]
        &= \Bigl(\tilde{\Gamma}_{1,n}^{1}(\tilde{\sigma}_n,\hat{\sigma}_n) - \tilde{\Gamma}_{1,n}^{1,\ast}(\tilde{\sigma}_n^\ast,\hat{\sigma}_n^\ast)\Bigr)
        \Bigl\llbracket\Bigl[\sqrt{n}\bigl(\tilde{\sigma}_{n} - \hat{\sigma}_{n}\bigr)\Bigr]^{\otimes 2}\Bigr\rrbracket \\[1mm]
        &\quad + \Bigl\{\sqrt{n}\bigl(\tilde{\sigma}_{n} - \hat{\sigma}_{n}\bigr) - \sqrt{n}\bigl(\tilde{\sigma}_{n}^{*} - \hat{\sigma}_{n}^{*}\bigr)\Bigr\}^{\top} \tilde{\Gamma}_{1,n}^{1,\ast}(\tilde{\sigma}_n^\ast,\hat{\sigma}_n^\ast)
        \Bigl\{\sqrt{n}\bigl(\tilde{\sigma}_{n} - \hat{\sigma}_{n}\bigr) + \sqrt{n}\bigl(\tilde{\sigma}_{n}^{*} - \hat{\sigma}_{n}^{*}\bigr)\Bigr\}.
    \end{split}
\end{equation}
By Lemmas \ref{lem:ex-1} and \ref{lem:ex-4}, we have
\[
    \tilde{\Gamma}_{1,n}^1(\tilde{\sigma}_n,\hat{\sigma}_n) \overset{P}{\rightarrow} \Gamma_{1}, \quad \tilde{\Gamma}_{1,n}^{1,\ast}(\tilde{\sigma}_n^\ast,\hat{\sigma}_n^\ast)\overset{P}{\rightarrow} \Gamma_{1}.
\]
Furthermore, by Lemma \ref{lem:ex-4}, under \(H_{0}^{(\sigma)}\) the sequence \(\sqrt{n}(\tilde{\sigma}_{n} - \hat{\sigma}_{n})\) is \(P\)-tight, so the first term in \eqref{eq:ex-diff} converges in probability to zero. Also, by Lemma \ref{lem:ex-6} the sequence \(\sqrt{n}(\tilde{\sigma}_{n}^{*} - \hat{\sigma}_{n}^{*})\) is \(P\)-tight, and a further application of Lemma \ref{lem:ex-6} shows that the second term in \eqref{eq:ex-diff} also converges in probability to zero. Therefore, the conclusion follows.
\end{proof}
\end{thm}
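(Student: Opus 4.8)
The plan is to represent both $T_n^1$ and $\mathrm{LR}_1$ as quadratic forms in the $\sqrt{n}$-scaled gaps between the restricted and unrestricted estimators, with coefficient matrices that are averaged Hessians converging to $\Gamma_1$, and then to show that the two quadratic forms share the same probabilistic limit. The representation for $T_n^1$ is already at hand in \eqref{eq:4}: since $\hat{\sigma}_n$ is consistent and interior, $\partial_\sigma\mathbb{H}_n^1(\hat{\sigma}_n)=0$, and a second-order Taylor expansion of $\mathbb{H}_n^1$ about $\hat{\sigma}_n$ gives $T_n^1 = \tilde{\Gamma}_{1,n}^1(\tilde{\sigma}_n,\hat{\sigma}_n)\llbracket[\sqrt{n}(\tilde{\sigma}_n-\hat{\sigma}_n)]^{\otimes 2}\rrbracket$.

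First I would derive the analogous representation for $\mathrm{LR}_1$. By Proposition~\ref{prop:ex-3}, $\hat{\sigma}_n^*$ is consistent, so $P(\hat{\sigma}_n^*\in\Theta_1)\to 1$, and on this event $\partial_\sigma\mathbb{H}_n(\hat{\sigma}_n^*,\theta_0)=0$; expanding $\mathbb{H}_n(\cdot,\theta_0)$ to second order about $\hat{\sigma}_n^*$ and invoking the definition of $\tilde{\Gamma}_{1,n}^{1,*}$ yields $\mathrm{LR}_1 = \tilde{\Gamma}_{1,n}^{1,*}(\tilde{\sigma}_n^*,\hat{\sigma}_n^*)\llbracket[\sqrt{n}(\tilde{\sigma}_n^*-\hat{\sigma}_n^*)]^{\otimes 2}\rrbracket$. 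Writing $a=\sqrt{n}(\tilde{\sigma}_n^*-\hat{\sigma}_n^*)$, $b=\sqrt{n}(\tilde{\sigma}_n-\hat{\sigma}_n)$, $A=\tilde{\Gamma}_{1,n}^{1,*}$, and $B=\tilde{\Gamma}_{1,n}^1$, the object of interest becomes $\mathrm{LR}_1-T_n^1 = a^\top A a - b^\top B b$.

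Next I would decompose this difference of quadratic forms through the identity
\[
a^\top A a - b^\top B b = b^\top(A-B)b + (a-b)^\top A(a+b),
\]
which uses only the symmetry of $A$. For the first term, Lemma~\ref{lem:1} and Lemma~\ref{lem:ex-4} give $B\overset{P}{\rightarrow}\Gamma_1$ and $A\overset{P}{\rightarrow}\Gamma_1$, so $A-B\overset{P}{\rightarrow}0$; combined with the $P$-tightness of $b$ (from its convergence in distribution in Lemma~\ref{lem:5}, or from \eqref{eq:5} when $r_1=d_1$), this term is $o_p(1)$. For the second term, Lemma~\ref{lem:ex-6} gives $a-b\overset{P}{\rightarrow}0$, while $A=O_p(1)$ and $a+b$ is $P$-tight (both $a$ and $b$ being tight, the former by Lemma~\ref{lem:ex-6}), so the product is again $o_p(1)$. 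Hence $\mathrm{LR}_1-T_n^1\overset{P}{\rightarrow}0$.

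The hard part is the comparison of the scaled estimator gaps $a$ and $b$, i.e.\ Lemma~\ref{lem:ex-6}: $T_n^1$ is built from the adaptive quasi-log-likelihood $\mathbb{H}_n^1$ whereas $\mathrm{LR}_1$ is built from the true likelihood $\mathbb{H}_n$, and one must show that these produce asymptotically identical $\sqrt{n}$-rescaled increments of both the restricted and unrestricted estimators. This rests on Lemma~\ref{lem:ex-1}, which quantifies that the drift-induced discrepancy between $\mathbb{H}_n(\cdot,\theta_0)$ and $\mathbb{H}_n^1$, together with their first three $\sigma$-derivatives, is $o_p(\sqrt{n})$, so that the first-order conditions defining $(\hat{\sigma}_n,\tilde{\sigma}_n)$ and $(\hat{\sigma}_n^*,\tilde{\sigma}_n^*)$ agree up to negligible terms. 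Once that comparison is secured, the remainder of the argument is routine bookkeeping, balancing tight factors against $o_p(1)$ factors in the quadratic-form decomposition.
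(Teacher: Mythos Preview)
Your proposal is correct and follows essentially the same route as the paper: both represent $T_n^1$ and $\mathrm{LR}_1$ as quadratic forms via second-order Taylor expansions about the unrestricted maximizers, then split the difference $a^\top A a - b^\top B b$ into $b^\top(A-B)b + (a-b)^\top A(a+b)$ and dispatch each piece using the convergences $A,B\overset{P}{\to}\Gamma_1$ (Lemmas~\ref{lem:1} and~\ref{lem:ex-4}), tightness of $b$ (Lemma~\ref{lem:5}), and $a-b\overset{P}{\to}0$ (Lemma~\ref{lem:ex-6}). The only cosmetic difference is that the paper writes the decomposition with the roles of the two quadratic forms interchanged (effectively computing $T_n^1-\mathrm{LR}_1$), which is immaterial.
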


\subsubsection*{Proof of Theorem~\ref{aumpi-thm}}

From the discussion preceding Proposition~\ref{prop:ex-3} and Theorem~\ref{LR1-thm}, it follows that the sequence $\{\psi_n^1\}_{n=1}^\infty$ is AUMPI.
We obtain the AUMPI property for $\psi_n^2$ by Lemmas~\ref{lem:ex-1} and~\ref{lem:ex-2}, Proposition 3.16 in Ogihara~\cite{ogi23}, and an argument similarly to the above one.

\qed 

\section*{Acknowledgments}
Teppei Ogihara was supported by Japan Society for the Promotion of Science KAKENHI Grant Number JP21H00997.
In this research work, we used the UTokyo Azure. 

\noindent
\verb|https://utelecon.adm.u-tokyo.ac.jp/en/research_computing/utokyo_azure/| 

\begin{small}
\bibliographystyle{abbrv}
\bibliography{wileyNJD-AMA}
\end{small}

\appendix

\section{The test statistic constructed based on the Hayashi--Yoshida estimator\label{app1}}

In Hayashi and Yoshida~\cite{hay-yos05}, the Hayashi--Yoshida estimator for diffusion processes observed nonsynchronously was introduced:
\[
    \mathrm{HY}_{n} = \sum_{i,j} \Delta_i^1 X\, \Delta_j^2 X\, \mathbf{1}_{\{I_{i}^{1} \cap I_{j}^{2} \neq \emptyset\}}.
\]
In Hayashi and Yoshida~\cite{hay-yos08}, the authors showed that, for a fixed observation interval \([0,T]\) (with \(T>0\)) and as \(\max_{i,l}\lvert I_i^l \rvert\to 0\), one has
\begin{equation}
    \label{eq:HY_normality}
    \sqrt{n}\Bigl(\mathrm{HY}_{n} - \langle X^1,X^2\rangle_T\Bigr)
    \overset{d}{\rightarrow} \mathcal{N}(0, c) \quad (\text{as } n\to\infty),
\end{equation}
where \(c\) is a positive constant. Here, for any interval \(I\), we define
\[
    v(I) = \int_{I}[b_{t}b_{t}^\top(\sigma_0)]_{12}\,\mathrm{d}t, \quad
    v^{l}(I) = \int_{I}[b_{t}b_{t}^\top(\sigma_0)]_{ll}\,\mathrm{d}t \quad (l=1,2).
\]
Furthermore, it is assumed that
\[
    n\Biggl\{\sum_{i,j} v^{1}(I_{i}^{1})v^{2}(I_{j}^{2})\,\mathbf{1}_{\{I_{i}^{1}\cap I_{j}^{2}\neq\emptyset\}}
    + \sum_{i} v(I_{i}^{1})^{2} + \sum_{j} v(I_{j}^{2})^{2}
    - \sum_{i,j} v(I_{i}^{1}\cap I_{j}^{2})^{2}\Biggr\} \overset{P}{\rightarrow} c
\]
as $n\to\infty$.

Define
\[
    \hat{\Lambda}_{1,n} = n\frac{\mathrm{HY}_{n}^{2}}{T^{2}}\Biggl(\sum_{i}\lvert I_{i}^{1} \rvert^{2} + \sum_{j}\lvert I_{j}^{2}\rvert^{2}\Biggr),
\]
\[
    \hat{\Lambda}_{2,n} = -n\frac{\mathrm{HY}_{n}^{2}}{T^{2}}\sum_{i,j}\lvert I_{i}^{1}\cap I_{j}^{2}\rvert^{2},
\]
\[
    \hat{\Lambda}_{3,n} = n\sum_{i,j} (\Delta_{i}^{1}X)^{2} (\Delta_{j}^{2}X)^{2}\,\mathbf{1}_{\{I_{i}^{1}\cap I_{j}^{2}\neq \emptyset\}} + 2\tilde{\Lambda}_{2,n}.
\]
If the diffusion coefficient \(b_{t}\) is time-invariant, then a consistent estimator \(U_n\) of the asymptotic variance \(c\) is constructed by
\[
    U_{n} = \hat{\Lambda}_{1,n} + \hat{\Lambda}_{2,n} + \hat{\Lambda}_{3,n} \overset{P}{\rightarrow} c \quad \text{as } n\to\infty.
\]
In the case where \(b_{t}\) depends on \(t\), a similar estimator can be constructed by approximating \(b_{t}\) by a piecewise constant function.

Consider the hypothesis test concerning the covariation \(\langle X^1,X^2\rangle_T\):
\begin{align*}
    H_0&: \langle X^1,X^2\rangle_T = 0, \\
    H_1&: \text{not } H_0.
\end{align*}
Using the Hayashi--Yoshida estimator \(\mathrm{HY}_{n}\) and the estimator \(U_n\), the test statistic
\[
    V_{n} = \frac{\sqrt{n}\mathrm{HY}_{n}}{\sqrt{U_{n}}}
\]
is formed. Under \(H_{0}\) it is shown that
\[
    V_{n}\overset{d}{\rightarrow}\mathcal{N}(0, 1) \quad (\text{as } n\to\infty),
\]
and under \(H_{1}\), for any $M>0$,
\[
    P(\lvert V_{n} \rvert\leq M)\to 0\quad (\text{as } n\to\infty).
\]

\end{document}